\theoremstyle{plain}
\newtheorem{thm}{\protect\theoremname}
\theoremstyle{definition}
\newtheorem{defn}[thm]{\protect\definitionname}
\theoremstyle{plain}
\newtheorem{cor}[thm]{\protect\corollaryname}
\theoremstyle{remark}
\newtheorem{rem}[thm]{\protect\remarkname}
\theoremstyle{plain}
\newtheorem{lem}[thm]{\protect\lemmaname}
\theoremstyle{plain}
\newtheorem{prop}[thm]{\protect\propositionname}
\theoremstyle{plain}
\newtheorem{conjecture}[thm]{\protect\conjecturename}
\let\pdfoutput=\undefined\fi
\providecommand{\conjecturename}{Conjecture}
\providecommand{\corollaryname}{Corollary}
\providecommand{\definitionname}{Definition}
\providecommand{\lemmaname}{Lemma}
\providecommand{\propositionname}{Proposition}
\providecommand{\remarkname}{Remark}
\providecommand{\theoremname}{Theorem}
\begin{document}

\title{Thinning and Information Projections}

\author{Peter Harremo{\"e}s, \IEEEmembership{Member,~IEEE,} Oliver Johnson,
and Ioannis Kontoyiannis, \IEEEmembership{Member,~IEEE,}%
\thanks{Manuscript received xxx, 2015; revised xxx. This work was supported
by the European Pascal Network of Excellence.%
}%
\thanks{P. Harremo{\"e}s is with Copenhagen Business College, Denmark. O. Johnson
is with Bristol University, United Kingdom. I. Kontoyiannis is with
Athens University of Econ. \& Business, Greece.%
}}
\maketitle
\begin{abstract}
In this paper we establish lower bounds on information divergence
of a distribution on the integers from a Poisson distribution. These
lower bounds are tight and in the cases where a rate of convergence
in the Law of Thin Numbers can be computed the rate is determined
by the lower bounds proved in this paper. General techniques for getting
lower bounds in terms of moments are developed. The results about
lower bound in the Law of Thin Numbers are used to derive similar
results for the Central Limit Theorem.\end{abstract}

\begin{keywords}
Information divergence, Poisson-Charlier polynomials, Poisson distribution, Thinning. 
\end{keywords}

\section{Introduction and preliminaries}

\PARstart{A}{pproximation} by a Poisson distribution is a well
studied subject and a careful presentation can be found in \cite{Barbour1992}.
Connections to information theory have also been established \cite{Harremoes2001c,Kontoyiannis2005}.
For most values of the parameters, the best bounds on total variation
between a binomial distribution and a Poisson distribution with the
same mean have been proved by ideas from information theory via Pinsker's
inequality \cite{Csiszar1967,Fedotov2003,Harremoes2004,Johnson2007b}.

The idea of thinning a random variable was studied in \cite{Harremoes2010c}
and used to formulate and prove a Law of Thin Numbers that is a way
of formulating the Law of Small Numbers (Poisson's Law) so that it
resembles formulation of the Central Limit Theorem for a sequence
of independent identically distributed random variables. Here these
ideas will be developed further. There are three main reasons for
developing these results. The first is to get a lower bound for the
rate of convergence in the Law of Thin Numbers. The second is to use
these to get new inequalities and asymptotic results for the central
limit theorem. The last reason is to develop the general understanding
and techniques related to information divergence and information projection.
We hope eventually to be able to tell which aspects of important theorems
for continuous variables like the Entropy Power Inequality that can
be derived from results for discrete variables and which aspect are
essentially related to continuous variables. The relevance for communication
will not be discussed here, see \cite{Harremoes2010c} for some related
results.
\begin{defn}
Let $P$ denote a distribution on $\mathbb{N}_{0}$. For $\alpha\in\left[0,1\right]$
the $\alpha$\emph{-thinning} of $P$ is the distribution $\alpha\circ P$
given by
\[
\alpha\circ P\left(k\right)=\sum_{\ell=k}^{\infty}P\left(\ell\right)\binom{\ell}{k}\alpha^{k}\left(1-\alpha\right)^{\ell-k}.
\]

\end{defn}
If $X_{1},X_{2},X_{3},...$ are independent identically distributed
Bernoulli random variables with success probability $\alpha$ and
$Y$ has distribution $P$ independent of $X_{1},X_{2},\cdots$ then
$\sum_{n=1}^{Y}X_{n}$ has distribution $\alpha\circ P.$ Obviously
the thinning of an independent sum of random variables is the convolution
of thinnings.

Thinning transforms any natural exponential family on $\mathbb{N}_{0}$
into a natural exponential family on $\mathbb{N}_{0}.$ In particular
the following classes of distributions are conserved under thinning:
binomial distributions, Poisson distributions, geometric distributions,
negative binomial distributions, inverse binomial distributions, and
generalized Poisson distributions. This can be verified by direct
calculations \cite{Harremoes2007b}, but it can also be proved using
the variance function. A distribution on $\mathbb{N}_{0}$ is said
to be ultra log-concave if its density with respect to a Poisson distribution
is discrete log-concave. Thinning also conserves the class of ultra
log-concave distributions \cite{Johnson2007a}.

The thinning operation allow us to state and prove the Law of Thin
Numbers in various versions.
\begin{thm}[\cite{Harremoes2007b,Yu2009,Harremoes2010c} ]
\label{thm:ltnweak} Let $P$ be a distribution on $\mathbb{N}_{0}$
with mean $\lambda.$ If $P^{\ast n}$ denote the $n$-fold convolution
of $P$ then\end{thm}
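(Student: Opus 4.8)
The plan is to reduce the statement to the ordinary Poisson limit theorem (the classical law of small numbers for a triangular array), using the one structural fact recorded just above: thinning turns an independent sum into the convolution of the thinnings, so that
\[
\tfrac1n\circ P^{\ast n}=\bigl(\tfrac1n\circ P\bigr)^{\ast n}.
\]
Thus the quantity of interest is the $n$-fold convolution of the \emph{single} distribution $Q_{n}:=\tfrac1n\circ P$, which has mean $\lambda/n$ and therefore concentrates at $0$; i.e.\ we are in the standard ``rare events'' regime of a triangular array whose $n$th row consists of $n$ i.i.d.\ copies of $Q_{n}$, and the anticipated limit $\mathrm{Po}(\lambda)$ is exactly the usual one.

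I would implement this with probability generating functions. From the identity $\sum_{m=1}^{Y}X_{m}\sim\alpha\circ P$ (with $Y\sim P$ and the $X_{m}$ i.i.d.\ Bernoulli$(\alpha)$), conditioning on $Y$ shows that $\alpha\circ P$ has generating function $s\mapsto G_{P}(1-\alpha+\alpha s)$, where $G_{P}(s)=\sum_{k\ge 0}P(k)s^{k}$. Taking $\alpha=1/n$, the generating function of $\bigl(\tfrac1n\circ P\bigr)^{\ast n}$ is
\[
H_{n}(s)=\Bigl[\,G_{P}\bigl(1-\tfrac{1-s}{n}\bigr)\Bigr]^{n},\qquad s\in[0,1].
\]
Since $P$ has finite mean $\lambda$, monotone convergence applied to the difference quotients $\tfrac{1-(1-t)^{k}}{t}\uparrow k$ gives the one-sided expansion $G_{P}(1-t)=1-\lambda t+o(t)$ as $t\downarrow 0$. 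Hence, for each fixed $s$, $G_{P}\bigl(1-\tfrac{1-s}{n}\bigr)=1-\tfrac{\lambda(1-s)}{n}+o(1/n)$, so $\log H_{n}(s)=n\log G_{P}\bigl(1-\tfrac{1-s}{n}\bigr)\to-\lambda(1-s)$ and $H_{n}(s)\to e^{-\lambda(1-s)}$, the generating function of $\mathrm{Po}(\lambda)$. By the continuity theorem for generating functions this gives $\tfrac1n\circ P^{\ast n}\Rightarrow\mathrm{Po}(\lambda)$; since every distribution here is supported on $\mathbb{N}_{0}$, this is equivalent to $\tfrac1n\circ P^{\ast n}(k)\to e^{-\lambda}\lambda^{k}/k!$ for every fixed $k$.

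If the stated conclusion is the total-variation (rather than merely weak-convergence) form, I would then invoke Scheff\'{e}'s lemma: pointwise convergence of the mass functions to those of a genuine probability distribution forces $\bigl\|\tfrac1n\circ P^{\ast n}-\mathrm{Po}(\lambda)\bigr\|_{\mathrm{TV}}\to 0$. Alternatively, and with an explicit rate, this follows from a Stein--Chen bound for Poisson approximation of a sum of i.i.d.\ $\mathbb{N}_{0}$-valued summands, applied to the $n$ copies of $Q_{n}$.

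The one genuinely delicate point is the expansion $G_{P}(1-t)=1-\lambda t+o(t)$ from nothing more than a finite first moment: the higher derivatives of $G_{P}$ at $1$ need not exist, so one cannot Taylor-expand to second order and must instead control the first-order remainder directly (and, if one wants uniform convergence of $H_{n}$ on $[0,1]$, one uses that this remainder is monotone in $t$). Everything else is soft. Finally, I should note that if the intended conclusion is instead the strong, information-divergence version $D\bigl(\tfrac1n\circ P^{\ast n}\,\|\,\mathrm{Po}(\lambda)\bigr)\to 0$ under the additional hypothesis $D(P\,\|\,\mathrm{Po}(\lambda))<\infty$, the same reduction to convolutions of $Q_{n}$ applies, but Scheff\'{e} no longer suffices; one must then estimate the divergence itself, which is presumably where the Poisson--Charlier expansions and information-projection techniques developed later in the paper enter.
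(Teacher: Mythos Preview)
Your generating-function argument for the pointwise convergence $\tfrac{1}{n}\circ P^{\ast n}(k)\to\mathrm{Po}(\lambda,k)$ is correct, and the care you take with the first-order expansion of $G_{P}$ at $1$ under only a first-moment hypothesis is exactly the right point to flag. However, the paper does not itself prove this theorem: it is quoted as a background result from the cited references \cite{Harremoes2007b,Yu2009,Harremoes2010c}, so there is no in-paper proof to compare your approach against.

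That said, you should be aware that the theorem as stated in the paper has three separate conclusions, and your proposal addresses only the first. Part~2 asserts not merely $D\bigl(\tfrac{1}{n}\circ P^{\ast n}\,\Vert\,\mathrm{Po}(\lambda)\bigr)\to 0$ but also that this sequence is \emph{monotonically decreasing} in $n$; part~3 asserts monotone increase of entropy under an ultra-log-concavity hypothesis. Neither monotonicity statement follows from a weak-convergence argument or from Scheff\'{e}, and neither is accessible via the Poisson--Charlier and information-projection machinery developed later in this paper (which is aimed at lower bounds, not at monotonicity). The monotonicity results genuinely require the techniques of the cited papers---in particular Yu's argument for part~2 and the ultra-log-concavity preservation results for part~3---so your closing remark that ``Scheff\'{e} no longer suffices'' is correct but understates the gap: one needs structural arguments specific to thinning, not just sharper estimates.
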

\begin{enumerate}
\item $\frac{1}{n}\circ P^{\ast n}$ converges point-wise to $\mathrm{{Po}}\left(\lambda\right)$
as $n\rightarrow\infty.$
\item If the divergence $D\left(\frac{1}{n}\circ P^{\ast n}\Vert\mathrm{{Po}}\left(\lambda\right)\right)$
is finite eventually then
\[
D\left(\frac{1}{n}\circ P^{\ast n}\Vert\mathrm{{Po}}\left(\lambda\right)\right)\rightarrow0,\;\;\;\ \text{as }n\rightarrow\infty,
\]
 and the sequence $D\left(\frac{1}{n}\circ P^{\ast n}\Vert\mathrm{{Po}}\left(\lambda\right)\right)$
is monotonically decreasing.
\item If $P$ is an ultra log-concave distribution on $\mathbb{N}_{0}$
then 
\[
H\left(\frac{1}{n}\circ P^{\ast n}\right)\rightarrow H\left(\mathrm{{Po}}\left(\lambda\right)\right),\;\;\;\ \text{as }n\rightarrow\infty.
\]
 Furthermore the sequence $H\left(\frac{1}{n}\circ P^{\ast n}\right)$
is monotonically increasing. 
\end{enumerate}
The focus of this paper is to develop techniques that allow us to
give lower bounds on the rate of convergence in the Law of Thin Numbers.
One of our main results Theorem \ref{nedregraense} has the following
weaker result as corollary.
\begin{thm}
\label{ThmNedreVar}Let $X$ denote a discrete random variable with
$\mathrm{{E}}\left[X\right]=\lambda.$ Then
\[
2\left(D\left(X\Vert\mathrm{{Po}}\left(\lambda\right)\right)\right)^{\nicefrac{1}{2}}\geq1-\frac{\mathrm{{Var}}\left(X\right)}{\lambda}.
\]

\end{thm}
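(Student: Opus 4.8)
The plan is to pass from the divergence to a one--sided exponential--moment estimate for the second Poisson--Charlier polynomial of $\mathrm{Po}\left(\lambda\right)$ via the Gibbs (Donsker--Varadhan) variational bound, and to extract the stated inequality by optimising over the tilting parameter. First I would dispose of the trivial cases: if $\mathrm{Var}\left(X\right)\geq\lambda$ (in particular if $\mathrm{Var}\left(X\right)=\infty$) the right--hand side is $\leq 0\leq$ the left--hand side, and if $D\left(X\Vert\mathrm{Po}\left(\lambda\right)\right)=\infty$ there is nothing to prove. So from now on assume $D:=D\left(X\Vert\mathrm{Po}\left(\lambda\right)\right)<\infty$ and put $\delta:=1-\mathrm{Var}\left(X\right)/\lambda\in\left(0,1\right]$; it is enough to show $D\geq\delta^{2}/4$.

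Write $Q=\mathrm{Po}\left(\lambda\right)$, let $P$ be the law of $X$, and put $C\left(x\right)=\left(x-\lambda\right)^{2}-x$ (a multiple of the second Poisson--Charlier polynomial for $Q$). Using $\mathrm{E}_{Q}\left[X\right]=\lambda$, $\mathrm{E}_{Q}\left[X^{2}\right]=\lambda+\lambda^{2}$ and the elementary Poisson central moments one checks that $\mathrm{E}_{Q}\left[C\right]=0$ and $\mathrm{E}_{Q}\left[C^{2}\right]=2\lambda^{2}$, while $\mathrm{E}_{P}\left[X\right]=\lambda$ gives $\mathrm{E}_{P}\left[C\right]=\mathrm{Var}\left(X\right)-\lambda=-\lambda\delta$. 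Since $C$ is bounded below, $\mathrm{E}_{Q}\left[e^{\theta C}\right]<\infty$ for every $\theta\leq 0$; writing $Q_{\theta}$ for the tilted law with density $e^{\theta C}/\mathrm{E}_{Q}\left[e^{\theta C}\right]$ with respect to $Q$, the inequality $D\left(P\Vert Q_{\theta}\right)\geq 0$ rearranges to
\[
D\;\geq\;\theta\,\mathrm{E}_{P}\left[C\right]-\log\mathrm{E}_{Q}\left[e^{\theta C}\right]\;=\;-\lambda\delta\,\theta-\log\mathrm{E}_{Q}\left[e^{\theta C}\right],\qquad\theta\leq 0.
\]
Granting the ``Gaussian envelope'' $\log\mathrm{E}_{Q}\left[e^{\theta C}\right]\leq\tfrac{1}{2}\mathrm{E}_{Q}\left[C^{2}\right]\theta^{2}=\lambda^{2}\theta^{2}$ for $\theta\leq 0$, the choice $\theta=-\delta/\left(2\lambda\right)$ gives $D\geq\tfrac{\delta^{2}}{2}-\tfrac{\delta^{2}}{4}=\delta^{2}/4$, which is the assertion. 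The constant is sharp: when $P$ differs from $Q$ by a small multiple of $C$ one has $D=\tfrac{1}{2}\chi^{2}\left(P\Vert Q\right)+o\left(\delta^{2}\right)$, which is asymptotically $\delta^{2}/4$.

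The hard part is the envelope estimate $\log\mathrm{E}_{Q}\left[e^{\theta C}\right]\leq\lambda^{2}\theta^{2}$ on $\theta\leq 0$. Its Taylor expansion at $0$ agrees with $\lambda^{2}\theta^{2}$ through second order, the third cumulant of $C$ under $Q$ is positive, and as $\theta\to-\infty$ the left side grows only linearly in $\left|\theta\right|$ (because $C$ is bounded below) against the quadratic right side, so the bound is believable on all of $\theta\leq 0$. The route I would take is to prove that $g\left(\theta\right):=\log\mathrm{E}_{Q}\left[e^{\theta C}\right]-\lambda^{2}\theta^{2}$ is concave on $\left(-\infty,0\right]$; since $g\left(0\right)=g'\left(0\right)=0$, concavity then forces $g\leq 0$ there. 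Concavity of $g$ is precisely the comparison $\mathrm{Var}_{Q_{\theta}}\left(C\right)\leq\mathrm{Var}_{Q}\left(C\right)$ for $\theta\leq 0$; for such $\theta$ the exponential family $Q_{\theta}$ shifts mass towards the minimiser of the convex function $C$, and the right--skewness this induces should keep the tilted variance below its value at $\theta=0$. Establishing this comparison of tilted variances is the technical heart of the argument; alternatively, the full statement follows directly from the moment lower bounds of Theorem~\ref{nedregraense}.
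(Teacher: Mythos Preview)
Your approach coincides with the paper's. Theorem~\ref{ThmNedreVar} is obtained in the paper as the $\mathrm{E}[X]=\lambda$ case of Theorem~\ref{nedregraense}, and Case~1 of the latter's proof is precisely your exponential--tilting argument: the tilted--variance comparison $\mathrm{Var}_{Q_\theta}(C)\leq 2\lambda^{2}$ that you isolate is, after the rescaling $C=\sqrt{2}\,\lambda\,C_{2}^{\lambda}$, exactly the paper's condition $\mathrm{Var}_{Q_\beta}(C_{2}^{\lambda})\leq 1$, which the paper establishes by bounding $Z''(\beta)\leq 1$ on $[\beta_{0},0]$. Your Donsker--Varadhan packaging and the paper's $\hat\beta(t)\leq t$ formulation are two sides of the same inequality.

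Two remarks. First, because $\mathrm{E}[X]=\lambda$ forces $\mathrm{E}[C_{2}^{\lambda}(X)]=(\mathrm{Var}(X)-\lambda)/(\sqrt{2}\,\lambda)\geq -2^{-1/2}>\beta_{0}$, your optimal tilt $\theta=-\delta/(2\lambda)$ always corresponds to $\beta\in[-2^{-1/2},0]\subset[\beta_{0},0]$; so you only need the envelope on this bounded interval, not concavity of $g$ on all of $(-\infty,0]$, and only Case~1 of the paper's argument is relevant here. Second, your ``right--skewness'' heuristic for the variance comparison is not a proof: the paper's actual verification is a concrete case analysis in $\lambda$, using the pointwise bound $x^{2}e^{\beta_{0}x}\leq 1$ for $x\geq\beta_{0}$ together with a separate treatment of the single integer near $\lambda+\tfrac{1}{2}$ where $C_{2}^{\lambda}$ may dip below $\beta_{0}$. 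Since you explicitly fall back on Theorem~\ref{nedregraense}, your proposal is complete and matches the paper's derivation.
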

If $X\sim\mathrm{Bi}\left(n,\nicefrac{\lambda}{n}\right)$, then $\mathrm{{Var}}\left(X\right)=np\left(1-p\right)=\lambda\left(1-\nicefrac{\lambda}{n}\right).$
Hence
\[
D\left(X\Vert\mathrm{{Po}}\left(\lambda\right)\right)\geq\frac{\lambda^{2}}{4n^{2}}.
\]
 For the sequence of binomial distributions $\mathrm{Bi}\left(n,\nicefrac{\lambda}{n}\right)$
the rate of convergence in information is given by
\begin{equation}
n^{2}D\left(\mathrm{Bi}\left(n,\nicefrac{\lambda}{n}\right)\Vert\mathrm{{Po}}\left(\lambda\right)\right)\rightarrow\frac{\lambda^{2}}{4}~\text{for }n\rightarrow\infty,\label{EqBin}
\end{equation}
 which was proved in \cite{Harremoes2004} and \cite{Kontoyiannis2005}. 
\begin{cor}
Let $\mathrm{{Po}}_{\beta}\left(\lambda\right)$ denote the minimum
information distribution from $\mathrm{Po}\left(\lambda\right)$ and
with the same mean and variance as $\mathrm{Bi}\left(n,\lambda/n\right).$
Then
\[
n^{2}D\left(\mathrm{Bi}\left(n,\nicefrac{\lambda}{n}\right)\Vert\mathrm{{Po}}_{\beta}\left(\lambda\right)\right)\rightarrow0\ \text{for }n\rightarrow\infty.
\]
\end{cor}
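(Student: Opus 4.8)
The plan is to combine the Pythagorean identity for information projections with the known rate \eqref{EqBin} and the lower bound of Theorem \ref{ThmNedreVar}. Write $\mathcal{L}_{n}$ for the family of distributions on $\mathbb{N}_{0}$ whose mean equals $\lambda$ and whose variance equals $\lambda(1-\lambda/n)$; this is a linear family, being cut out by constraints on $\mathrm{E}_{P}[X]$ and $\mathrm{E}_{P}[X^{2}]$. By construction $\mathrm{Bi}(n,\lambda/n)\in\mathcal{L}_{n}$, and $\mathrm{Po}_{\beta}(\lambda)$ is exactly the information projection of $\mathrm{Po}(\lambda)$ onto $\mathcal{L}_{n}$. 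Hence, for every $P\in\mathcal{L}_{n}$ with $D(P\Vert\mathrm{Po}(\lambda))<\infty$, the Pythagorean identity gives
\[
D(P\Vert\mathrm{Po}(\lambda))=D(P\Vert\mathrm{Po}_{\beta}(\lambda))+D(\mathrm{Po}_{\beta}(\lambda)\Vert\mathrm{Po}(\lambda)).
\]
Taking $P=\mathrm{Bi}(n,\lambda/n)$ (whose divergence from $\mathrm{Po}(\lambda)$ is plainly finite) and multiplying by $n^{2}$,
\[
n^{2}D(\mathrm{Bi}(n,\lambda/n)\Vert\mathrm{Po}_{\beta}(\lambda))=n^{2}D(\mathrm{Bi}(n,\lambda/n)\Vert\mathrm{Po}(\lambda))-n^{2}D(\mathrm{Po}_{\beta}(\lambda)\Vert\mathrm{Po}(\lambda)),
\]
so it suffices to show that both terms on the right converge to $\lambda^{2}/4$.

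The first term converges to $\lambda^{2}/4$ by \eqref{EqBin}. For the second term I would sandwich it. Since $\mathrm{Po}_{\beta}(\lambda)$ minimises $D(\,\cdot\,\Vert\mathrm{Po}(\lambda))$ over $\mathcal{L}_{n}$, and $\mathrm{Bi}(n,\lambda/n)\in\mathcal{L}_{n}$, we get $D(\mathrm{Po}_{\beta}(\lambda)\Vert\mathrm{Po}(\lambda))\le D(\mathrm{Bi}(n,\lambda/n)\Vert\mathrm{Po}(\lambda))$, hence $\limsup_{n}n^{2}D(\mathrm{Po}_{\beta}(\lambda)\Vert\mathrm{Po}(\lambda))\le\lambda^{2}/4$ by \eqref{EqBin}. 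In the other direction, $\mathrm{Po}_{\beta}(\lambda)$ has mean $\lambda$ and variance $\lambda(1-\lambda/n)$, so Theorem \ref{ThmNedreVar} applied to a random variable with law $\mathrm{Po}_{\beta}(\lambda)$ yields
\[
2(D(\mathrm{Po}_{\beta}(\lambda)\Vert\mathrm{Po}(\lambda)))^{1/2}\ge 1-\frac{\lambda(1-\lambda/n)}{\lambda}=\frac{\lambda}{n},
\]
that is, $n^{2}D(\mathrm{Po}_{\beta}(\lambda)\Vert\mathrm{Po}(\lambda))\ge\lambda^{2}/4$. Combining the two bounds gives $n^{2}D(\mathrm{Po}_{\beta}(\lambda)\Vert\mathrm{Po}(\lambda))\to\lambda^{2}/4$, and therefore $n^{2}D(\mathrm{Bi}(n,\lambda/n)\Vert\mathrm{Po}_{\beta}(\lambda))\to\lambda^{2}/4-\lambda^{2}/4=0$; non-negativity of divergence makes the convergence one-sided, which is automatic.

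The one delicate point is the use of the Pythagorean identity as an exact equality rather than an inequality: this requires that the information projection $\mathrm{Po}_{\beta}(\lambda)$ genuinely lies in $\mathcal{L}_{n}$, i.e. that the mean and variance constraints are active and attained, which is part of the description of $\mathrm{Po}_{\beta}(\lambda)$ set up earlier; granting that, the identity is the standard one for I-projections onto linear families. Everything else is immediate — the rate \eqref{EqBin} is quoted, Theorem \ref{ThmNedreVar} is already available, and the moment-matching used above holds by the very definition of $\mathrm{Po}_{\beta}(\lambda)$. Notably, this argument does not require computing $D(\mathrm{Po}_{\beta}(\lambda)\Vert\mathrm{Po}(\lambda))$ directly via an exponential-family expansion; the two-sided bound pins down its $n^{-2}$ asymptotics for free.
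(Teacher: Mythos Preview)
Your proof is correct and uses the same three ingredients as the paper: the Pythagorean relation, Theorem~\ref{ThmNedreVar} applied to $\mathrm{Po}_{\beta}(\lambda)$, and the rate~\eqref{EqBin}. The paper's execution is marginally leaner: it invokes only the Pythagorean \emph{inequality} $D(\mathrm{Bi}\Vert\mathrm{Po})\ge D(\mathrm{Bi}\Vert\mathrm{Po}_{\beta})+D(\mathrm{Po}_{\beta}\Vert\mathrm{Po})$, combines it directly with $D(\mathrm{Po}_{\beta}\Vert\mathrm{Po})\ge\lambda^{2}/(4n^{2})$ to get $0\le n^{2}D(\mathrm{Bi}\Vert\mathrm{Po}_{\beta})\le n^{2}D(\mathrm{Bi}\Vert\mathrm{Po})-\lambda^{2}/4\to 0$, and so never needs the separate sandwich showing $n^{2}D(\mathrm{Po}_{\beta}\Vert\mathrm{Po})\to\lambda^{2}/4$ nor the exact Pythagorean equality whose justification you correctly flagged as delicate.
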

\begin{rem}
The distribution $\mathrm{{Po}}_{\beta}\left(\lambda\right)$ can
be identified with an element in an exponential family that will be
studied in more detail in Section \ref{SecLower}. \end{rem}
\begin{proof}
According to the Pythagorean Inequality for information divergence
\cite{Csiszar1975} we have 
\begin{align*}
D\left(\mathrm{Bi}\left(n,\nicefrac{\lambda}{n}\right)\Vert\mathrm{{Po}}\left(\lambda\right)\right) & \geq D\left(\mathrm{Bi}\left(n,\nicefrac{\lambda}{n}\right)\Vert\mathrm{{Po}}_{\beta}\left(\lambda\right)\right)\\
 & \qquad+D\left(\mathrm{{Po}}_{\beta}\left(\lambda\right)\Vert\mathrm{{Po}}\left(\lambda\right)\right)\\
 & \geq D\left(\mathrm{Bi}\left(n,\nicefrac{\lambda}{n}\right)\Vert\mathrm{{Po}}_{\beta}\left(\lambda\right)\right)\\
 & \qquad+\frac{\lambda^{2}}{4n^{2}}.
\end{align*}
 Multiplication by $n^{2}$ leads to
\begin{multline*}
n^{2}D\left(\mathrm{Bi}\left(n,\nicefrac{\lambda}{n}\right)\Vert\mathrm{{Po}}\left(\lambda\right)\right)\\
\geq n^{2}D\left(\mathrm{Bi}\left(n,\nicefrac{\lambda}{n}\right)\Vert\mathrm{{Po}}_{\beta}\left(\lambda\right)\right)+\frac{\lambda^{2}}{4}
\end{multline*}
 and
\begin{align*}
0 & \leq n^{2}D\left(\mathrm{Bi}\left(n,\nicefrac{\lambda}{n}\right)\Vert\mathrm{{Po}}_{\beta}\left(\lambda\right)\right)\\
 & \leq n^{2}D\left(\mathrm{Bi}\left(n,\nicefrac{\lambda}{n}\right)\Vert\mathrm{{Po}}\left(\lambda\right)\right)-\frac{\lambda^{2}}{4}.
\end{align*}
 The result follows by the use of (\ref{EqBin}).
\end{proof}
Since the second moment is the sufficient statistic in the exponential
family $\beta\rightarrow\mathrm{{Po}}_{\beta}\left(\lambda\right)$
and $\mathrm{Bi}\left(n,\nicefrac{\lambda}{n}\right)$ asymptotically
is very close to $\mathrm{{Po}}_{\beta}\left(\lambda\right)$ we essentially
prove that calculation of the second moment is asymptotically sufficient
for testing the binomial distribution versus the Poisson distribution. 

Pinsker's inequality can be used to give an upper bound on total variation
when the divergence is given. With a bound on total variation we get
a bound on any bounded function because
\[
\left\vert \int f\,\mathrm{d}P-\int f\,\mathrm{d}Q\right\vert \leq\sup\left\vert f\right\vert \cdot\left\Vert P-Q\right\Vert _{1}.
\]
 If $\int f\,\mathrm{d}Q=0$ we get
\[
-\sup\left\vert f\right\vert \cdot\left\Vert P-Q\right\Vert _{1}\leq\int f\,\mathrm{d}P\leq\sup\left\vert f\right\vert \cdot\left\Vert P-Q\right\Vert _{1}.
\]
 If $f$ is unbounded such bounds cannot be derived so we shall turn
our attention to functions that are lower bounded. For such functions
we have 
\[
\inf f\cdot\left\Vert P-Q\right\Vert _{1}\leq\int f\,\mathrm{d}P.
\]
 When $Q$ is a Poisson distribution and $f$ is a Poisson-Charlier
polynomial then a much more tight bound can be derived and in a number
of cases it will give the correct rate of convergence in the law of
small numbers and related convergence theorems.

The lower bounds that we derive can also be used to qualify the statement
\textquotedbl{}two moments suffice for Poisson approximation\textquotedbl{}
that has been the title of an article \cite{Arratia1989}. In the
article \cite{Arratia1989} used the Chen-Stein method to get bounds
on the rate of convergence in Poisson convergence in the total variation
metric. As pointed out by Kullback and Leibler \cite{Kullback1951}
the notion of sufficiency is closely related to the notion of information
divergence (or Kullback-Leibler divergence). As we shall see knowledge
of the second moment is asymptotically sufficient in the Law of Thin
Numbers. Jaynes developed the Maximum Entropy Principle where entropy
was maximized under some constraints \cite{Jaynes1957}. An obvious
problem about the Maximum Entropy Principle is how to determine which
constraints are relevant for a specific problem. In thermodynamics
experience of generations of physicists has shown that for an isolated
gas in thermodynamic equilibrium the pressure and temperature are
sufficient in the sense that knowing only these two quantities allow
you to determine any other physical property via the maximum entropy
principle. The problem of determining which statistic is relevant,
persists when the Maximum Entropy Principle is replaced by a Minimum
Information Principle relative to some prior distribution. The results
of this paper may be viewed as a systematic approach to the problem
of finding which quantities are sufficient for Poisson approximation.

The rest of the paper is organized as follows. In Section \ref{SecMoment}
Poisson-Charlier polynomials are introduced to simplify moment calculations
for thinned sums of independent random variables. Since one of our
main techniques is based on information projections under moment constraints
we have to address the problem of the existence of information projections
in Section \ref{sec:mininf}. These results may be of independent
interest. In Section \ref{SecLower} we state our main results on
sharp lower bounds on information divergence. Many of our calculations
involve Poisson-Charlier polynomials and are quite lengthy. Proofs
are postponed to the appendix. In some cases we are not able to get
sharp lower bounds but under weak conditions the lower bounds are
still asymptotically correct as stated in Section \ref{sec:lb}. Our
results are related to the Central Limit Theorem and the Gaussian
distribution in Section \ref{sec:disccont}. We end with the conclusion
followed by the appendix containing several of the proofs.

\section{Inequalities in exponential families}

Let $\beta\rightarrow Q_{\beta},\beta\in\Gamma$ denote an exponential
family such that the Radon-Nikodym derivative is 
\[
\frac{\mathrm{d}Q_{\beta}}{\mathrm{d}Q_{0}}=\frac{\exp\left(\beta\cdot x\right)}{Z\left(\beta\right)}
\]
 and where $\Gamma$ is the set of $\beta$ such that the \emph{partition
function} $Z$ is finite, i.e.
\[
Z\left(\beta\right)=\int\exp\left(\beta\cdot x\right)\,\mathrm{d}Q_{0}\left(x\right)<\infty.
\]
 The partition function is also called the \emph{moment generating
function}. The parametrization $\beta\rightarrow Q_{\beta}$ is called
the \emph{natural parametrization}. The mean value of the distribution
$Q_{\beta}$ will be denoted $\mu_{\beta}.$ The distribution with
mean value $\mu$ is denoted $Q^{\mu}$ so that $Q^{\mu_{\beta}}=Q_{\beta}.$
The inverse of the function $\beta\rightarrow\mu_{\beta}$ is denoted
$\hat{\beta}\left(\cdot\right)$ and equals the maximum likelihood
estimate of the natural parameter. The variance of $Q^{\mu}$ is denoted
$V\left(\mu\right)$ so that $\mu\rightarrow V\left(\mu\right)$ is
the \emph{variance function} of the exponential family. This variance
function uniquely characterizes the exponential family.

We note that $\beta\rightarrow\ln Z\left(\beta\right)$ is the \emph{cumulant
generating function} so that
\begin{align*}
\frac{\mathrm{d}}{d\beta}\ln Z\left(\beta\right)_{\mid\beta=0} & =\mathrm{{E}}\left[X\right],\\
\frac{\mathrm{d}^{2}}{\mathrm{d}\beta^{2}}\ln Z\left(\beta\right)_{\mid\beta=0} & =\mathrm{{Var}}\left(X\right),\\
\frac{\mathrm{d}^{3}}{\mathrm{d}\beta^{3}}\ln Z\left(\beta\right)_{\mid\beta=0} & =\mathrm{{E}}\left[\left(X-\mathrm{{E}}\left[X\right]\right)^{3}\right].
\end{align*}

\begin{lem}
\label{PeterG}Let $\beta\rightarrow Q_{\beta},\beta\in\Gamma$ denote
an exponential family with 
\[
\frac{\mathrm{d}Q_{\beta}}{\mathrm{d}Q_{0}}=\frac{\exp\left(\beta\cdot x\right)}{Z\left(\beta\right)}.
\]
 Then\end{lem}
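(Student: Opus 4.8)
The plan is to obtain everything from the explicit Radon--Nikodym derivative and then differentiate. Substituting $\mathrm{d}Q_\beta/\mathrm{d}Q_0=\exp(\beta x)/Z(\beta)$ into $D(P\Vert Q)=\int\ln\bigl(\mathrm{d}P/\mathrm{d}Q\bigr)\,\mathrm{d}P$ and using $\int x\,\mathrm{d}Q_\beta=\mu_\beta$ and $\int x\,\mathrm{d}Q_0=\mu_0$ gives at once
\[
D\left(Q_\beta\Vert Q_0\right)=\beta\mu_\beta-\ln Z\left(\beta\right),\qquad D\left(Q_0\Vert Q_\beta\right)=\ln Z\left(\beta\right)-\beta\mu_0,
\]
and, adding, $D(Q_\beta\Vert Q_0)+D(Q_0\Vert Q_\beta)=\beta(\mu_\beta-\mu_0)$; more generally $D(Q_\beta\Vert Q_{\beta_0})=(\beta-\beta_0)\mu_\beta-\ln Z(\beta)+\ln Z(\beta_0)$. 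These steps need only $\beta\in\Gamma$ together with finiteness of the mean, which is automatic for $\beta$ interior to $\Gamma$.

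Next I would differentiate in $\beta$. The one genuinely technical point is that $\ln Z$ may be differentiated under the integral sign on the interior of $\Gamma$: this is standard, via holomorphic extension of $Z$ to a complex neighbourhood of any interior point, or concretely by dominated convergence using that $\exp\bigl((\beta\pm\varepsilon)x\bigr)$ is $Q_0$-integrable for small $\varepsilon>0$. Granting this, $\ln Z$ has first derivative $\mu_\beta$ and second derivative equal to the variance of $Q_\beta$, namely $V(\mu_\beta)>0$ --- the first two cumulant relations displayed before the lemma, now at a general $\beta$ --- so $\beta\mapsto\mu_\beta$ is strictly increasing with inverse $\hat\beta$ satisfying $\hat\beta'(\mu)=1/V(\mu)$ by the inverse function theorem. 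Differentiating $D(Q_\beta\Vert Q_0)=\beta\mu_\beta-\ln Z(\beta)$ in $\beta$, the $\mu_\beta$ terms cancel exactly as in that cumulant computation, leaving $\tfrac{\mathrm{d}}{\mathrm{d}\beta}D(Q_\beta\Vert Q_0)=\beta\,\tfrac{\mathrm{d}^2}{\mathrm{d}\beta^2}\ln Z(\beta)$; reparametrising by the mean via the chain rule then yields
\[
\frac{\mathrm{d}}{\mathrm{d}\mu}D\left(Q^\mu\Vert Q_0\right)=\hat\beta\left(\mu\right),\qquad\frac{\mathrm{d}^2}{\mathrm{d}\mu^2}D\left(Q^\mu\Vert Q_0\right)=\frac{1}{V\left(\mu\right)}.
\]

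Finally, integrating twice from the base point $\mu_0$, where $D(Q^{\mu_0}\Vert Q_0)$ and its first $\mu$-derivative both vanish, and using Fubini to swap the order in the resulting iterated integral, I get the representation
\[
D\left(Q^\mu\Vert Q_0\right)=\int_{\mu_0}^{\mu}\frac{\mu-t}{V\left(t\right)}\,\mathrm{d}t,
\]
whence $\mu\mapsto D(Q^\mu\Vert Q_0)$ is convex; and if the lemma is stated as a bound rather than an identity, any quadratic estimate drops out by bounding $V$ on the interval between $\mu_0$ and $\mu$ (for example $D(Q^\mu\Vert Q_0)\ge (\mu-\mu_0)^2/(2\sup_t V(t))$, with the reverse inequality using $\inf_t V(t)$). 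I expect the only real obstacle to be justifying the differentiation of $\ln Z$ under the integral, together with the attendant care that $\beta$, equivalently $\mu$, stays interior to the parameter range so that all the moments invoked are finite; everything afterwards is chain rule and Fubini.
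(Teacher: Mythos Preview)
Your derivative computations are exactly the ones the paper uses, but the statement you were given is truncated: the lemma actually has two conclusions, both \emph{mean-value equalities} (Lagrange remainder form), not inequalities. Namely, (1) there exists $\gamma$ between $\alpha$ and $\beta$ with $D(Q_\alpha\Vert Q_\beta)=\tfrac{1}{2}V(\mu_\gamma)(\alpha-\beta)^2$, and (2) there exists $\eta$ between $\mu$ and $\nu$ with $D(Q^\mu\Vert Q^\nu)=(\mu-\nu)^2/(2V(\eta))$.

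For part~2 your work is essentially complete: you have $\tfrac{\mathrm{d}^2}{\mathrm{d}\mu^2}D(Q^\mu\Vert Q^\nu)=1/V(\mu)$ with the function and its first derivative vanishing at $\mu=\nu$, so Taylor's theorem with Lagrange remainder gives the equality directly. Your integral representation $\int_{\nu}^{\mu}(\mu-t)/V(t)\,\mathrm{d}t$ also yields it via the mean value theorem for integrals (the weight $\mu-t$ does not change sign), but you stopped at $\sup$/$\inf$ bounds, which is weaker than what is claimed.

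For part~1 you wrote down the right object, $D(Q_0\Vert Q_\beta)=\ln Z(\beta)-\beta\mu_0$, but then differentiated only the \emph{other} divergence $D(Q_\beta\Vert Q_0)$ in $\beta$. The paper instead fixes the first argument and differentiates $s\mapsto D(Q_\alpha\Vert Q_s)$, obtaining $f'(s)=\mu_s-\mu_\alpha$ and $f''(s)=V(\mu_s)$; since $f(\alpha)=f'(\alpha)=0$, Taylor with Lagrange remainder gives~(1). Your route of differentiating $D(Q_\beta\Vert Q_0)$ in the first argument yields $\beta\,V(\mu_\beta)$, whose second derivative is \emph{not} simply $V(\mu_\beta)$, so it does not lead to part~1 without reworking. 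The fix is one line: differentiate the formula you already have for $D(Q_0\Vert Q_\beta)$ instead.
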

\begin{enumerate}
\item for some $\gamma$ between $\alpha$ and $\beta,$ 
\[
D\left(Q_{\alpha}\Vert Q_{\beta}\right)=\frac{V\left(\mu_{\gamma}\right)}{2}\left(\alpha-\beta\right)^{2}
\]
 and
\item for some $\eta$ between $\mu$ and $\nu.$
\[
D\left(Q^{\mu}\Vert Q^{\nu}\right)=\frac{\left(\mu-\nu\right)^{2}}{2V\left(\eta\right)}.
\]
\end{enumerate}
\begin{proof}
The two parts of the theorem are proved separately.

1. We consider the function 
\begin{align*}
f\left(s\right) & =D\left(Q_{\alpha}\Vert Q_{s}\right)\\
 & =\int\ln\frac{\mathrm{d}Q_{\alpha}}{\mathrm{d}Q_{s}}\,\mathrm{d}Q_{\alpha}\\
 & =\int\ln\frac{\frac{\exp\left(\alpha\cdot x\right)}{Z\left(\alpha\right)}}{\frac{\exp\left(s\cdot x\right)}{Z\left(s\right)}}\,\mathrm{d}Q_{\alpha}\left(x\right)\\
 & =\int\left(\left(\alpha-s\right)\cdot x+\ln Z\left(s\right)-\ln Z\left(\alpha\right)\right)\,\mathrm{d}Q_{\alpha}\\
 & =\left(\alpha-s\right)\cdot\mu_{\alpha}+\ln Z\left(s\right)-\ln Z\left(\alpha\right)
\end{align*}
 The derivatives of this function are
\begin{align*}
f^{\prime}\left(s\right) & =-\mu_{\alpha}+\frac{Z^{\prime}\left(s\right)}{Z\left(s\right)}=\mu_{s}-\mu_{\alpha}~,\\
f^{\prime\prime}\left(s\right) & =\frac{Z\left(s\right)Z^{\prime\prime}\left(s\right)-\left(Z^{\prime}\left(s\right)\right)^{2}}{Z\left(s\right)^{2}}=V\left(\mu_{s}\right).
\end{align*}
 According to Taylor's formula there exists $\gamma$ between $\alpha$
and $\beta$ such that
\begin{align*}
D & \left(Q_{\alpha}\Vert Q_{\beta}\right)\\
 & =f\left(\alpha\right)+\left(\beta-\alpha\right)f^{\prime}\left(\alpha\right)+\frac{1}{2}\left(\beta-\alpha\right)^{2}f^{\prime\prime}\left(\gamma\right)\\
 & =\frac{V\left(\mu_{\gamma}\right)}{2}\left(\beta-\alpha\right)^{2}.
\end{align*}

2. We consider the function 
\begin{align*}
g\left(t\right) & =D\left(Q^{t}\Vert Q^{\nu}\right)\\
 & =\left(\hat{\beta}\left(t\right)-\hat{\beta}\left(\nu\right)\right)\cdot t\\
 & \qquad+\ln Z\left(\hat{\beta}\left(\nu\right)\right)-\ln Z\left(\hat{\beta}\left(t\right)\right).
\end{align*}
 The derivatives of this function are
\begin{align*}
g^{\prime}\left(t\right) & =\frac{\mathrm{d}\hat{\beta}\left(t\right)}{dt}t+\left(\hat{\beta}\left(t\right)-\hat{\beta}\left(\nu\right)\right)\\
 & \qquad-\frac{Z^{\prime}\left(\hat{\beta}\left(t\right)\right)}{Z\left(\hat{\beta}\left(t\right)\right)}\frac{\mathrm{d}\hat{\beta}\left(t\right)}{\mathrm{d}t}\\
 & =\hat{\beta}\left(t\right)-\hat{\beta}\left(\nu\right),\\
g^{\prime\prime}\left(t\right) & =\frac{1}{\mathrm{d}t/d\hat{\beta}\left(t\right)}=\frac{1}{V\left(t\right)}.
\end{align*}
 According to Taylor's formula there exists $\eta$ between $\mu$
and $\nu$ such that
\begin{align*}
D & \left(Q^{\mu}\Vert Q^{\nu}\right)\\
 & =g\left(\nu\right)+\left(\mu-\nu\right)f^{\prime}\left(\nu\right)+\frac{1}{2}\left(\mu-\nu\right)^{2}f^{\prime\prime}\left(\eta\right)\\
 & =\frac{\left(\mu-\nu\right)^{2}}{2V\left(\eta\right)}.
\end{align*}
\end{proof}
\begin{defn}
The signed log-likelihood is defined by
\[
G\left(\mu\right)=\begin{cases}
-\left(2\cdot D\left(\left.Q^{\mu}\right\Vert Q_{0}\right)\right)^{\nicefrac{1}{2}} & \mu\leq\nu,\\
\left(2\cdot D\left(\left.Q^{\mu}\right\Vert Q_{0}\right)\right)^{\nicefrac{1}{2}} & \mu>\nu.
\end{cases}
\]
\end{defn}
\begin{prop}
\label{prop:SignedLogLikelihood}Let $\mu_{0}$denote the mean value
of $Q_{0}$ and let $\mu$ denote some other possible mean value.
Then for some $\eta$ between $\mu$ and $\nu.$
\[
G\left(\mu\right)=\frac{\mu-\mu_{0}}{V\left(\eta\right)^{\nicefrac{1}{2}}}.
\]
\end{prop}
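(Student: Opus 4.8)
The plan is to read off the result directly from part~2 of Lemma~\ref{PeterG}. The first observation is that the base point $Q_0$ of the exponential family is itself the member with mean $\mu_0$, i.e.\ $Q_0=Q^{\mu_0}$, and that the threshold $\nu$ appearing in the definition of $G$ (and in the statement of the proposition) is precisely this mean $\mu_0$. Applying Lemma~\ref{PeterG}(2) with the roles of $\mu$ and $\nu=\mu_0$ then yields, for some $\eta$ between $\mu$ and $\mu_0$,
\[
D\left(Q^{\mu}\Vert Q_0\right)=\frac{\left(\mu-\mu_0\right)^{2}}{2V\left(\eta\right)},
\]
so that $2\,D\left(Q^{\mu}\Vert Q_0\right)=\left(\mu-\mu_0\right)^{2}/V\left(\eta\right)$.

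Next I would take square roots. Since the variance function $V$ is strictly positive on the interior of the mean-value range, $V(\eta)^{1/2}$ is well defined and positive, and
\[
\left(2\,D\left(Q^{\mu}\Vert Q_0\right)\right)^{\nicefrac{1}{2}}=\frac{\left\vert \mu-\mu_0\right\vert }{V\left(\eta\right)^{\nicefrac{1}{2}}}.
\]
Finally I would match this against the two cases in the definition of the signed log-likelihood. If $\mu>\mu_0$ then $\left\vert \mu-\mu_0\right\vert =\mu-\mu_0$, so $G\left(\mu\right)=\left(2\,D\left(Q^{\mu}\Vert Q_0\right)\right)^{\nicefrac{1}{2}}=\left(\mu-\mu_0\right)/V\left(\eta\right)^{\nicefrac{1}{2}}$; if $\mu\leq\mu_0$ then $\left\vert \mu-\mu_0\right\vert =-\left(\mu-\mu_0\right)$, so $G\left(\mu\right)=-\left(2\,D\left(Q^{\mu}\Vert Q_0\right)\right)^{\nicefrac{1}{2}}=\left(\mu-\mu_0\right)/V\left(\eta\right)^{\nicefrac{1}{2}}$ as well. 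In both cases the asserted identity holds with the same $\eta$ produced by Lemma~\ref{PeterG}(2).

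There is essentially no hard step here: the entire analytic content sits in Lemma~\ref{PeterG}(2), and what remains is bookkeeping — identifying $Q_0=Q^{\mu_0}$, invoking positivity of $V$ so the square root makes sense, and checking that the sign convention built into $G$ is exactly what is needed to convert $\left\vert \mu-\mu_0\right\vert $ into $\mu-\mu_0$. The only point requiring a word of care is to note that the intermediate value $\eta$ quantified in the proposition is literally the one returned by the Taylor expansion in the lemma, so the two coincide and no further argument is needed.
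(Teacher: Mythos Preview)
Your proposal is correct and is exactly the argument the paper has in mind: the proposition is stated without proof immediately after Lemma~\ref{PeterG}, and it is meant to be read off from part~2 of that lemma by taking square roots and matching the sign convention in the definition of $G$. There is nothing to add.
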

\begin{lem}
\label{LemmaNedreEpsilon}Let $\beta\rightarrow Q_{\beta},\beta\in\Gamma$
denote an exponential family with 
\[
\frac{\mathrm{d}Q_{\beta}}{\mathrm{d}Q_{0}}=\frac{\exp\left(\beta\cdot x\right)}{Z\left(\beta\right)}.
\]
 If $\mu_{0}=0$ and $V\left(0\right)=1$ and 
\[
\mathrm{{E}}_{Q_{0}}\left[X^{3}\right]>0
\]
then $G\left(\mu\right)\leq\mu$ holds for $\mu$ in a neighborhood of 0. \end{lem}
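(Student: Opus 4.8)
The plan is to read off $G$ from Proposition~\ref{prop:SignedLogLikelihood} and to control the variance function $V$ near $0$ by computing its derivative there. Since $Q_0$ has mean $\mu_0=0$, the reference point $\nu$ in the definition of the signed log-likelihood is $\nu=\mu_0=0$, so Proposition~\ref{prop:SignedLogLikelihood} (or equivalently part~2 of Lemma~\ref{PeterG} together with the sign convention) gives
\[
G\left(\mu\right)=\frac{\mu}{V\left(\eta\right)^{\nicefrac{1}{2}}}
\]
for some $\eta$ lying between $0$ and $\mu$, with $G(0)=0$. Thus the inequality $G(\mu)\le\mu$ will follow once we know that $V(\eta)^{1/2}\ge 1$ when $\mu>0$ and $V(\eta)^{1/2}\le 1$ when $\mu<0$, for the relevant $\eta$.

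Next I would compute $V'(0)$. Differentiating the identities $\mu=\tfrac{\mathrm{d}}{\mathrm{d}\beta}\ln Z(\beta)$ and $V(\mu)=\tfrac{\mathrm{d}^2}{\mathrm{d}\beta^2}\ln Z(\beta)$ with respect to $\beta$ and applying the chain rule yields $V'(\mu)\,V(\mu)=\tfrac{\mathrm{d}^3}{\mathrm{d}\beta^3}\ln Z(\beta)$. Evaluating at $\beta=0$, and using $V(0)=1$, $\mu_0=0$, and the third-cumulant formula $\tfrac{\mathrm{d}^3}{\mathrm{d}\beta^3}\ln Z(\beta)_{\mid\beta=0}=\mathrm{E}_{Q_0}\!\left[(X-\mathrm{E}[X])^3\right]=\mathrm{E}_{Q_0}\!\left[X^3\right]$ recorded before Lemma~\ref{PeterG}, we obtain $V'(0)=\mathrm{E}_{Q_0}\!\left[X^3\right]>0$ by hypothesis. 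Since the variance function of an exponential family is smooth, $V'$ is continuous, so $V'>0$ on some interval $(-\delta,\delta)$; hence $V$ is strictly increasing there, which gives $V(\mu)\ge V(0)=1$ for $\mu\in[0,\delta)$ and $V(\mu)\le 1$ for $\mu\in(-\delta,0]$.

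Finally I would conclude by cases. For $0<\mu<\delta$ the point $\eta$ lies in $(0,\mu)\subset(0,\delta)$, so $V(\eta)^{1/2}\ge 1$ and therefore $G(\mu)=\mu/V(\eta)^{1/2}\le\mu$. For $-\delta<\mu<0$ we have $\eta\in(\mu,0)\subset(-\delta,0)$, so $V(\eta)^{1/2}\le 1$; dividing the negative number $\mu$ by a quantity in $(0,1]$ only makes it smaller (or leaves it unchanged), so again $G(\mu)\le\mu$. The case $\mu=0$ is immediate. Hence $G(\mu)\le\mu$ for all $\mu\in(-\delta,\delta)$, as claimed.

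The argument has no serious obstacle; the only points requiring care are matching the sign of $\eta$ to that of $\mu$ (which is why it is important that $\nu=\mu_0=0$), and correctly handling the reversed inequality in the branch $\mu<0$, where it is the upper bound $V(\eta)\le 1$ that is useful. The evaluation of $V'(0)$ through the cumulant generating function is routine.
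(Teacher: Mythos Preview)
Your proposal is correct and follows essentially the same route as the paper: invoke Proposition~\ref{prop:SignedLogLikelihood} to write $G(\mu)=\mu/V(\eta)^{1/2}$, then use the cumulant identities to show $V'(0)=\mathrm{E}_{Q_0}[X^3]>0$ so that $V$ is increasing near $0$. Your version is in fact slightly more explicit than the paper's in handling the two sign cases $\mu>0$ and $\mu<0$ separately, whereas the paper simply states that monotonicity of $V$ near $0$ suffices; otherwise the arguments coincide.
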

\begin{proof}
From Proposition \ref{prop:SignedLogLikelihood} we know that there
exists $\eta$ between $\mu$ and 0 such that 
\begin{align*}
G\left(\mu\right) & =\frac{\mu-0}{V\left(\eta\right)^{\nicefrac{1}{2}}}.
\end{align*}
 Therefore it is sufficient to prove that $V\left(\eta\right)$ is
increasing in a neighborhood of 0. This follows because 
\begin{align*}
\frac{\mathrm{d}V\left(\eta\right)}{\mathrm{d}\eta} & =\frac{\frac{\mathrm{d}V\left(\eta\right)}{\mathrm{d}\beta}}{\frac{\mathrm{d}\eta}{\mathrm{d}\beta}}\\
 & =\frac{\frac{\mathrm{d}^{3}}{\mathrm{d}\beta^{3}}\ln Z\left(\beta\right)}{\frac{\mathrm{d}^{2}}{\mathrm{d}\beta^{2}}\ln Z\left(\beta\right)}\\
 & =\frac{\mathrm{\mathrm{{E}}}\left[\left(X-\eta\right)^{3}\right]}{\mathrm{Var}\left(X\right)}
\end{align*}
where the mean and variance are taken with respect to the element
in the exponential family with mean $\eta.$ Since $\frac{\mathrm{E}\left[X^{3}\right]}{\mathrm{Var}\left(X\right)}>0$
for $\beta=0$ we have that $\frac{\mathrm{E}\left[\left(X-\eta\right)^{3}\right]}{\mathrm{Var}\left(X\right)}>0$
for $\beta$ in a neighborhood of 0 so $V$ is increasing.
\end{proof}

\section{Moment calculations\label{SecMoment}}

We shall use the notation $x^{\underline{k}}=x\left(x-1\right)\cdots\left(x-k+1\right)$
for the falling factorial. The factorial moments of an $\alpha$-thinning
are easy to calculate
\begin{align}
\mathrm{{E}}\left[\left(\frac{1}{n}\circ Y\right)^{\underline{k}}\right] & =\mathrm{{E}}\left[\left(\sum_{n=1}^{Y}X_{n}\right)^{\underline{k}}\right]\label{Factorial}\\
 & =\mathrm{{E}}\left[\mathrm{{E}}\left[\left.\left(\sum_{n=1}^{Y}X_{n}\right)^{\underline{k}}\right\vert Y\right]\right]\\
 & =\mathrm{{E}}\left[\alpha^{k}Y^{\underline{k}}\right]=\alpha^{k}\mathrm{{E}}\left[Y^{\underline{k}}\right].\nonumber 
\end{align}
 Thus, thinning scales the factorial moments in the same way as ordinary
multiplication scales the ordinary moments.

The binomial distributions, Poisson distributions, geometric distributions,
negative binomial distributions, inverse binomial distributions, and
generalized Poisson distributions are exponential families with at
most cubic variance functions \cite{Morris1982}\cite{Letac1990}.
The thinned family is also exponential.
\begin{thm}
Let $V$ be the variance function of an exponential family with $X\in\mathbb{N}_{0}$
as sufficient statistics and let $V_{\alpha}$ denote the variance
function of the $\alpha$-thinned family. Then the variance functions
$V$ and $V_{a}$ are related by the equation
\[
V_{\alpha}\left(x\right)=\alpha^{2}V\left(\frac{x}{\alpha}\right)-\alpha x+x.
\]
\end{thm}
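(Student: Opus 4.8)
The plan is to pin down the mean and the variance of an arbitrary thinned member as explicit functions of the mean of the corresponding original member, and then eliminate the parameter. Write $Q^{\mu}$ for the element of the original exponential family with mean $\mu$, so that its variance is $V(\mu)$, and let $Y\sim Q^{\mu}$. Using the representation $\alpha\circ Y=\sum_{n=1}^{Y}X_{n}$ with $X_{1},X_{2},\dots$ i.i.d.\ Bernoulli$(\alpha)$ independent of $Y$, the factorial-moment identity~(\ref{Factorial}) with $k=1$ gives $\mathrm{E}[\alpha\circ Y]=\alpha\mu$. Consequently the member of the $\alpha$-thinned family with mean $x$ is exactly $\alpha\circ Q^{x/\alpha}$: the map $\mu\mapsto\alpha\mu$ is a bijection, distinct original means produce thinnings with distinct means, and within an exponential family the element with a prescribed mean is unique.

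The next step is to compute $\mathrm{Var}(\alpha\circ Y)$ by the conditional variance formula, conditioning on $Y$. Since $\sum_{n=1}^{Y}X_{n}\mid Y$ is Binomial$(Y,\alpha)$,
\[
\mathrm{Var}(\alpha\circ Y)=\mathrm{E}\bigl[\alpha(1-\alpha)Y\bigr]+\mathrm{Var}(\alpha Y)=\alpha(1-\alpha)\mu+\alpha^{2}V(\mu).
\]
Substituting $\mu=x/\alpha$ and reading off the variance as a function of the thinned mean $x$ yields
\[
V_{\alpha}(x)=\alpha(1-\alpha)\frac{x}{\alpha}+\alpha^{2}V\!\left(\frac{x}{\alpha}\right)=\alpha^{2}V\!\left(\frac{x}{\alpha}\right)-\alpha x+x,
\]
which is the asserted identity. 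An alternative route to the same formula would start from the probability generating function relation $G_{\alpha\circ Q_{\beta}}(s)=G_{Q_{\beta}}(1-\alpha+\alpha s)$, exhibit the thinned family in natural form, and differentiate its cumulant generating function twice; the computation above via total variance is shorter.

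The one point that genuinely needs the structural input recalled just before the statement is that the collection $\{\alpha\circ Q^{\mu}\}$ is itself a natural exponential family, so that ``its variance function'' is well defined as a function of the mean alone; this is precisely the fact that thinning preserves exponential families, which we are entitled to assume here. Modulo that, the argument is routine: the only other thing to note is that all interchanges of summation (in conditioning on $Y$ and in the factorial-moment identity) involve nonnegative terms and so are harmless. I therefore expect no real obstacle; the content is entirely in the total-variance computation, and the main thing to get right is the bookkeeping of the factor $1/\alpha$ when passing from the original mean $\mu$ to the thinned mean $x=\alpha\mu$.
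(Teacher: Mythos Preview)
Your argument is correct. Both you and the paper reduce the claim to a direct computation of $\mathrm{Var}(\alpha\circ Y)$ in terms of $\mu=\mathrm{E}[Y]$ and $V(\mu)$, followed by the substitution $\mu=x/\alpha$; the only difference is the device used for that computation. The paper writes $\mathrm{Var}(\alpha\circ X)=\mathrm{E}[(\alpha\circ X)^{\underline{2}}]+\mathrm{E}[\alpha\circ X]-\mathrm{E}[\alpha\circ X]^{2}$ and invokes the factorial-moment scaling $\mathrm{E}[(\alpha\circ X)^{\underline{2}}]=\alpha^{2}\mathrm{E}[X^{\underline{2}}]$ established just before, whereas you use the law of total variance via the conditional Binomial$(Y,\alpha)$ structure. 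Your route is marginally more self-contained (it does not need the factorial-moment identity), while the paper's route advertises the usefulness of factorial moments, which is a theme it develops later; neither is longer or harder than the other. Your explicit remark that the thinned family is again exponential, so that $V_{\alpha}$ is well defined as a function of the mean, is a point the paper states before the theorem and then leaves implicit in the proof.
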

\begin{proof}
The variance of a thinned variable can be calculated as 
\begin{align*}
\mathrm{Var}\left(\alpha\circ X\right) & =\mathrm{{E}}\left[\left(\alpha\circ X\right)^{\underline{2}}\right]+\mathrm{{E}}\left[\alpha\circ X\right]-\left(\mathrm{{E}}\left[\alpha\circ X\right]\right)^{2}\\
 & =\alpha^{2}\mathrm{{E}}\left[X^{\underline{2}}\right]+\mathrm{{E}}\left[\alpha\circ X\right]-\left(\mathrm{{E}}\left[\alpha\circ X\right]\right)^{2}\\
 & =\alpha^{2}\left(\mathrm{Var}\left(X\right)-\mathrm{{E}}\left[X\right]+\mathrm{{E}}\left[X\right]^{2}\right)\\
 & \qquad+\mathrm{{E}}\left[\alpha\circ X\right]-\left(\mathrm{{E}}\left[\alpha\circ X\right]\right)^{2}\\
 & =\alpha^{2}V\left(\frac{\mathrm{{E}}\left[\alpha\circ X\right]}{\alpha}\right)\\
 & \qquad-\alpha\mathrm{{E}}\left[\alpha\circ X\right]+\mathrm{{E}}\left[\alpha\circ X\right].
\end{align*}

\end{proof}
For instance the variance function of the Poisson distributions is
$V\left(x\right)=x$ and therefore $V_{\alpha}\left(x\right)=\alpha^{2}V\left(\frac{x}{\alpha}\right)-\alpha x+x=x$
so the thinned family is also Poisson. In general the thinned family
has a variance function that is a polynomial of the same order and
structure. Therefore not only the Poisson family but all the above
mentioned families are conserved under thinning. This kind of variance
function calculations can also be used to verify that if $V$ is the
variance function of the exponential family based on $P$ then the
variance function of the exponential family based on $\frac{1}{n}\circ P^{\ast n}$
is
\[
x\rightarrow\frac{V\left(x\right)-x}{n}+x.
\]
In particular the variance function corresponding to a thinned sum
converges to the variance function of the Poisson distributions. This
observation can be used to give an alternative proof of the law of
thin numbers, but we shall not develop this idea any further in the
present paper.

For moment calculations involving sums of thinned variables we shall
also use the \emph{Poisson-Charlier polynomials} \cite{Chihara1978},
which are given by
\[
C_{k}^{\lambda}\left(x\right)=\left(\lambda^{k}k!\right)^{\nicefrac{-1}{2}}\sum_{\ell=0}^{k}\binom{k}{\ell}\left(-\lambda\right)^{k-\ell}x^{\underline{\ell}}
\]
 where $k\in\mathbb{N}_{0}.$ The Poisson-Charlier polynomials are
characterized as normalized orthogonal polynomials with respect to
the Poisson distribution $\mathrm{{Po}}\left(\lambda\right).$ The
first three Poisson-Charlier polynomials are 
\begin{align*}
C_{0}^{\lambda}\left(x\right) & =1,\\
C_{1}^{\lambda}\left(x\right) & =\frac{x-\lambda}{\lambda^{\nicefrac{1}{2}}},\\
C_{2}^{\lambda}\left(x\right) & =\frac{x^{2}-\left(2\lambda+1\right)x+\lambda^{2}}{2^{\nicefrac{1}{2}}\lambda}.
\end{align*}
 A mean value of a Poisson-Charlier polynomial will be called a\emph{
Poisson-Charlier moment}. First we note that if $\mathrm{{E}}\left[X\right]=\lambda$
the second Poisson-Charlier moment is given by
\[
\mathrm{{E}}\left[C_{2}^{\lambda}\left(x\right)\right]=\frac{\mathrm{{Var}}\left(X\right)-\lambda}{2^{\nicefrac{1}{2}}\lambda}.
\]

Let $\kappa$ denote the first value of $k$ such that $\mathrm{{E}}\left[C_{k}^{\lambda}(X)\right]\neq0\ $
or, equivalently, $\mathrm{{E}}\left[X^{\underline{k}}\right]\neq\lambda^{k}.$
Lower bounds on the rate of convergence in the thin law of large numbers
are essentially given in terms of $\kappa$ and $c=\mathrm{{E}}\left[C_{\kappa}^{\lambda}(X)\right].$
\begin{prop}
\label{PropositionPCDecay}The Poisson-Charlier moments satisfy 
\[
\mathrm{{E}}\left[C_{k}^{\lambda}\left(\frac{1}{n}\circ\sum_{j=1}^{n}X_{j}\right)\right]=\frac{\mathrm{{E}}[C_{k}^{\lambda}(X)]}{n^{k-1}}
\]
 for $k=0,1,\dots,\kappa+1.$ \end{prop}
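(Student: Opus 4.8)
The plan is to repackage the whole computation through the exponential generating function of the Poisson--Charlier polynomials. Write $\tilde{C}_{k}^{\lambda}(x)=\sum_{\ell=0}^{k}\binom{k}{\ell}(-\lambda)^{k-\ell}x^{\underline{\ell}}=(\lambda^{k}k!)^{1/2}C_{k}^{\lambda}(x)$ for the un-normalised polynomials. Expanding $(1+t)^{x}=\sum_{\ell}x^{\underline{\ell}}t^{\ell}/\ell!$ and $e^{-\lambda t}=\sum_{m}(-\lambda t)^{m}/m!$ and collecting powers of $t$ gives at once the classical identity
\[
\sum_{k=0}^{\infty}\tilde{C}_{k}^{\lambda}(x)\frac{t^{k}}{k!}=e^{-\lambda t}(1+t)^{x}.
\]
Taking expectations, for an $\mathbb{N}_{0}$-valued random variable $Y$ with finite mean $\mu$ the generating function of its Poisson--Charlier moments is $\Phi_{Y}(t):=\mathrm{E}[e^{-\mu t}(1+t)^{Y}]=\sum_{k}\mathrm{E}[\tilde{C}_{k}^{\mu}(Y)]t^{k}/k!$. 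Everything below should be read at the level of formal power series (equivalently, as an identity of Taylor polynomials in $t$ of degree $\leq\kappa+1$): only the factorial moments $\mathrm{E}[Y^{\underline{\ell}}]$ with $\ell\leq\kappa+1$ enter, and these are finite under the standing hypothesis that $\mathrm{E}[X^{\kappa+1}]<\infty$, so no convergence question arises. The case $k=0$ is the trivial identity $\mathrm{E}[C_{0}^{\lambda}]\equiv 1$, so I take $k\geq 1$ from now on.

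The argument then rests on two structural properties of $\Phi$. First, thinning acts as a dilation: conditioning on $Y$ and using that $\frac{1}{n}\circ Y$ given $Y=m$ is $\mathrm{Bi}(m,1/n)$ (equivalently, using the factorial-moment identity (\ref{Factorial})) gives $\mathrm{E}[(1+t)^{\frac{1}{n}\circ Y}]=\mathrm{E}[(1+t/n)^{Y}]$, and since $\mathrm{E}[\frac{1}{n}\circ Y]=\mu/n$ this says $\Phi_{\frac{1}{n}\circ Y}(t)=\Phi_{Y}(t/n)$. Second, $\Phi$ is multiplicative over independent sums, $\Phi_{Y_{1}+Y_{2}}(t)=\Phi_{Y_{1}}(t)\Phi_{Y_{2}}(t)$, directly from independence. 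Writing $S_{n}=\sum_{j=1}^{n}X_{j}$ for the sum of $n$ i.i.d.\ copies of $X$ (so $\mathrm{E}[S_{n}]=n\lambda$ and $\mathrm{E}[\frac{1}{n}\circ S_{n}]=\lambda$), these two facts combine to give
\[
\Phi_{\frac{1}{n}\circ S_{n}}(t)=e^{-\lambda t}\,\mathrm{E}\left[(1+t/n)^{S_{n}}\right]=e^{-\lambda t}\left(\mathrm{E}[(1+t/n)^{X}]\right)^{n}=\Phi_{X}(t/n)^{n}.
\]

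It then remains to extract coefficients. By the definition of $\kappa$ one has $\mathrm{E}[\tilde{C}_{1}^{\lambda}(X)]=\mathrm{E}[X]-\lambda=0$ and $\mathrm{E}[\tilde{C}_{k}^{\lambda}(X)]=0$ for $2\leq k\leq\kappa-1$; in particular $\kappa\geq 2$. Hence $\Phi_{X}(s)=1+\sum_{j\geq\kappa}c_{j}s^{j}$ with $c_{j}:=\mathrm{E}[\tilde{C}_{j}^{\lambda}(X)]/j!$, so $\Phi_{X}(t/n)^{n}=\left(1+\sum_{j\geq\kappa}c_{j}(t/n)^{j}\right)^{n}$. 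Expanding by the binomial theorem, the contributions of the $\binom{n}{2}(\cdot)^{2},\binom{n}{3}(\cdot)^{3},\dots$ terms all begin at order $t^{2\kappa}$, and $2\kappa\geq\kappa+2>\kappa+1$ because $\kappa\geq 2$; so for $1\leq k\leq\kappa+1$ only the linear term survives and
\[
[t^{k}]\,\Phi_{\frac{1}{n}\circ S_{n}}(t)=[t^{k}]\left(1+n\sum_{j\geq\kappa}c_{j}(t/n)^{j}\right)=\frac{c_{k}}{n^{k-1}},
\]
where $c_{k}=0$ automatically when $1\leq k\leq\kappa-1$. Multiplying by $k!$ turns the left side into $\mathrm{E}[\tilde{C}_{k}^{\lambda}(\frac{1}{n}\circ S_{n})]$ and the right side into $\mathrm{E}[\tilde{C}_{k}^{\lambda}(X)]/n^{k-1}$, and dividing through by $(\lambda^{k}k!)^{1/2}$ gives the asserted identity for $C_{k}^{\lambda}$.

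The one step that really needs care is this final truncation: one must be sure that every term past the linear one in the binomial expansion of $\Phi_{X}(t/n)^{n}$ is of order $t^{\kappa+2}$ or higher. This is exactly where the otherwise-unused fact $\kappa\geq 2$ --- i.e.\ that the first Poisson--Charlier moment always vanishes --- is used, and it also explains why the clean $n^{-(k-1)}$ scaling cannot be pushed beyond $k=\kappa+1$: already at $k=\kappa+2$ with $\kappa=2$ the term $\binom{n}{2}c_{2}^{2}n^{-4}t^{4}$ contributes, and $\binom{n}{2}n^{-4}$ is not a constant multiple of $n^{-3}$. A secondary, purely routine point is justifying the generating-function manipulations, which is handled by treating every identity as an identity of polynomials in $t$ of degree $\leq\kappa+1$.
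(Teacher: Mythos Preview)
Your proof is correct, and it takes a genuinely different route from the paper. The paper's argument is a direct computation: Lemma~\ref{thm:mombeh} first works out the factorial moments $\mathrm{E}\bigl[(\tfrac{1}{n}\circ S_n)^{\underline{k}}\bigr]$ case by case for $k\le\kappa-1$, $k=\kappa$, and $k=\kappa+1$, using the multinomial Vandermonde identity together with $\mathrm{E}[X^{\underline{j}}]=\lambda^{j}$ for $j<\kappa$; one then substitutes these into the expansion $C_k^{\lambda}(x)=(\lambda^k k!)^{-1/2}\sum_\ell\binom{k}{\ell}(-\lambda)^{k-\ell}x^{\underline{\ell}}$ and simplifies (in particular, for $k=\kappa+1$ the cross term coming from the $\ell=\kappa$ contribution of Lemma~\ref{thm:mombeh} has to be shown to cancel against the $(n-1)$-dependent piece of the $\ell=\kappa+1$ contribution).

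Your generating-function route packages all of this into the single identity $\Phi_{\frac{1}{n}\circ S_n}(t)=\Phi_X(t/n)^{n}$, obtained from the two structural facts ``thinning dilates $\Phi$'' and ``$\Phi$ is multiplicative over independent sums'', after which the result is a one-line coefficient extraction. This is cleaner, handles all $k\le\kappa+1$ uniformly, and makes the role of the hypothesis $\mathrm{E}[X]=\lambda$ (hence $\kappa\ge2$) completely explicit: it is exactly what forces the nonlinear binomial terms in $\Phi_X(t/n)^{n}$ to start at order $t^{2\kappa}\ge t^{\kappa+2}$, and it also shows transparently why the scaling $n^{-(k-1)}$ cannot persist past $k=\kappa+1$. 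The paper's approach has the minor advantage of being self-contained (no generating-function identity for the Poisson--Charlier polynomials needs to be invoked), but at the cost of the case-by-case bookkeeping that your method avoids.
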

\begin{proof}
This follows by straightforward calculations based on Lemma \ref{thm:mombeh}
that can be found in the appendix. 
\end{proof}
For some moment calculations the following result of Khoklov is useful.
\begin{lem}
{[}Khoklov \cite{Khokhlov2002}{]}%
\footnote{The original formula in \cite{Khokhlov2002} contains a typo in that
the factor $\left(-1\right)^{k+l}$ is missing but the proof is correct.%
}\label{Khokhlovform}
\[
C_{k}^{\lambda}\left(x\right)C_{\ell}^{\lambda}\left(x\right)=\left(-1\right)^{k+\ell}\left(\frac{\lambda^{k+\ell}}{k!\ell!}\right)^{\nicefrac{1}{2}}\sum_{m=0}^{k+\ell}c_{m}C_{m}^{\lambda}\left(x\right)
\]
 where $c_{m}$ as a function of $k,\ell$ and $\lambda$ is given
by
\[
\sum_{n=0}^{m}\frac{\sum_{\mu=0}^{k}\sum_{\nu=0}^{\ell}\binom{k}{\mu}\binom{\ell}{\nu}\mu^{\underline{n}}\nu^{\underline{n}}\left(\mu+\nu-n\right)^{\underline{m}}\left(-1\right)^{\mu+\nu}}{\left(m!\lambda^{m}\right)^{\nicefrac{1}{2}}n!\lambda^{n}}.
\]

\end{lem}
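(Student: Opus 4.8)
The plan is to prove the identity by direct expansion, reducing it to two classical facts: a multiplication rule for falling factorials and the expansion of a single falling factorial in the Poisson--Charlier basis. Throughout fix $\lambda>0$; since the $C_{m}^{\lambda}$ form a basis of the polynomial ring, the product $C_{k}^{\lambda}C_{\ell}^{\lambda}$ (a polynomial of degree $k+\ell$) has a unique representation $\sum_{m=0}^{k+\ell}a_{m}C_{m}^{\lambda}$, and all that is at issue is the explicit value of $a_{m}=(-1)^{k+\ell}\bigl(\lambda^{k+\ell}/k!\ell!\bigr)^{\nicefrac{1}{2}}c_{m}$.

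First I would record the two auxiliary identities. The falling-factorial product rule is
\[
x^{\underline{a}}\,x^{\underline{b}}\;=\;\sum_{n\ge0}n!\binom{a}{n}\binom{b}{n}\,x^{\underline{a+b-n}}\;=\;\sum_{n\ge0}\frac{a^{\underline{n}}\,b^{\underline{n}}}{n!}\,x^{\underline{a+b-n}},
\]
which is equivalent, via $\sum_{j}x^{\underline{j}}t^{j}/j!=(1+t)^{x}$, to the factorisation $(1+t)^{x}(1+s)^{x}=(1+t+s+ts)^{x}$ followed by comparison of the coefficients of $t^{a}s^{b}$. The Charlier inversion is
\[
x^{\underline{p}}\;=\;\sum_{m=0}^{p}\binom{p}{m}\lambda^{\,p-m}\bigl(\lambda^{m}m!\bigr)^{\nicefrac{1}{2}}\,C_{m}^{\lambda}(x).
\]
To see it, put $\varphi_{m}(x):=(\lambda^{m}m!)^{\nicefrac{1}{2}}C_{m}^{\lambda}(x)=\sum_{j}\binom{m}{j}(-\lambda)^{m-j}x^{\underline{j}}$ and let $T$ be the linear isomorphism from the polynomials in $x$ onto the polynomials in an auxiliary variable $y$ determined by $T(x^{\underline{j}})=y^{j}$; then $T(\varphi_{m})=(y-\lambda)^{m}$, so $T(x^{\underline{p}})=y^{p}=\bigl((y-\lambda)+\lambda\bigr)^{p}=\sum_{m}\binom{p}{m}\lambda^{p-m}T(\varphi_{m})$, and applying $T^{-1}$ gives the claim.

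Next I would substitute the defining formula for $C_{k}^{\lambda}$ and $C_{\ell}^{\lambda}$ into the product, apply the product rule to each monomial $x^{\underline{\mu}}x^{\underline{\nu}}$ (this introduces the index $n$ together with the factor $\mu^{\underline{n}}\nu^{\underline{n}}/n!$ and the falling factorial $x^{\underline{\mu+\nu-n}}$), and then apply the Charlier inversion to each $x^{\underline{\mu+\nu-n}}$ (introducing the outer index $m$ together with $\binom{\mu+\nu-n}{m}\lambda^{\mu+\nu-n-m}(\lambda^{m}m!)^{\nicefrac{1}{2}}$). Collecting the coefficient of $C_{m}^{\lambda}(x)$ and pulling the common factor $(-1)^{k+\ell}\bigl(\lambda^{k+\ell}/k!\ell!\bigr)^{\nicefrac{1}{2}}$ out front leaves exactly the stated expression for $c_{m}$. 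Three bookkeeping points have to be checked: that the surviving power of $\lambda$ equals $-n-\tfrac{m}{2}$ relative to the common prefactor, i.e.\ that $-\tfrac{k+\ell}{2}+(k+\ell-\mu-\nu)+(\mu+\nu-n-m)+\tfrac{m}{2}=\tfrac{k+\ell}{2}-n-\tfrac{m}{2}$ (note that $\mu$ and $\nu$ cancel); that the factorials $m!,n!$ and the signs combine as written, using $(-1)^{k-\mu+\ell-\nu}=(-1)^{k+\ell}(-1)^{\mu+\nu}$ and $\binom{\mu+\nu-n}{m}=(\mu+\nu-n)^{\underline{m}}/m!$; and that the $n$-summation may be taken over $n\ge0$, since $\mu^{\underline{n}}\nu^{\underline{n}}$ kills the terms with $n>\min(\mu,\nu)$ and $(\mu+\nu-n)^{\underline{m}}$ kills those with $m>\mu+\nu-n$. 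The overall factor $(-1)^{k+\ell}$ is precisely the one absent from the version printed in \cite{Khokhlov2002}, as the footnote records.

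The only real obstacle is this last round of bookkeeping --- three summation indices, two binomial coefficients, three square-root normalisations and a sign --- but it is entirely mechanical once the two auxiliary identities are in place; a single explicit check, e.g.\ $C_{1}^{\lambda}C_{1}^{\lambda}=C_{0}^{\lambda}+\lambda^{\nicefrac{-1}{2}}C_{1}^{\lambda}+\sqrt{2}\,C_{2}^{\lambda}$, confirms the normalisations. An essentially equivalent alternative is to use orthonormality of $\{C_{m}^{\lambda}\}$ with respect to $\mathrm{Po}(\lambda)$, so that $a_{m}=\mathrm{E}_{\mathrm{Po}(\lambda)}\bigl[C_{k}^{\lambda}(X)C_{\ell}^{\lambda}(X)C_{m}^{\lambda}(X)\bigr]$; expanding all three factors in falling factorials and applying the product rule twice together with $\mathrm{E}_{\mathrm{Po}(\lambda)}[X^{\underline{p}}]=\lambda^{p}$ yields the same expression, at the cost of a heavier sum at the outset. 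I would use the direct expansion above, since it is both self-contained and closest in form to the statement.
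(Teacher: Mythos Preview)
The paper does not give its own proof of this lemma: it is quoted from Khokhlov \cite{Khokhlov2002} and used as a black box. So there is nothing to compare against, and your direct argument --- expand both Charlier polynomials in falling factorials, multiply via the connection formula $x^{\underline{a}}x^{\underline{b}}=\sum_{n}\frac{a^{\underline{n}}b^{\underline{n}}}{n!}x^{\underline{a+b-n}}$, and re-expand each $x^{\underline{p}}$ in the Charlier basis --- is a clean self-contained derivation. The two auxiliary identities are correctly stated and proved, and the bookkeeping of powers of $\lambda$, factorials and signs is right; the explicit check $C_{1}^{\lambda}C_{1}^{\lambda}=C_{0}^{\lambda}+\lambda^{-1/2}C_{1}^{\lambda}+\sqrt{2}\,C_{2}^{\lambda}$ confirms the normalisations.

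One point you glide over deserves an explicit remark. Your derivation produces the $n$-sum over all $n\ge 0$ (equivalently $0\le n\le\min(k,\ell)$), whereas the printed statement truncates at $n=m$. These are \emph{not} the same: already for $k=\ell=1$ and $m=0$ the only nonzero contribution comes from $n=1$, so with the upper limit $m$ one gets $c_{0}=0$ instead of the correct $c_{0}=1/\lambda$. In other words your proof is right and the displayed upper limit in the lemma is another typo of the same kind the footnote flags for the sign $(-1)^{k+\ell}$. It would strengthen your write-up to say this explicitly rather than leave the reader to reconcile your ``$n\ge 0$'' with the statement's ``$n\le m$''. For the paper's purposes the issue is harmless: the only place the lemma is used is to compute $c_{k}$ when $\ell=k$, and there the nonzero terms do satisfy $n\le k=m$.
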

In the appendix we use Lemma \ref{Khokhlovform} to prove the following
result.
\begin{lem}
\label{Lemma:kvad}\label{LemmaEfterKhoklov}
\[
\left(C_{k}^{\lambda}\left(x\right)\right)^{2}=\frac{\lambda^{k}}{k!}\sum_{m=0}^{2k}c_{m}C_{m}^{\lambda}\left(x\right)
\]
 where $c_{m}$ as a function of $k$ and $\lambda$ is given by
\[
\left(m!\lambda^{m}\right)^{-\nicefrac{1}{2}}\sum_{n=0}^{m}\frac{\left(k^{\underline{n}}\right)^{3}n^{\underline{k-n}}}{n!\lambda^{n}}.
\]

\end{lem}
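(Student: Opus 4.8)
The plan is to read off Lemma~\ref{LemmaEfterKhoklov} as the diagonal case $\ell=k$ of Khokhlov's identity (Lemma~\ref{Khokhlovform}), and then to evaluate the resulting coefficient in closed form. Putting $\ell=k$ in Lemma~\ref{Khokhlovform}, the sign $(-1)^{k+\ell}$ becomes $1$ and $(\lambda^{k+\ell}/(k!\ell!))^{1/2}$ becomes $\lambda^{k}/k!$, which already produces the stated prefactor and the summation range $0\le m\le 2k$. What is left is to show that
\[
\sum_{n=0}^{m}\frac{1}{n!\lambda^{n}}\,\sum_{\mu=0}^{k}\sum_{\nu=0}^{k}\binom{k}{\mu}\binom{k}{\nu}\mu^{\underline{n}}\nu^{\underline{n}}(\mu+\nu-n)^{\underline{m}}(-1)^{\mu+\nu}
\]
agrees with $\sum_{n=0}^{m}(k^{\underline{n}})^{3}n^{\underline{k-n}}/(n!\lambda^{n})$, and it is cleanest to do this termwise in $n$, that is, to evaluate the inner double sum over $\mu,\nu$ --- call it $S_{n,m}$ --- by itself.

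For this I would first apply the elementary identity $\binom{k}{\mu}\mu^{\underline{n}}=k^{\underline{n}}\binom{k-n}{\mu-n}$ to pull a factor $k^{\underline{n}}$ out of each of the $\mu$- and $\nu$-sums, and then substitute $\mu=n+a$, $\nu=n+b$. Since $(-1)^{\mu+\nu}=(-1)^{a+b}$ and $\mu+\nu-n=n+a+b$, this rewrites $S_{n,m}$ as $(k^{\underline{n}})^{2}$ times
\[
\sum_{a=0}^{k-n}\sum_{b=0}^{k-n}\binom{k-n}{a}\binom{k-n}{b}(-1)^{a+b}(n+a+b)^{\underline{m}}.
\]
Next I would expand $(n+a+b)^{\underline{m}}$ by the Vandermonde--Chu identity for falling factorials, $(x+y)^{\underline{m}}=\sum_{i}\binom{m}{i}x^{\underline{i}}y^{\underline{m-i}}$, applied first to separate $a$ from $b+n$ and then once more to separate $b$ from $n$. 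After this the two binomial sums decouple into expressions of the form $\sum_{j=0}^{p}\binom{p}{j}(-1)^{j}j^{\underline{r}}$ with $p=k-n$; each such sum is, up to a sign, the $p$-th forward difference of the polynomial $x^{\underline{r}}$ at the origin, and hence equals $(-1)^{p}p!$ when $r=p$ and $0$ otherwise. This annihilation kills all but one term in each of the $a$- and $b$-sums and forces the surviving falling-factorial indices to equal $k-n$.

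What remains is bookkeeping: collect the single surviving term, simplify the leftover falling factorials (using $\binom{m}{k-n}(k-n)!=m^{\underline{k-n}}$ and similar identities), read off the resulting closed form of $S_{n,m}$, and substitute back into the formula for $c_{m}$. The main obstacle is precisely this multi-index bookkeeping --- keeping the two applications of Vandermonde--Chu straight and tracking which terms survive the finite-difference annihilation --- together with being careful about the convention for falling factorials at small or negative arguments: for instance $n^{\underline{k-n}}$ vanishes exactly when $k-n>n$, which is the mechanism that makes only finitely many values of $n$ contribute to a given $c_{m}$, and several intermediate sums are nonempty only for a restricted range of $m$, all of which is again encoded automatically by the vanishing of the corresponding falling factorials. (An alternative that sidesteps Khokhlov's lemma is to square the generating function $\sum_{n}C_{n}^{\lambda}(x)s^{n}/\sqrt{n!}=e^{-s\sqrt{\lambda}}(1+s/\sqrt{\lambda})^{x}$ and extract the coefficient of $s^{k}u^{k}$; but since Lemma~\ref{Khokhlovform} is already available, the route above is shorter.)
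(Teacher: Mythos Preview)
Your plan is essentially the paper's own argument. The paper also specializes Khokhlov's identity to $\ell=k$ and then evaluates the inner double sum via a Vandermonde expansion of $(\mu+\nu-n)^{\underline{\,\cdot\,}}$ followed by the same alternating-sum annihilation $\sum_{j}(-1)^{j}\binom{p}{j}j^{\underline{r}}=(-1)^{p}p!\,\delta_{r,p}$. The only cosmetic difference is that the paper applies the \emph{trinomial} Vandermonde directly to the splitting $\mu+\nu-n=(\mu-n)+(\nu-n)+n$ and then simplifies $\binom{k}{\mu}\mu^{\underline{n}}(\mu-n)^{\underline{a}}=k^{\underline{n+a}}\binom{k-n-a}{\mu-n-a}$, whereas you first extract $(k^{\underline{n}})^{2}$ by the substitution $\mu=n+a$, $\nu=n+b$ and then apply the binomial Vandermonde twice to $(n+a+b)^{\underline{m}}$.

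One warning about doing it ``termwise in $n$'' for arbitrary $m$: the paper in fact only carries out the computation for $m=k$ (the sole coefficient needed downstream in Lemma~\ref{LemmaKoklpos}). For general $m$ your method yields $S_{n,m}=(k^{\underline{n}})^{2}\,m^{\underline{2(k-n)}}\,n^{\underline{m-2(k-n)}}$, which collapses to the stated $(k^{\underline{n}})^{3}n^{\underline{k-n}}$ precisely when $m=k$ (via $k^{\underline{2(k-n)}}=k^{\underline{k-n}}n^{\underline{k-n}}$ and $n^{\underline{k-n}}n^{\underline{2n-k}}=k^{\underline{n}}n^{\underline{2n-k}}\cdot\frac{n!}{(k-n)!}\big/\frac{k!}{(k-n)!}$, or more directly by matching factorials). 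So do not be alarmed if the termwise identity fails to match the displayed summand for $m\neq k$; that is a looseness in the lemma's statement, not in your method.
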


\begin{lem}
\label{LemmaKoklpos} For a Poisson random variable $X$ with mean
value $\lambda$ we have 
\[
\mathrm{{E}}\left[C_{k}^{\lambda}\left(X\right)^{3}\right]>0
\]
 for any $k\in\mathbb{N}_{0}.$ \end{lem}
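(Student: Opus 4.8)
The plan is to collapse the third-moment computation to a single coefficient in the expansion of $\left(C_k^\lambda\right)^2$ supplied by Lemma \ref{LemmaEfterKhoklov}, and then to observe that this coefficient is visibly positive.

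First I would write, for $X\sim\mathrm{Po}\left(\lambda\right)$,
\[
\mathrm{{E}}\left[C_k^\lambda\left(X\right)^3\right]=\mathrm{{E}}\left[C_k^\lambda\left(X\right)\cdot C_k^\lambda\left(X\right)^2\right]
\]
and insert the identity $C_k^\lambda\left(x\right)^2=\frac{\lambda^k}{k!}\sum_{m=0}^{2k}c_m C_m^\lambda\left(x\right)$ from Lemma \ref{LemmaEfterKhoklov}. Because the Poisson-Charlier polynomials are orthonormal with respect to $\mathrm{{Po}}\left(\lambda\right)$, so that $\mathrm{{E}}\left[C_k^\lambda\left(X\right)C_m^\lambda\left(X\right)\right]$ equals $1$ for $m=k$ and $0$ otherwise, only the term $m=k$ survives and
\[
\mathrm{{E}}\left[C_k^\lambda\left(X\right)^3\right]=\frac{\lambda^k}{k!}\,c_k,\qquad c_k=\left(k!\lambda^k\right)^{-\nicefrac{1}{2}}\sum_{n=0}^{k}\frac{\left(k^{\underline{n}}\right)^3\,n^{\underline{k-n}}}{n!\,\lambda^n}.
\]
For $k=0$ this merely records $\mathrm{{E}}\left[1\right]=1$, consistent with the claim.

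Next I would check that every summand defining $c_k$ is nonnegative, so that $c_k>0$. For $0\le n\le k$ the falling factorial $k^{\underline{n}}$ is a product of positive integers, hence $\left(k^{\underline{n}}\right)^3>0$; and $n^{\underline{k-n}}$ is either a product of positive integers, namely when $k-n\le n$ so that its smallest factor $2n-k+1$ is $\ge1$, or else zero, namely when $k-n>n$ so that the descending product $n\left(n-1\right)\cdots$ reaches the factor $0$. In either case $n^{\underline{k-n}}\ge0$. The remaining factors $n!\lambda^n$ and $\left(k!\lambda^k\right)^{\nicefrac{1}{2}}$ are strictly positive. Finally the $n=k$ term equals $\left(k!\lambda^k\right)^{-\nicefrac{1}{2}}\left(k!\right)^3/\left(k!\lambda^k\right)$, which is strictly positive, so $c_k>0$ and therefore $\mathrm{{E}}\left[C_k^\lambda\left(X\right)^3\right]=\left(\lambda^k/k!\right)c_k>0$.

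I do not expect any genuine obstacle here: the argument is simply orthonormality followed by a positivity inspection. The only point requiring a little care is the sign analysis of $n^{\underline{k-n}}$ across the ranges $n<k/2$ and $n\ge k/2$, which is precisely why Lemma \ref{LemmaEfterKhoklov} records the coefficient $c_k$ in terms of that particular combination of falling factorials.
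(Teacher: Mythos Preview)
Your proof is correct and follows essentially the same route as the paper: expand $\left(C_k^\lambda\right)^2$ via Lemma~\ref{LemmaEfterKhoklov}, use orthonormality so that only the coefficient $c_k$ survives, and then observe that each summand $\left(k^{\underline{n}}\right)^3 n^{\underline{k-n}}$ is nonnegative with at least one strictly positive term. The paper phrases the last step by noting positivity on the whole range $\nicefrac{k}{2}\le n\le k$, whereas you single out $n=k$; both suffice.
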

\begin{proof}
According to Lemma \ref{LemmaEfterKhoklov} we have 
\begin{align*}
\mathrm{{E}}\left[\left(C_{k}^{\lambda}\left(X\right)\right)^{3}\right] & =\mathrm{{E}}\left[\left(\frac{\lambda^{k}}{k!}\sum_{m=0}^{2k}c_{m}C_{m}^{\lambda}\left(X\right)\right)C_{k}^{\lambda}\left(X\right)\right]\\
 & =\frac{\lambda^{k}}{k!}c_{k}.
\end{align*}
 where $c_{k}$ is defined in Lemma \ref{LemmaEfterKhoklov}. Therefore
it is sufficient to prove that
\[
\sum_{n=0}^{k}\frac{\left(k^{\underline{n}}\right)^{3}n^{\underline{k-n}}}{n!\lambda^{n}}>0.
\]
 This follows because $\left(k^{\underline{n}}\right)^{3}n^{\underline{k-n}}$
is always non-negative and it is positive when $\nicefrac{k}{2}\leq n\leq k.$ 
\end{proof}

\section{Existence of minimum information distributions\label{sec:mininf}}

Let $X$ be a random variable for which the moments of order $1,2,...\ell$
exist. We shall assume that $\mathrm{{E}}\left[X\right]=\lambda$.
We are interested in minimizing information divergence $D\left(X\Vert\mathrm{{Po}}\left(\lambda\right)\right)$
under linear conditions on the moments of $X$ and derive conditions
for a minimum information distribution to exist. We shall use $D\left(C\Vert\mathrm{{Po}}\left(\lambda\right)\right)$
as notation for $\inf_{P\in C}D\left(P\Vert\mathrm{{Po}}\left(\lambda\right)\right)$
\begin{lem}
\label{LemmaMinLig}For some fixed set $\left(h_{1},\cdots,h_{\ell}\right)\in\mathbb{R}^{\ell}$,
let $K$ be the convex set of distributions on $\mathbb{N}_{0}$ for
which the first $\ell$ moments are defined and which satisfies the
following conditions 
\begin{align}
\mathrm{{E}}_{P}\left[C_{k}^{\lambda}\left(X\right)\right] & =h_{k}~,\text{ for }k=1,2,\cdots,\ell-1;\label{betingelse1}\\
\mathrm{{E}}_{P}\left[C_{\ell}^{\lambda}\left(X\right)\right] & \leq h_{\ell}\;.\label{betingelse-1}
\end{align}
 If $D\left(K\Vert\mathrm{{Po}}\left(\lambda\right)\right)<\infty$
then the minimum information projection of $\mathrm{{Po}}\left(\lambda\right)$
exists. \end{lem}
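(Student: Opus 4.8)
The plan is to realize the minimum information projection as a weak limit of a minimizing sequence, using lower semicontinuity of divergence and a tightness/compactness argument tailored to the moment constraints \eqref{betingelse1}--\eqref{betingelse-1}. Since $K$ is convex and $D(\cdot\Vert\mathrm{Po}(\lambda))$ is convex and lower semicontinuous in the topology of setwise (hence weak) convergence, the standard strategy is: take $P_n\in K$ with $D(P_n\Vert\mathrm{Po}(\lambda))\to D(K\Vert\mathrm{Po}(\lambda))=:d<\infty$; show the sequence $(P_n)$ is tight, so along a subsequence $P_n\Rightarrow P^\ast$ for some probability measure $P^\ast$ on $\mathbb{N}_0$; then argue $P^\ast\in K$ and $D(P^\ast\Vert\mathrm{Po}(\lambda))\le d$ by lower semicontinuity, forcing equality. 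Tightness on $\mathbb{N}_0$ is automatic (every sequence has a subsequence converging pointwise to a subprobability), so the real content is (a) ruling out escape of mass to infinity, i.e. that the limit $P^\ast$ is a genuine probability measure, and (b) showing the limit still satisfies the moment conditions, i.e. that the constraint functionals $P\mapsto \mathrm{E}_P[C_k^\lambda(X)]$ are preserved in the limit.

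First I would establish uniform integrability of the relevant moments along the minimizing sequence. The key leverage is that $D(P_n\Vert\mathrm{Po}(\lambda))$ is bounded, say by $d+1$ eventually, while $\mathrm{Po}(\lambda)$ has all exponential moments finite; by the Donsker--Varadhan variational formula (or directly, Young's inequality for the pair $x\mapsto x\log x$ and $x\mapsto e^x-1$), for any function $g$ with $\mathrm{E}_{\mathrm{Po}(\lambda)}[e^{tg}]<\infty$ one has
\[
t\,\mathrm{E}_{P_n}[g] \le D(P_n\Vert\mathrm{Po}(\lambda)) + \log \mathrm{E}_{\mathrm{Po}(\lambda)}\!\left[e^{tg}\right].
\]
Applying this with $g(x)=x^j$ for $j\le \ell$ (and small $t>0$) bounds $\sup_n \mathrm{E}_{P_n}[x^j]$; applying it with $g(x)=x^{\ell}\mathbf 1_{x\ge M}$ and letting $M\to\infty$ gives $\sup_n\mathrm{E}_{P_n}[x^\ell\mathbf 1_{x\ge M}]\to 0$, which is exactly uniform integrability of the $\ell$-th (hence all lower) moments. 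This simultaneously prevents loss of mass (so $P^\ast(\mathbb{N}_0)=1$) and, combined with pointwise convergence $P_n(k)\to P^\ast(k)$, lets me pass to the limit in $\mathrm{E}_{P_n}[C_k^\lambda(X)]$ for every $k\le\ell$ since each $C_k^\lambda$ is a polynomial of degree $\le\ell$ dominated by a uniformly integrable function. Hence the equalities in \eqref{betingelse1} persist and the inequality in \eqref{betingelse-1} persists (the latter only needs $\liminf$, which is even easier), so $P^\ast\in K$.

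Then I finish with lower semicontinuity: since $P_n\to P^\ast$ setwise on the countable space $\mathbb{N}_0$, the functional $P\mapsto D(P\Vert\mathrm{Po}(\lambda))$ is lower semicontinuous (this is a standard consequence of the variational representation of divergence as a supremum of affine continuous functionals, or of Fatou applied to $\sum_k P(k)\log\frac{P(k)}{\mathrm{Po}(\lambda)(k)}$ after splitting into positive and negative parts and controlling the negative part by the already-established moment bound), so
\[
D\!\left(P^\ast\Vert\mathrm{Po}(\lambda)\right) \le \liminf_{n\to\infty} D\!\left(P_n\Vert\mathrm{Po}(\lambda)\right) = d.
\]
Combined with $P^\ast\in K$ and the definition of $d$ as the infimum over $K$, this yields $D(P^\ast\Vert\mathrm{Po}(\lambda))=d$, so $P^\ast$ is the desired minimum information projection. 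I expect the main obstacle to be step (a)--(b): making the uniform-integrability argument fully rigorous, in particular handling the possibility that the constraint functional $\mathrm{E}_P[C_\ell^\lambda(X)]$ is only an inequality (which is what allows mass to potentially drift) and checking that the Donsker--Varadhan bound really does control the top-degree moment uniformly given only a divergence bound; everything after tightness is routine convexity and semicontinuity. Uniqueness of the projection, if desired, follows from strict convexity of $D(\cdot\Vert\mathrm{Po}(\lambda))$ on the convex set $K$.
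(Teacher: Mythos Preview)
Your overall architecture (minimizing sequence, tightness, preservation of constraints, lower semicontinuity) is sound, but the engine you propose for uniform integrability does not run. You write that ``$\mathrm{Po}(\lambda)$ has all exponential moments finite'' and then apply Donsker--Varadhan with $g(x)=x^{j}$. The Poisson distribution satisfies $\mathrm{E}_{\mathrm{Po}(\lambda)}[e^{tX}]<\infty$ for every $t$, but for $j\ge 2$ one has $\mathrm{E}_{\mathrm{Po}(\lambda)}[e^{tX^{j}}]=\infty$ for every $t>0$, since $e^{tk^{j}}$ eventually dominates $k!$. So the variational bound gives you nothing for the moments of order $2,\ldots,\ell$. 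This is not a technicality: a divergence bound against Poisson genuinely does \emph{not} control second and higher moments. For instance, put mass $1-\varepsilon$ at $0$ and mass $\varepsilon$ at $N$ with $\varepsilon=1/(N\log N)$; then $D(P\Vert\mathrm{Po}(\lambda))$ stays bounded as $N\to\infty$ while $\mathrm{E}_P[X^{2}]=N/\log N\to\infty$. Thus along a minimizing sequence you cannot extract uniform integrability of $x^{\ell}$ from the divergence bound alone, and your step (b) collapses. You even flag this as the likely obstacle; it is indeed fatal as written.

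The missing idea is that the \emph{constraints themselves}, not the divergence bound, furnish the moment control. This is what the paper does, and more cleanly: for each vector $\vec G\in\mathbb{R}^{\ell-1}$ the set
\[
C_{\vec G}=\Bigl\{P:\ \mathrm{E}_P\Bigl[C_{\ell}^{\lambda}(X)-h_{\ell}-\textstyle\sum_{k<\ell}G_k\bigl(C_k^{\lambda}(X)-h_k\bigr)\Bigr]\le 0\Bigr\}
\]
is defined by a single inequality on a polynomial of exact degree $\ell$ with positive leading coefficient, hence is tight; and $K=\bigcap_{\vec G}C_{\vec G}$, so $K$ itself is tight (indeed compact), without ever looking at a minimizing sequence or invoking Donsker--Varadhan. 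Once you see that the top constraint together with the lower equalities bounds $\mathrm{E}_P[X^{\ell}]$ uniformly over all of $K$, your semicontinuity endgame goes through; but at that point you have essentially reproduced the paper's argument.
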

\begin{proof}
Let $\vec{G}\in\mathbb{R}^{\ell-1}$ be a vector and let $C_{\vec{G}}$
be the set of distributions satisfying the following inequalities
\[
\mathrm{{E}}\left[C_{\ell}^{\lambda}\left(X\right)-h_{\ell}-\sum_{k<\ell}G_{k}\cdot\left(C_{k}^{\lambda}\left(X\right)-h_{k}\right)\right]\leq0.
\]
 We see that the set $C_{\vec{G}}$ is tight because $C_{\ell}^{\lambda}\left(x\right)-h_{\ell}-\sum_{k<\ell}G_{k}\left(C_{k}^{\lambda}\left(x\right)-h_{k}\right)\rightarrow\infty$
for $x\rightarrow\infty$. Therefore the intersection $K=\bigcap_{\vec{G}\in\mathbf{R}^{\ell-1}}C_{\vec{G}}$
is compact. There exists a distribution $P^{\ast}\in K$ such that
the information divergence $D\left(P\Vert\mathrm{{Po}}\left(\lambda\right)\right)$
is minimal because $K$ is compact. \end{proof}
\begin{thm}
\label{thm:mininf copy(1)} Let $C$ be the set of distributions on
$\mathbb{N}_{0}$ for which the first $\ell$ moments are defined
and satisfy the following equations 
\begin{equation}
\mathrm{{E}}\left[C_{k}^{\lambda}\left(X\right)\right]=h_{k}\text{ for }k=1,2,\cdots,\ell\,.\label{betingelser}
\end{equation}
 Assume that $D\left(C\Vert\mathrm{{Po}}\left(\lambda\right)\right)<\infty$
and $\ell\geq2.$ Consider the following three cases:\end{thm}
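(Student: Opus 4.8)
The plan is to reduce the equality-constrained problem over $C$ to the inequality-constrained one already settled in Lemma~\ref{LemmaMinLig}, and then to separate three cases according to the position of $h_\ell$ relative to a critical value fixed by the first $\ell-1$ constraints. I first set up the exponential family $\mathcal E$ obtained from $\mathrm{Po}(\lambda)$ by tilting with the statistics $C_1^\lambda,\dots,C_\ell^\lambda$, and describe its natural parameter domain $\Gamma$. The structural point is that on $\mathbb N_0$ the falling factorial $x^{\underline\ell}$ is nonnegative, so $C_\ell^\lambda(x)=(\ell!\lambda^\ell)^{-1/2}x^{\underline\ell}\bigl(1+o(1)\bigr)\to+\infty$, whereas $-\ln\mathrm{Po}(\lambda)(x)=x\ln x+O(x)$; hence $\beta_\ell C_\ell^\lambda$ overwhelms the decay of the reference measure unless $\beta_\ell\le 0$, and when $\beta_\ell=0$ the same reasoning applies in turn to $\beta_{\ell-1}$, which is why $\ell\ge 2$ is assumed. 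Thus tilting in the direction $C_\ell^\lambda$ is \emph{one-sided}, and it can only lower $\mathrm E[C_\ell^\lambda(X)]$ below $h_\ell^{c}:=\mathrm E_{P^{\ast\ast}}[C_\ell^\lambda(X)]$, where $P^{\ast\ast}$ is the information projection of $\mathrm{Po}(\lambda)$ onto the set defined by the first $\ell-1$ equalities; this projection exists and lies in $\mathcal E$ with $\beta_\ell=0$ by applying Lemma~\ref{LemmaMinLig} with $\ell$ replaced by $\ell-1$ (tightness now coming from $C_{\ell-1}^\lambda$).

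Next I invoke Lemma~\ref{LemmaMinLig} directly: with $K$ equal to $C$ but with the $\ell$-th equality relaxed to $\mathrm E[C_\ell^\lambda(X)]\le h_\ell$, we have $C\subseteq K$ and $D(K\Vert\mathrm{Po}(\lambda))\le D(C\Vert\mathrm{Po}(\lambda))<\infty$, so the information projection $P^\ast$ of $\mathrm{Po}(\lambda)$ onto $K$ exists. A standard local-perturbation argument (shift a mass $\varepsilon$ between two atoms and differentiate $D(\cdot\Vert\mathrm{Po}(\lambda))$ at $\varepsilon=0$, as in Csisz\'{a}r~\cite{Csiszar1975}) forces $P^\ast$ to have exponential form $\mathrm dP^\ast/\mathrm d\mathrm{Po}(\lambda)\propto\exp(\sum_{k\le\ell}\beta_kC_k^\lambda)$ with the first $\ell-1$ constraints active, $\beta_\ell\le 0$, and the complementary-slackness alternative $\beta_\ell=0$ or $\mathrm E_{P^\ast}[C_\ell^\lambda(X)]=h_\ell$. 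The derivative of $\mathrm E[C_\ell^\lambda(X)]$ along the curve on which the first $\ell-1$ moments stay pinned equals the Schur complement of the covariance matrix of $(C_1^\lambda,\dots,C_\ell^\lambda)$ under $Q_\beta$, hence is strictly positive (cf. the computations behind Lemma~\ref{PeterG}), so $\mathrm E[C_\ell^\lambda(X)]$ increases strictly with $\beta_\ell$. Comparing $h_\ell$ with $h_\ell^{c}$ then yields the trichotomy: (i) if $h_\ell<h_\ell^{c}$ there is a unique admissible $\beta_\ell<0$ realising it, $P^\ast\in C$, and $P^\ast$ is the minimum information distribution over $C$; (ii) if $h_\ell=h_\ell^{c}$ then $P^\ast=P^{\ast\ast}\in C$ is the minimum information distribution, lying in the lower-dimensional subfamily so that the $\ell$-th moment adds no information; (iii) if $h_\ell>h_\ell^{c}$ then $D(C\Vert\mathrm{Po}(\lambda))=D(P^{\ast\ast}\Vert\mathrm{Po}(\lambda))$ but the infimum is not attained, since any minimiser over $C$ would also minimise over the larger set carrying only the first $\ell-1$ equalities and hence coincide with $P^{\ast\ast}\notin C$.

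The one analytic point needing work in case (iii) is the identity $D(C\Vert\mathrm{Po}(\lambda))=D(P^{\ast\ast}\Vert\mathrm{Po}(\lambda))$, which I prove by an escaping-mass perturbation of $P^{\ast\ast}$: transfer a mass $\varepsilon\asymp N^{-\ell}$ onto a remote atom $N$ so that $\mathrm E[C_\ell^\lambda(X)]$ rises by exactly $h_\ell-h_\ell^{c}$, and then repair the resulting $O(N^{-1})$ drift of the lower Poisson--Charlier moments by further perturbations of size $O(N^{-1})$; the extra divergence contributed is of order $\varepsilon\bigl(-\ln\mathrm{Po}(\lambda)(N)\bigr)\asymp N^{-(\ell-1)}\ln N\to 0$, which gives a sequence in $C$ with $D\to D(P^{\ast\ast}\Vert\mathrm{Po}(\lambda))$, the reverse inequality being immediate from inclusion of sets. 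The step I expect to be the main obstacle is the boundary behaviour of $\Gamma$ in case (i): one must control the limit of $\mathrm E_{Q_\beta}[C_\ell^\lambda(X)]$ as $\beta_\ell$ tends to the lower edge of $\Gamma$ and rule out — or correctly absorb into the degenerate sub-cases — the possibility that this limit is finite and is realised only by a finitely supported element of $\overline{\mathcal E}$. Since $C_\ell^\lambda$ is bounded below on $\mathbb N_0$ and tends to $+\infty$, this reduces to checking whether $h_\ell$ exceeds $\min_{x\in\mathbb N_0}C_\ell^\lambda(x)$ as refined by the first $\ell-1$ constraints, which pins down exactly which alternative holds and confirms consistency with the standing hypothesis $D(C\Vert\mathrm{Po}(\lambda))<\infty$.
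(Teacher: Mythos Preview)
Your trichotomy (i)--(iii) is not the theorem's three cases. The theorem fixes three \emph{specific} hypothesis patterns on $(h_1,\dots,h_\ell)$: in cases~1 and~2 one has $h_k=0$ for all $k<\ell$, and in case~3 one has $h_k=0$ for $k<\ell-1$ together with $h_{\ell-1}>0$ (and $h_\ell$ unrestricted). Your split by $h_\ell\lessgtr h_\ell^{c}$ recovers only cases~1 and~2, where $P^{\ast\ast}=\mathrm{Po}(\lambda)$ and $h_\ell^{c}=0$; and even there the paper's argument is far simpler than yours --- for case~2 it just mixes the $K$-minimiser $P^\ast$ with $\mathrm{Po}(\lambda)$, notes that the mixture preserves the lower equalities (they are all zero) and has strictly smaller divergence unless the $\ell$-th constraint was already tight.

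The genuine gap is case~3. Your argument rests on the existence of $P^{\ast\ast}$, the projection onto the first $\ell-1$ \emph{equalities}, which you claim follows from Lemma~\ref{LemmaMinLig} with $\ell$ replaced by $\ell-1$. But that lemma only produces the minimiser over the set with the $(\ell-1)$-st constraint relaxed to an \emph{inequality}; passing from inequality to equality is exactly the content of the theorem you are trying to prove. In the case~3 configuration the first $\ell-2$ moments are zero and $h_{\ell-1}>0$, which is precisely case~1 one level down: the projection onto the first $\ell-1$ equalities does \emph{not} exist (the infimum is $0$, unattained), so your $P^{\ast\ast}$ and $h_\ell^{c}$ are undefined. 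The paper exploits this rather than fights it: by case~1 at level $\ell-1$ there exist distributions $\tilde P$ satisfying the first $\ell-1$ equalities with $D(\tilde P\Vert\mathrm{Po}(\lambda))$ arbitrarily small, in particular smaller than $D(P^\ast\Vert\mathrm{Po}(\lambda))$; one then mixes $P^\ast$ with $\tilde P$ (not with $\mathrm{Po}(\lambda)$, which would break the $(\ell-1)$-st equality) to force the $\ell$-th constraint to become active. That two-step reduction --- case~1 at level $\ell-1$ feeding a mixing partner into case~3 --- is the idea your proposal is missing.
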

\begin{enumerate}
\item $h_{k}=0$ for $k<\ell$ and $h_{\ell}>0.$
\item $h_{k}=0$ for $k<\ell$ and $h_{\ell}<0.$
\item $h_{k}=0$ for $k<\ell-1$ and $h_{\ell-1}>0.$ 
\end{enumerate}
In case 1 no minimizer exists and $D\left(C\Vert\mathrm{Po}\left(\lambda\right)\right)=0$.
In case 2 and 3 there exists a distribution $P^{\ast}\in C$ that
minimizes $D\left(P\Vert\mathrm{{Po}}\left(\lambda\right)\right).$
\begin{proof}
\textbf{Case 1.} If a minimizer existed it would be an element of
the corresponding exponential family, but the partition function cannot
be finite because $h_{\ell}>0$ and $\ell\geq2.$

For cases 2 and 3 let $P=P^{\ast}$ be the minimum information distribution
satisfying the conditions.

\textbf{Case 2}. Assume that $h_{k}=0$ for $k<\ell$ and $h_{\ell}<0.$
Assume also that $\mathrm{{E}}_{P^{\ast}}\left[C_{\ell}^{\lambda}\left(X\right)\right]<h_{\ell}~.$
Define $P^{\theta}=\theta P^{\ast}+\left(1-\theta\right)\mathrm{{Po}}\left(\lambda\right).$
Then the conditions (\ref{betingelse1}) holds for $P=P^{\theta}$
and
\begin{align*}
\mathrm{{E}}_{P^{\theta}}\left[C_{\ell}^{\lambda}\left(X\right)\right] & =\theta\mathrm{{E}}_{P^{\ast}}\left[C_{\ell}^{\lambda}\left(X\right)\right]+\left(1-\theta\right)\mathrm{{E}}_{\mathrm{{Po}}\left(\lambda\right)}\left[C_{\ell}^{\lambda}\left(X\right)\right]\\
 & =\theta\mathrm{{E}}_{P^{\ast}}\left[C_{\ell}^{\lambda}\left(X\right)\right].
\end{align*}
 Thus $\mathrm{{E}}_{P^{\theta}}\left[C_{\ell}\left(\lambda,X\right)\right]=h_{\ell}$
if 
\[
\theta=\frac{h_{\ell}}{\mathrm{{E}}_{P^{\ast}}\left[C_{\ell}^{\lambda}\left(X\right)\right]}\in\left]0,1\right[.
\]
 Therefore $P^{\theta}$ satisfies (\ref{betingelser}) but 
\[
D\left(P^{\theta}\Vert\mathrm{{Po}}\left(\lambda\right)\right)\leq\theta D\left(P^{\ast}\Vert\mathrm{{Po}}\left(\lambda\right)\right)<D\left(P^{\ast}\Vert\mathrm{{Po}}\left(\lambda\right)\right)
\]
 and we have a contradiction.

\textbf{Case 3}. Now, assume that $h_{k}=0$ for $k<\ell-1$ and $h_{\ell-1}>0.$
Moreover, assume that $\mathrm{{E}}_{P^{\ast}}\left[C_{\ell}^{\lambda}\left(X\right)\right]<h_{\ell}$.
Using the result of case 1 we see that there exists a distribution
$\tilde{P}$ for which the $\ell$ first moments exist and that the
first $\ell-1$ moments satisfy (\ref{betingelse1}) but with $D\left(\tilde{P}\Vert\mathrm{{Po}}\left(\lambda\right)\right)<D\left(P^{\ast}\Vert\mathrm{{Po}}\left(\lambda\right)\right).$
Define $P^{\theta}=\theta P^{\ast}+\left(1-\theta\right)\tilde{P}\left(\lambda\right).$
Then the conditions (\ref{betingelse1}) holds for $P=P^{\theta}$
and
\begin{multline}
\mathrm{{E}}_{P^{\theta}}\left[C_{\ell}^{\lambda}\left(X\right)\right]=\\
\theta\mathrm{{E}}_{P^{\ast}}\left[C_{\ell}^{\lambda}\left(X\right)\right]+\left(1-\theta\right)\theta\mathrm{{E}}_{\tilde{P}}\left[C_{\ell}^{\lambda}\left(X\right)\right].
\end{multline}
 Therefore $\mathrm{{E}}_{P^{\theta}}\left[C_{\ell}^{\lambda}\left(X\right)\right]\leq h_{\ell}$
for $\theta$ sufficiently close to $1$ but $D\left(P^{\theta}\Vert\mathrm{{Po}}\left(\lambda\right)\right)\leq\theta D\left(P^{\ast}\Vert\mathrm{{Po}}\left(\lambda\right)\right)<D\left(P^{\ast}\Vert\mathrm{{Po}}\left(\lambda\right)\right)$
and we have a contradiction. Therefore $P^{\ast}$ satisfies the equation
$\mathrm{{E}}\left[C_{\ell}^{\lambda}\left(X\right)\right]=h_{\ell}.$ 
\end{proof}
For the applications we have in mind it will be easy to check the
condition 
\[
D\left(C\Vert\mathrm{{Po}}\left(\lambda\right)\right)<\infty,
\]
 but in general it may be difficult even to determine simple necessary
and sufficient conditions for $C\neq\varnothing$ in terms of a set
of specified moments.

\section{Lower bounds\label{SecLower}}

First we consider the exponential family based on the distribution
$Po\left(\lambda\right)$. The variance function of this exponential
family is $V\left(\mu\right)=\mu$ which is an increasing function.
Hence 
\[
G\left(\mbox{\ensuremath{\mu}}\right)\leq\frac{\mu-\lambda}{\lambda^{\nicefrac{1}{2}}}.
\]
 Squaring this inequality for $\mu\leq\lambda$ gives
\begin{equation}
D\left(\mathrm{{Po}}\left(\mu\right)\Vert\mathrm{{Po}}\left(\lambda\right)\right)\geq\frac{\left(\mu-\lambda\right)^{2}}{2\lambda}.\label{foerste}
\end{equation}

Let $X$ be a random variable with values in $\mathbb{N}_{0}$ and
with mean $\mu.$ Then the divergence $D\left(X\Vert\mathrm{{Po}}\left(\lambda\right)\right)$
is minimal if the distribution of $X$ is an element of the associated
exponential family, i.e.
\begin{align*}
D\left(X\Vert\mathrm{{Po}}\left(\lambda\right)\right) & \geq D\left(\mathrm{{Po}}\left(\mu\right)\Vert\mathrm{{Po}}\left(\lambda\right)\right)\\
 & \geq\frac{\mathrm{{E}}\left[C_{1}^{\lambda}\left(X\right)\right]^{2}}{2}.
\end{align*}
 We conjecture that a result similar to (\ref{foerste}) holds for
any order of the Poisson-Charlier polynomial.
\begin{conjecture}
\label{KonjMindre}For any random variable $X$ with values in $\mathbb{N}_{0}$
and for any $k\in\mathbb{N}$ we have 
\begin{equation}
D\left(X\Vert\mathrm{{Po}}\left(\lambda\right)\right)\geq\frac{\mathrm{{E}}\left[C_{k}^{\lambda}\left(X\right)\right]^{2}}{2}\ \label{conjecture}
\end{equation}
 if $\mathrm{{E}}\left[C_{k}^{\lambda}\left(X\right)\right]\leq0.$ 
\end{conjecture}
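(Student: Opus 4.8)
The plan is to bound the right-hand side of (\ref{conjecture}) by an information projection onto the one-parameter exponential family obtained by exponentially tilting $\mathrm{Po}(\lambda)$ along the statistic $C_{k}^{\lambda}$, and then to run the quadratic estimates of Lemma~\ref{PeterG} and Proposition~\ref{prop:SignedLogLikelihood}. The case $k=1$ is the inequality displayed just above (\ref{foerste}), and if $\mathrm{E}[C_{k}^{\lambda}(X)]=0$ there is nothing to prove; so assume throughout that $k\ge 2$ and that $h:=\mathrm{E}[C_{k}^{\lambda}(X)]<0$ (we may also take the $k$-th moment of $X$ to be finite, so that $h$ is a real number; otherwise the inequality is trivial).

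\textbf{Step 1: a one-sided tilted family and the Gibbs bound.} Put $Z(\beta)=\mathrm{E}_{\mathrm{Po}(\lambda)}\bigl[\exp(\beta\,C_{k}^{\lambda}(X))\bigr]$ and $\psi(\beta)=\ln Z(\beta)$. Since the leading coefficient of $C_{k}^{\lambda}$ equals $(\lambda^{k}k!)^{-1/2}>0$ and $\deg C_{k}^{\lambda}=k\ge 2$, the series defining $Z(\beta)$ converges if and only if $\beta\le 0$, so the tilted law $Q_{\beta}$, with $\mathrm{d}Q_{\beta}/\mathrm{d}\mathrm{Po}(\lambda)=e^{\beta C_{k}^{\lambda}(x)}/Z(\beta)$, is available for $\beta\le 0$. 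By orthonormality of the Poisson-Charlier polynomials, $\psi(0)=0$, $\psi'(0)=\mathrm{E}_{\mathrm{Po}(\lambda)}[C_{k}^{\lambda}(X)]=0$ and $\psi''(0)=\mathrm{Var}_{\mathrm{Po}(\lambda)}(C_{k}^{\lambda}(X))=1$. The Gibbs (Donsker--Varadhan) inequality with test function $f=\beta\,C_{k}^{\lambda}$ --- which for $\beta\le 0$ is bounded above, so all expectations below are well defined --- gives
\[
D\bigl(X\Vert\mathrm{Po}(\lambda)\bigr)\ \ge\ \mathrm{E}_{X}\bigl[\beta\,C_{k}^{\lambda}(X)\bigr]-\ln Z(\beta)\ =\ \beta h-\psi(\beta),\qquad\beta\le 0,
\]
and hence $D(X\Vert\mathrm{Po}(\lambda))\ \ge\ \sup_{\beta\le 0}\bigl(\beta h-\psi(\beta)\bigr)$. (No existence theory is needed here; but one could also phrase this via the information projection of Lemma~\ref{LemmaMinLig} onto $\{P:\mathrm{E}_{P}[C_{k}^{\lambda}(X)]\le h\}$, whose minimizer is some $Q_{\beta^{*}}$ with $\beta^{*}\le 0$.)

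\textbf{Step 2: reduction to a variance bound.} Because $\psi(0)=\psi'(0)=0$, Taylor's formula gives $\psi(\beta)=\tfrac12\beta^{2}\psi''(\tilde\beta)$ for some $\tilde\beta$ between $0$ and $\beta$, where $\psi''(\beta)=\mathrm{Var}_{Q_{\beta}}(C_{k}^{\lambda}(X))=V(\mu_{\beta})$ is the variance function of the tilted family in the notation of Lemma~\ref{PeterG}. Thus it suffices to prove
\[
\mathrm{Var}_{Q_{\beta}}\bigl(C_{k}^{\lambda}(X)\bigr)\ \le\ 1\qquad\text{for all }\beta\le 0,
\]
for then $\psi(\beta)\le\tfrac12\beta^{2}$ on $(-\infty,0]$ and, since $h<0$,
\[
D\bigl(X\Vert\mathrm{Po}(\lambda)\bigr)\ \ge\ \sup_{\beta\le 0}\Bigl(\beta h-\tfrac12\beta^{2}\Bigr)\ =\ \frac{h^{2}}{2}\ =\ \frac{\mathrm{E}[C_{k}^{\lambda}(X)]^{2}}{2},
\]
which is (\ref{conjecture}). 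Equivalently, this variance bound is precisely the statement $G(\mu)\le\mu$ for $\mu\le 0$ for the tilted family (Proposition~\ref{prop:SignedLogLikelihood}), i.e. the \emph{global} version of the local fact established in Lemma~\ref{LemmaNedreEpsilon}.

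\textbf{Step 3 and the main obstacle.} Since $\psi''(0)=1$ and $\psi'''(\beta)=\mathrm{E}_{Q_{\beta}}\bigl[(C_{k}^{\lambda}(X)-\mu_{\beta})^{3}\bigr]$ is the third central moment of $C_{k}^{\lambda}$ under $Q_{\beta}$, a sufficient condition for the variance bound of Step~2 is that this third central moment be nonnegative for every $\beta\le 0$ (then $\psi''$ is nondecreasing on $(-\infty,0]$, so $\psi''(\beta)\le\psi''(0)=1$ there); at $\beta=0$ this is exactly Lemma~\ref{LemmaKoklpos}. To propagate it to all $\beta\le 0$, I would expand $\bigl(C_{k}^{\lambda}\bigr)^{3}$ in the Poisson-Charlier basis using Lemmas~\ref{Khokhlovform} and \ref{LemmaEfterKhoklov}, reducing the claim to sign control of the tilted moments $\beta\mapsto\mathrm{E}_{Q_{\beta}}[C_{m}^{\lambda}(X)]$, and exploit that for $\beta\le 0$ the tilt $Q_{\beta}$ is a log-concave-type perturbation of $\mathrm{Po}(\lambda)$. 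This last point --- controlling the sign of the third central moment of a degree-$k$ polynomial under \emph{all} negative exponential tilts of a Poisson law, rather than only at the single point $\beta=0$ supplied by Lemma~\ref{LemmaKoklpos} --- is the crux of the matter, and is exactly why the statement is recorded only as a conjecture. For small $k$, and in particular $k=2$ (which already yields Theorem~\ref{ThmNedreVar}), the tilted variance function is explicit and the bound is routine to verify.
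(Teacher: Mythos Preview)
Your proposal does not prove the statement, and you are right that it cannot: this is Conjecture~\ref{KonjMindre}, which the paper explicitly leaves open. There is no proof in the paper to compare against. What the paper does establish is the local version (Theorem~\ref{lokal}) and the full case $k=2$ (Theorem~\ref{nedregraense}), and your Steps~1--2 reproduce exactly the reduction the paper uses for those partial results: project onto the exponential family tilted along $C_{k}^{\lambda}$, and reduce to the variance inequality $\psi''(\beta)\le 1$ for $\beta\le 0$. Your identification of the obstacle in Step~3 --- extending the positivity of the third central moment from $\beta=0$ (Lemma~\ref{LemmaKoklpos}) to all $\beta\le 0$ --- is precisely the gap the paper points to.

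One substantive correction: your final sentence calling the $k=2$ case ``routine to verify'' is wrong, and this matters because it misreads how far the variance-bound strategy actually carries. The paper's proof of Theorem~\ref{nedregraense} is long and splits into two regimes. In Case~1 ($\mathrm{E}[C_{2}^{\lambda}(X)]\ge\beta_{0}$ for a numerical threshold $\beta_{0}\approx -0.753$) the argument does go through your Step~2, but even there the paper does not control $\psi''$ directly; it bounds $Z''(\beta)\le 1$ on $[\beta_{0},0]$ via convexity of $Z''$ and an endpoint check that itself requires a further three-way case split on $\lambda$. In Case~2 ($\mathrm{E}[C_{2}^{\lambda}(X)]<\beta_{0}$) the variance-bound route is abandoned entirely and replaced by Pinsker's inequality plus a separate total-variation estimate, again with several sub-cases in $\lambda$. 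So even for $k=2$ your sufficient condition $\psi''\le 1$ on all of $(-\infty,0]$ is not what is proved, and the claim that the tilted variance is ``explicit'' enough to make this routine is not borne out.
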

We have not been able to prove this conjecture but we can prove some
partial results.
\begin{thm}
\label{lokal}For any random variable $X$ with values in $\mathbb{N}_{0}$
and any $k\in\mathbb{N}$ there exists $\varepsilon>0$ such that
for $\mathrm{{E}}\left[C_{k}^{\lambda}\left(X\right)\right]\in\left[-\varepsilon,0\right]$
inequality (\ref{conjecture}) holds. \end{thm}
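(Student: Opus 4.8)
The plan is to reduce the problem, via an information--projection argument, to the one--parameter exponential family obtained by tilting $\mathrm{Po}\left(\lambda\right)$ by $C_{k}^{\lambda}$, and then to read off the desired inequality from Lemma~\ref{LemmaNedreEpsilon}. Throughout we may assume $D\left(X\Vert\mathrm{Po}\left(\lambda\right)\right)<\infty$, since otherwise (\ref{conjecture}) is trivial, and we may assume $h:=\mathrm{E}\left[C_{k}^{\lambda}\left(X\right)\right]<0$, the case $h=0$ being trivial as well.

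First I would introduce the exponential family $\beta\mapsto Q_{\beta}$ with $\mathrm{d}Q_{\beta}/\mathrm{d}\mathrm{Po}\left(\lambda\right)\propto\exp\left(\beta\,C_{k}^{\lambda}\left(x\right)\right)$. Since $C_{k}^{\lambda}$ is a polynomial of degree $k$ with positive leading coefficient, $C_{k}^{\lambda}\left(x\right)\to+\infty$ as $x\to\infty$; hence the partition function is finite for every $\beta\leq0$, the map $\beta\mapsto\mu_{\beta}:=\mathrm{E}_{Q_{\beta}}\left[C_{k}^{\lambda}\right]$ is continuous and strictly increasing with $\mu_{0}=0$, and $\mu_{\beta}$ decreases to $\min_{x\in\mathbb{N}_{0}}C_{k}^{\lambda}\left(x\right)<0$ as $\beta\to-\infty$. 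Consequently, provided $h$ is small enough that $h>\min_{x}C_{k}^{\lambda}\left(x\right)$, there is a unique $\beta\left(h\right)\leq0$ with $\mu_{\beta\left(h\right)}=h$; write $Q^{h}:=Q_{\beta\left(h\right)}$. By the tightness argument used in the proof of Lemma~\ref{LemmaMinLig} (the set $\left\{P:\mathrm{E}_{P}\left[C_{k}^{\lambda}\right]\leq h\right\}$ is tight because $C_{k}^{\lambda}\to\infty$) together with the mixture argument of Case~2 in the proof of Theorem~\ref{thm:mininf copy(1)}, the minimum--information distribution of $\mathrm{Po}\left(\lambda\right)$ over this constraint set exists, lies on the boundary $\mathrm{E}\left[C_{k}^{\lambda}\right]=h$, and is exactly $Q^{h}$. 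Since the law of $X$ satisfies that constraint, the Pythagorean inequality for information divergence \cite{Csiszar1975} gives
\[
D\left(X\Vert\mathrm{Po}\left(\lambda\right)\right)\geq D\left(Q^{h}\Vert\mathrm{Po}\left(\lambda\right)\right).
\]

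It then remains to bound $D\left(Q^{h}\Vert\mathrm{Po}\left(\lambda\right)\right)$ from below. In the family $\beta\mapsto Q_{\beta}$ the sufficient statistic is $C_{k}^{\lambda}\left(X\right)$, which under the base measure $\mathrm{Po}\left(\lambda\right)$ has mean $0$ and variance $1$ (the Poisson--Charlier polynomials being the normalized orthogonal polynomials of $\mathrm{Po}\left(\lambda\right)$) and third moment $\mathrm{E}_{\mathrm{Po}\left(\lambda\right)}\left[C_{k}^{\lambda}\left(X\right)^{3}\right]>0$ by Lemma~\ref{LemmaKoklpos}. Hence Lemma~\ref{LemmaNedreEpsilon} applies and furnishes an $\varepsilon>0$ (which we also take small enough for the previous paragraph) such that the signed log--likelihood $G$ of this family satisfies $G\left(\mu\right)\leq\mu$ for all $\mu$ in $\left[-\varepsilon,\varepsilon\right]$. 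For $h\in\left[-\varepsilon,0\right]$ we have $\mu_{0}=0$, so $G\left(h\right)=-\left(2D\left(Q^{h}\Vert\mathrm{Po}\left(\lambda\right)\right)\right)^{\nicefrac{1}{2}}\leq0$ and also $G\left(h\right)\leq h\leq0$; squaring this inequality between two non-positive numbers yields $2D\left(Q^{h}\Vert\mathrm{Po}\left(\lambda\right)\right)=G\left(h\right)^{2}\geq h^{2}$. Combined with the previous display this gives
\[
D\left(X\Vert\mathrm{Po}\left(\lambda\right)\right)\geq\frac{h^{2}}{2}=\frac{\mathrm{E}\left[C_{k}^{\lambda}\left(X\right)\right]^{2}}{2},
\]
which is (\ref{conjecture}).

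The main obstacle is the reduction step: one has to be confident that the minimum--information distribution under the single constraint $\mathrm{E}\left[C_{k}^{\lambda}\right]=h$ exists and coincides with the exponential--family member $Q^{h}$. The tools of Section~\ref{sec:mininf} are tailored to exactly this, although a little care is needed here because only the Poisson--Charlier moment of order $k$ is fixed while the lower ones are free --- so $Q^{h}$ need not have mean $\lambda$. This is harmless, since Lemma~\ref{LemmaNedreEpsilon} is formulated for an arbitrary exponential family with the normalization $\mu_{0}=0$, $V\left(0\right)=1$ at the base point, which is precisely what the normalized Poisson--Charlier polynomial supplies. Finally, the restriction to a neighborhood $\left[-\varepsilon,0\right]$ of $0$ (rather than all $h\leq0$, which is what Conjecture~\ref{KonjMindre} would require) is exactly what Lemma~\ref{LemmaNedreEpsilon} permits: it only guarantees $G\left(\mu\right)\leq\mu$ locally, because the variance function of the tilted family can fail to remain increasing away from $0$.
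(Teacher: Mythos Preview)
Your proof is correct and follows essentially the same route as the paper: introduce the exponential family based on $\mathrm{Po}(\lambda)$ with $C_k^\lambda$ as sufficient statistic, reduce to this family by information projection, and then apply Lemma~\ref{LemmaNedreEpsilon} together with the positivity of $\mathrm{E}_{\mathrm{Po}(\lambda)}\bigl[C_k^\lambda(X)^3\bigr]$ from Lemma~\ref{LemmaKoklpos}. The paper's version is terser---it simply asserts that the exponential-family member minimizes divergence under the constraint $\mathrm{E}[C_k^\lambda(X)]=c$---whereas you spell out the projection step via the tightness and mixture arguments of Section~\ref{sec:mininf}, but the substance is the same.
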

\begin{proof}
Let $Z\left(\beta\right)$ denote the partition function of the exponential
family based on $\mathrm{{Po}}\left(\lambda\right)$ with $C_{k}^{\lambda}\left(X\right)$
as sufficient statistics. The function $x\rightarrow C_{k}^{\lambda}\left(x\right)$
is lower bounded so for any $\beta\leq0$ the partition function $Z\left(b\right)$
is finite. Therefore for any $c\in\left]\min C_{k}^{\lambda}\left(x\right),0\right]$
there exists an element in the exponential family with $c$ as mean
value. This distribution minimize information divergence under the
constraint $\mathrm{{E}}\left[C_{k}^{\lambda}\left(X\right)\right]=c.$
Therefore we can use \ref{LemmaNedreEpsilon} with $X$ replaced by
$C_{k}^{\lambda}\left(X\right)$ and we just need to prove that $\mathrm{{E}}\left[C_{k}^{\lambda}\left(X\right)^{3}\right]>0,$
which is done in Lemma \ref{LemmaKoklpos} below. 
\end{proof}
Conjecture \ref{KonjMindre} can be proved for $k=2.$ The proof is
quite long in order to cover all cases so the main part of the proof
is postponed to the appendix.
\begin{thm}
\label{nedregraense}For any random variable $X$ with values in $\mathbb{N}_{0}$
the inequality (\ref{conjecture}) holds for $k=2$ if $\mathrm{{E}}\left[C_{2}^{\lambda}\left(X\right)\right]\leq0.$ \end{thm}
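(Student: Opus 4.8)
The plan is to reduce (\ref{conjecture}) for $k=2$ to a one-dimensional inequality inside an exponential family, and that in turn to a monotonicity property of a variance function. Write $c=\mathrm{E}[C_2^\lambda(X)]$. If $c=0$ the bound is trivial, and if $D(X\Vert\mathrm{Po}(\lambda))=\infty$ there is nothing to prove, so assume $c<0$ and $D(X\Vert\mathrm{Po}(\lambda))<\infty$; since $C_2^\lambda(x)/x^2\to 1/(\sqrt{2}\,\lambda)>0$ and $C_2^\lambda$ is bounded below, finiteness of $c$ already forces $\mathrm{E}[X^2]<\infty$, and $c\ge \mu_{\min}:=\min_{k\in\mathbb{N}_0}C_2^\lambda(k)$. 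The only moment that is constrained here is the second Poisson--Charlier moment, so the relevant family is the one-parameter exponential family $\{Q_\beta\}$ based at $Q_0=\mathrm{Po}(\lambda)$ with $C_2^\lambda(X)$ alone as sufficient statistic, $\mathrm{d}Q_\beta/\mathrm{d}\mathrm{Po}(\lambda)=\exp(\beta\,C_2^\lambda(x))/Z(\beta)$; because $C_2^\lambda(x)$ grows quadratically, $Z(\beta)<\infty$ exactly for $\beta\le 0$. For any $\beta\le 0$, nonnegativity of $D(P\Vert Q_\beta)$ gives $D(P\Vert\mathrm{Po}(\lambda))\ge\beta\,\mathrm{E}_P[C_2^\lambda(X)]-\ln Z(\beta)$, and optimising this over $\beta\le 0$ at the value for which $\mu_\beta:=\mathrm{E}_{Q_\beta}[C_2^\lambda(X)]=c$ (which exists, with $\beta<0$, whenever $c\in(\mu_{\min},0)$) yields $D(X\Vert\mathrm{Po}(\lambda))\ge D(Q_\beta\Vert Q_0)$ for that $\beta$. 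The residual boundary case $c=\mu_{\min}$, where every admissible law is concentrated on the one or two integers minimising $C_2^\lambda$, I would treat separately by estimating $D(\cdot\Vert\mathrm{Po}(\lambda))$ directly; the existence and attainment facts used here are exactly of the type established in Section~\ref{sec:mininf}.

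It then remains to show $D(Q_\beta\Vert Q_0)\ge c^2/2$ when $c=\mu_\beta\le 0$. Apply Proposition~\ref{prop:SignedLogLikelihood} to $\{Q_\beta\}$, noting that the base mean is $\mu_0=\mathrm{E}_{\mathrm{Po}(\lambda)}[C_2^\lambda(X)]=0$ by orthonormality of the Poisson--Charlier polynomials: for some $\eta$ between $c$ and $0$,
\[
2\,D(Q_\beta\Vert Q_0)=G(c)^2=\frac{c^2}{V(\eta)},
\]
where $V$ is the variance function of $\{Q_\beta\}$. Hence it suffices to prove $V(\eta)\le 1$ for every $\eta\in(\mu_{\min},0]$. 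Since $V(0)=\mathrm{Var}_{\mathrm{Po}(\lambda)}(C_2^\lambda(X))=1$, it is enough that $V$ be nondecreasing on $(\mu_{\min},0]$, and by the computation in the proof of Lemma~\ref{LemmaNedreEpsilon} (with the sufficient statistic $C_2^\lambda(X)$ in place of $X$) this is equivalent to the statement that the third central moment $\mathrm{E}_{Q_\beta}\big[(C_2^\lambda(X)-\mu_\beta)^3\big]$ is nonnegative for all $\beta\le 0$. At $\beta=0$ this is $\mathrm{E}_{\mathrm{Po}(\lambda)}[C_2^\lambda(X)^3]>0$, which is Lemma~\ref{LemmaKoklpos} and is exactly what powers Theorem~\ref{lokal}; the task is to propagate positivity along the whole half-line $\beta\le 0$.

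This propagation is the main obstacle, and it is the part postponed to the appendix. The laws $Q_\beta$ form a ``Gaussian-tilted Poisson'' family with no elementary closed form, so the sign of the third central moment of $C_2^\lambda(X)$ must be established by explicit computation. The natural machinery is the Poisson--Charlier multiplication formulas: Lemma~\ref{Khokhlovform} and Lemma~\ref{LemmaEfterKhoklov} expand $(C_2^\lambda)^2$ and $(C_2^\lambda)^3$ in the Poisson--Charlier basis, and, combined with the cumulant relations for $\{Q_\beta\}$ (the third central moment of the sufficient statistic equals $V(\mu_\beta)\,V'(\mu_\beta)$, so the sign is that of $V'$), this should turn the required positivity into an explicit inequality in $\mu$ --- equivalently in $V=V(\mu)$ --- and $\lambda$. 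I expect the verification to be organised by regimes: $\mu$ close to $0$, handled by the local argument above via Lemma~\ref{LemmaNedreEpsilon}; $\mu$ close to the endpoint $\mu_{\min}$, where the extremal law sits on one or two integers and the divergence is estimated by hand; and a further split according to the size of $\lambda$, small $\lambda$ (where $\mathrm{Po}(\lambda)$ is concentrated near the origin) behaving differently from large $\lambda$ (where $C_2^\lambda$ is essentially a Hermite polynomial of a near-Gaussian variable). Each regime then reduces to a finite algebraic check; these are lengthy, which is why the full argument is relegated to the appendix.
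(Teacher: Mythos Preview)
Your reduction is the paper's own: project onto the one-parameter exponential family with sufficient statistic $C_2^\lambda$, then use Proposition~\ref{prop:SignedLogLikelihood} to reduce the inequality to $V(\eta)\le 1$ along that family. Where you and the paper diverge is in how this last bound is obtained, and the paper's route is substantially different from your plan of proving that $V$ is nondecreasing (equivalently that the third central moment of $C_2^\lambda$ stays nonnegative for every $\beta\le 0$).

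The paper never establishes monotonicity of $V$. Instead it introduces a threshold $\beta_0<0$ defined by $\beta_0^2 e^{\beta_0^2}=1$ and splits into two cases. For $c:=\mathrm{E}[C_2^\lambda(X)]\in[\beta_0,0]$ it argues via $\mathrm{d}D/\mathrm{d}t=\hat\beta(t)$ and, after observing that $\hat\beta(t)\le\beta_0$ is harmless, needs only $\mathrm{Var}_{Q_\beta}(C_2^\lambda)\le 1$ for $\beta\in[\beta_0,0]$. Two tricks finish this: since $Z(\beta)\ge 1$ one has $\mathrm{Var}\le Z''(\beta)$, and $Z''$ is convex in $\beta$, so it suffices to check $Z''(0)=1$ and $Z''(\beta_0)\le 1$. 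The latter is $\mathrm{E}_{\mathrm{Po}(\lambda)}[f(C_2^\lambda(X))]$ with $f(x)=x^2e^{\beta_0 x}$, and the definition of $\beta_0$ makes $f(x)\le 1$ on $[\beta_0,\infty)$; a short case analysis in $\lambda$ (dealing with the single integer $\lceil\lambda\rceil$ where $C_2^\lambda$ may dip below $\beta_0$) completes this half. For $c<\beta_0$ the paper abandons the exponential family entirely and uses Pinsker's inequality, showing $\lVert P-\mathrm{Po}(\lambda)\rVert\ge|c|$ by exploiting that such a small $c$ forces $P$ to be highly concentrated at $\lceil\lambda\rceil$; this is again a case analysis in $\lambda$.

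So your outline is not wrong, but the concrete mechanism you propose---global positivity of the tilted third moment, extracted from Khokhlov-type expansions---is not what the paper does and is not obviously tractable; the paper's proof rests on the $Z\ge 1$/convexity trick together with the carefully chosen constant $\beta_0$, and on Pinsker for the extreme range. If you pursue your route you would need an independent argument for the third-moment sign for all $\beta\le 0$, which the paper neither proves nor needs.
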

\begin{rem}
Note that there is no assumption on the mean of $X$ in this theorem.
Theorem \ref{ThmNedreVar} is a reformulation of Theorem \ref{nedregraense}
under the extra assumption that $\mathrm{{E}}\left[X\right]=\lambda.$ 

As in Lemma \ref{LemmaNedreEpsilon}, we want to demonstrate that
$\mathrm{Var}\left(X_{\beta}\right)\leq1.$ Since the variance is
\[
\frac{\mathrm{d}^{2}}{\mathrm{d}\beta^{2}}\ln\left(Z\left(\beta\right)\right)=\frac{Z^{\prime\prime}\left(\beta\right)Z\left(\beta\right)-\left(Z^{\prime}\left(\beta\right)\right)^{2}}{\left(Z\left(\beta\right)\right)^{2}}<\frac{Z^{\prime\prime}\left(\beta\right)}{Z\left(\beta\right)},
\]
we will prove the theorem by giving bounds on $Z^{\prime\prime}\left(\beta\right).$ \end{rem}
\begin{proof}
Define $\beta_{0}$ as the negative solution to 
\[
\beta^{2}\exp\left(\beta^{2}\right)=1.
\]
 Numerical calculations give $\beta_{0}=\textrm{-}0.753\,09\ .$ We
observe that $\beta_{0}$ is slightly less that $\textrm{-}2^{\nicefrac{\textrm{-}1}{2}}$
and divide the proof in two cases depending on the value of $\mathrm{{E}}\left[C_{2}^{\lambda}\left(X\right)\right]$
compared with $\beta_{0}.$ The minimum of the function $C_{2}^{\lambda}\left(x\right)$
with $\mathbb{R}$ as domain is $\textrm{-}2^{\nicefrac{\textrm{-}1}{2}}-1/(4\cdot2^{\nicefrac{1}{2}}\cdot\lambda).$
Hence $C_{2}^{\lambda}\left(x\right)\geq\beta_{0}$ if $\lambda\geq3.\,844\,4$
and $\mathrm{{E}}\left[C_{2}^{\lambda}\left(X\right)\right]\geq\beta_{0}.$
Note also that $\mathrm{{E}}\left[C_{2}^{\lambda}\left(X\right)\right]\geq C_{2}^{\lambda}\left(\mathrm{{E}}\left[X\right]\right)$
according to Jensen's Inequality. The Poisson-Charlier polynomial
of order 2 satisfies 
\[
C_{2}^{\lambda}\left(\lambda\right)=C_{2}^{\lambda}\left(\lambda+1\right)=\textrm{-}2^{\nicefrac{\textrm{-}1}{2}}
\]
so $C_{2}^{\lambda}\left(x\right)\geq\textrm{-}2^{\nicefrac{\textrm{-}1}{2}}$
for $x\leq\lambda$ and for $x\geq\lambda+1.$ Hence $\mathrm{{E}}\left[C_{2}^{\lambda}\left(X\right)\right]\geq\textrm{-}2^{\nicefrac{\textrm{-}1}{2}}$
if $\mathrm{{E}}\left[X\right]\leq\lambda$ or if $\mathrm{{E}}\left[X\right]\geq\lambda+1.$
We see $\mathrm{{E}}\left[C_{2}^{\lambda}\left(X\right)\right]\geq\beta_{0}$
except if $X$ is very concentrated near $\lambda.$

\textbf{Case 1 where} $\mathrm{{E}}\left[C_{2}^{\lambda}\left(X\right)\right]\geq\beta_{0}.$
Let ${Po}\left(\lambda\right)_{\beta}$ be the exponential family
based on ${Po}\left(\lambda\right)$ with $C_{2}^{\lambda}\left(x\right)$
as sufficient statistic and partition functions
\[
Z\left(\beta\right)=\sum_{x=0}^{\infty}\exp\left(\beta C_{2}^{\lambda}\left(x\right)\right)\mathrm{{Po}}\left(\lambda,x\right).
\]
Let ${Po}\left(\lambda\right)^{t}$ denote the element in this exponential
family with mean $t.$ We want to prove that 
\[
D\left(\left.{Po}\left(\lambda\right)^{t}\right\Vert {Po}\left(\lambda\right)\right)\geq\frac{1}{2}t^{2}
\]
 for $t\in\left[\beta_{0},0\right].$ The inequality is obvious for
$t=0$ is is sufficient to prove that
\[
\frac{\mathrm{d}D}{\mathrm{d}t}\leq t
\]
 for $t\in\left[\beta_{0},0\right].$ As in the proof of Lemma \ref{PeterG}
we have $\frac{\mathrm{d}D}{\mathrm{d}t}=\hat{\beta}\left(t\right)$
so is is sufficient to prove that $\hat{\beta}\left(t\right)\leq t$
for $t\in\left[\beta_{0},0\right].$ If $\hat{\beta}\left(t\right)\leq\beta_{0}$
this is obvious so we just have to prove that 
\[
\beta\leq t_{\beta}=\frac{Z'\left(\beta\right)}{Z\left(\beta\right)}
\]
 for $\beta\in\left[\beta_{0},0\right].$ This inequality is fulfilled
for $\beta=0$ so we differentiate once more and we just have to prove
that 
\[
1\geq\frac{Z\left(\beta\right)Z''\left(\beta\right)-\left(Z'\left(\beta\right)\right)^{2}}{\left(Z\left(\beta\right)\right)^{2}}
\]

We observe that $Z\left(0\right)=1$ and that $Z\left(\beta\right)>0.$
Since $Z^{\prime}\left(0\right)=0$ and $Z^{\prime\prime}\left(\beta\right)>0$
we have $Z\left(\beta\right)\geq1$ for all values of $\beta.$ Therefore
is sufficient to prove that $1\geq Z''\left(\beta\right)$ for $\beta\in\left[\beta_{0},0\right].$
The function 
\[
Z''\left(\beta\right)=\sum_{x=0}^{\infty}C_{2}^{\lambda}\left(x\right)^{2}\exp\left(\beta C_{2}^{\lambda}\left(x\right)\right)\mathrm{{Po}}\left(\lambda,x\right)
\]
 is convex, so it is sufficient to prove the inequality $\frac{\mathrm{d}^{2}Z}{\mathrm{d}\beta^{2}}\leq1$
for $\beta=0$ and $\beta=\beta_{0}.$ Consider the function $f\left(x\right)=x^{2}\exp\left(\beta_{0}x\right)$
with $f^{\prime}\left(x\right)=\left(2+\beta_{0}x\right)x\exp\left(\beta_{0}x\right).$
The function $f$ is decreasing for $x\leq0,$ has minimum for $x=0,$
is increasing for $0\leq x\leq\nicefrac{\textrm{-}2}{\beta_{0}},$
has a local maximum $4\exp\left(\textrm{-}2\right)/\beta_{0}^{2}$
in $x=\nicefrac{\textrm{-}2}{\beta_{0}},$ and decreasing for $x\geq\nicefrac{\textrm{-}2}{\beta_{0}}.$
Then the local maximum at $x=\nicefrac{\textrm{-}2}{\beta_{0}}$ has
value $0.954\,51<1.$ Hence $f\left(x\right)\leq1$ for $x\geq\beta_{0}.$
The minimum of the function $C_{2}^{\lambda}\left(x\right)$ with
$\mathbb{R}$ as domain is $\textrm{-}2^{\nicefrac{-1}{2}}-1/(4\cdot2^{\nicefrac{1}{2}}\cdot\lambda).$
Hence $C_{2}^{\lambda}\left(x\right)\geq\beta_{0}$ if $\lambda\geq3.\,844\,4$
so for such $\lambda,$ we have $f\left(C_{2}^{\lambda}\left(x\right)\right)\leq1$
for all $x.$ 
\begin{figure}[h]
\begin{centering}
\begin{pgfpicture}{19.23mm}{4.35mm}{104.90mm}{66.26mm} 
\pgfsetxvec{\pgfpoint{1.00mm}{0mm}} 
\pgfsetyvec{\pgfpoint{0mm}{1.00mm}} 
\color[rgb]{0,0,0}\pgfsetlinewidth{0.30mm}\pgfsetdash{}{0mm} \pgfmoveto{\pgfxy(34.14,14.67)}\pgflineto{\pgfxy(34.14,64.26)}\pgfstroke \pgfmoveto{\pgfxy(34.14,64.26)}\pgflineto{\pgfxy(33.44,61.46)}\pgflineto{\pgfxy(34.14,62.86)}\pgflineto{\pgfxy(34.84,61.46)}\pgflineto{\pgfxy(34.14,64.26)}\pgfclosepath\pgffill \pgfmoveto{\pgfxy(34.14,64.26)}\pgflineto{\pgfxy(33.44,61.46)}\pgflineto{\pgfxy(34.14,62.86)}\pgflineto{\pgfxy(34.84,61.46)}\pgflineto{\pgfxy(34.14,64.26)}\pgfclosepath\pgfstroke 
\pgfmoveto{\pgfxy(21.23,15.28)}\pgflineto{\pgfxy(102.90,15.28)}\pgfstroke \pgfmoveto{\pgfxy(102.90,15.28)}\pgflineto{\pgfxy(100.10,15.98)}\pgflineto{\pgfxy(101.50,15.28)}\pgflineto{\pgfxy(100.10,14.58)}\pgflineto{\pgfxy(102.90,15.28)}\pgfclosepath\pgffill \pgfmoveto{\pgfxy(102.90,15.28)}\pgflineto{\pgfxy(100.10,15.98)}\pgflineto{\pgfxy(101.50,15.28)}\pgflineto{\pgfxy(100.10,14.58)}\pgflineto{\pgfxy(102.90,15.28)}\pgfclosepath\pgfstroke 
\pgfputat{\pgfxy(94.09,11.32)}{\pgfbox[bottom,left]{5}} \pgfsetlinewidth{0.12mm}\pgfmoveto{\pgfxy(94.76,15.87)}\pgflineto{\pgfxy(94.76,14.66)}\pgfstroke \pgfmoveto{\pgfxy(82.67,15.87)}\pgflineto{\pgfxy(82.67,14.66)}\pgfstroke \pgfmoveto{\pgfxy(58.44,15.87)}\pgflineto{\pgfxy(58.44,14.66)}\pgfstroke 
\pgfputat{\pgfxy(45.14,11.23)}{\pgfbox[bottom,left]{1}} 
\pgfmoveto{\pgfxy(22.13,16.03)}\pgflineto{\pgfxy(22.13,14.73)}\pgfstroke \pgfmoveto{\pgfxy(25.65,15.87)}\pgflineto{\pgfxy(25.65,14.66)}\pgfstroke 
\pgfputat{\pgfxy(30.70,57.97)}{\pgfbox[bottom,left]{2}} 
\pgfmoveto{\pgfxy(34.71,59.04)}\pgflineto{\pgfxy(33.50,59.04)}\pgfstroke 
\pgfputat{\pgfxy(28.77,47.03)}{\pgfbox[bottom,left]{1.5}} \pgfmoveto{\pgfxy(34.71,48.10)}\pgflineto{\pgfxy(33.50,48.10)}\pgfstroke 
\pgfputat{\pgfxy(32.02,33.97)}{\pgfbox[bottom,left]{1}} 
\pgfputat{\pgfxy(28.77,25.15)}{\pgfbox[bottom,left]{0.5}} \pgfmoveto{\pgfxy(34.71,26.22)}\pgflineto{\pgfxy(33.50,26.22)}\pgfstroke \pgfmoveto{\pgfxy(34.71,15.27)}\pgflineto{\pgfxy(33.50,15.27)}\pgfstroke \pgfsetlinewidth{0.18mm}\pgfmoveto{\pgfxy(33.74,15.29)}\pgflineto{\pgfxy(34.48,15.29)}\pgfstroke \pgfsetlinewidth{0.30mm}\pgfmoveto{\pgfxy(21.98,61.75)}\pgfcurveto{\pgfxy(22.16,59.93)}{\pgfxy(22.53,56.28)}{\pgfxy(22.72,54.46)}\pgfcurveto{\pgfxy(22.90,52.85)}{\pgfxy(23.27,49.64)}{\pgfxy(23.45,48.03)}\pgfcurveto{\pgfxy(23.88,44.56)}{\pgfxy(24.38,40.94)}{\pgfxy(24.92,37.50)}\pgfcurveto{\pgfxy(25.97,31.02)}{\pgfxy(27.05,24.27)}{\pgfxy(30.06,18.40)}\pgfcurveto{\pgfxy(30.74,17.23)}{\pgfxy(31.69,15.94)}{\pgfxy(33.00,15.46)}\pgfcurveto{\pgfxy(34.53,14.91)}{\pgfxy(36.20,15.70)}{\pgfxy(37.42,16.60)}\pgfcurveto{\pgfxy(42.51,20.77)}{\pgfxy(46.26,26.31)}{\pgfxy(51.39,30.46)}\pgfcurveto{\pgfxy(56.93,34.96)}{\pgfxy(63.41,36.90)}{\pgfxy(70.50,35.84)}\pgfcurveto{\pgfxy(78.88,34.60)}{\pgfxy(86.88,30.91)}{\pgfxy(94.76,27.94)}\pgfstroke \color[rgb]{1,0,0}\pgfsetdash{{2.00mm}{1.00mm}}{0mm}\pgfmoveto{\pgfxy(21.98,37.16)}\pgflineto{\pgfxy(99.88,37.16)}\pgfstroke \color[rgb]{0,0,0}
\pgfputat{\pgfxy(35.88,61.05)}{\pgfbox[bottom,left]{$f(x)$}} 
\pgfputat{\pgfxy(100.81,18.10)}{\pgfbox[bottom,left]{$x$}} \pgfmoveto{\pgfxy(24.91,37.16)}\pgflineto{\pgfxy(24.91,14.74)}\pgfstroke 
\pgfputat{\pgfxy(22.79,11.70)}{\pgfbox[bottom,left]{$\beta_{0}$}} \pgfmoveto{\pgfxy(66.78,36.07)}\pgflineto{\pgfxy(66.61,15.30)}\pgfstroke 
\pgfputat{\pgfxy(62.86,11.67)}{\pgfbox[bottom,left]{$\textrm{-}\frac{2}{\beta_{0}}$}} \pgfsetdash{}{0mm}\pgfsetlinewidth{0.12mm}\pgfmoveto{\pgfxy(70.53,15.87)}\pgflineto{\pgfxy(70.53,14.66)}\pgfstroke \pgfmoveto{\pgfxy(46.26,16.03)}\pgflineto{\pgfxy(46.26,14.73)}\pgfstroke 
\pgfputat{\pgfxy(29.22,6.94)}{\pgfbox[bottom,left]{ $\textrm{-}2^{\textrm{-}\nicefrac{1}{2}}$}} 
\pgfsetlinewidth{0.15mm}\pgfmoveto{\pgfxy(30.47,9.40)}\pgflineto{\pgfxy(25.72,14.62)}\pgfstroke \pgfmoveto{\pgfxy(25.72,14.62)}\pgflineto{\pgfxy(27.08,12.08)}\pgflineto{\pgfxy(25.72,14.62)}\pgflineto{\pgfxy(28.12,13.02)}\pgflineto{\pgfxy(25.72,14.62)}\pgfclosepath\pgffill \pgfmoveto{\pgfxy(25.72,14.62)}\pgflineto{\pgfxy(27.08,12.08)}\pgflineto{\pgfxy(25.72,14.62)}\pgflineto{\pgfxy(28.12,13.02)}\pgflineto{\pgfxy(25.72,14.62)}\pgfclosepath\pgfstroke 
\end{pgfpicture}%
\par\end{centering}

\protect\caption{Plot of the function $f\left(x\right)=x^{2}\exp\left(\beta_{0}x\right).$}
\end{figure}

The graph of $x\rightarrow C_{2}^{\lambda}\left(x\right)$ is a parabola
and we have $C_{2}^{\lambda}\left(\lambda\right)=C_{2}^{\lambda}\left(\lambda+1\right)=\textrm{-}2^{\nicefrac{\textrm{-}1}{2}}.$
For all values $x\not\in\left]\lambda,\lambda+1\right[$ we have $f\left(C_{2}^{\lambda}\left(x\right)\right)\leq1.$
Since $x$ can only assume integer values only $x=\left\lceil \lambda\right\rceil $
will contribute to the mean value with a value greater than 1. A careful
inspection of different cases will show that although $x=\left\lceil \lambda\right\rceil $
will contribute to the mean with a value $f\left(x\right)>1,$ this
contribution will be averaged out with some other value of $x.$ The
details can be found in the appendix.

\textbf{Case 2 where} $\mathrm{{E}}\left[C_{2}^{\lambda}\left(X\right)\right]<\beta_{0}.$
In this case the distribution $P$ of $X$ is strongly concentrated
near $\lambda+\nicefrac{1}{2}$ where the minimum of $C_{2}^{\lambda}\left(x\right)$
is attained. According to Pinsker's inequality
\[
D\left(P\Vert\mathrm{Po}\left(\lambda\right)\right)\geq\frac{1}{2}\left\Vert P-\mathrm{Po}\left(\lambda\right)\right\Vert ^{2}
\]
 so it is sufficient to prove that 
\begin{equation}
\left\Vert P-\mathrm{Po}\left(\lambda\right)\right\Vert \geq\left\vert \mathrm{E}\left[C_{2}^{\lambda}\left(X\right)\right]\right\vert .\label{eqn:total}
\end{equation}
Again, we have to divide in a number of cases. This is done in the
appendix. 
\end{proof}

\section{Asymptotic lower bounds\label{sec:lb}}

This section combines results from Section \ref{SecMoment}, \ref{sec:mininf},
and \ref{SecLower}.

We now present lower bounds on the rate of convergence in the Law
of Thin Numbers in the sense of information divergence. The key idea
is that we bound $D(P\Vert\mathrm{{Po}}\left(\lambda\right))\geq D(P^{\ast}\Vert\mathrm{{Po}}\left(\lambda\right))$,
where $P^{\ast}$ is the minimum information distribution in a class
containing $P$. Using the construction for $P^{\ast}$ found in Section
\ref{sec:mininf}, we can find an explicit expression for the right
hand side
\begin{thm}
\label{thm:dlb} Let $X$ be a random variable with values in $\mathbb{N}_{0}.$
If $\mathrm{{E}}\left[C_{\kappa}^{\lambda}\left(X\right)\right]\leq0$
then there exists $n_{0}$ such that 
\begin{equation}
n^{2\kappa-2}D\left(\left.\frac{1}{n}\circ\sum_{j=1}^{n}X_{j}\right\Vert \mathrm{{Po}}\left(\lambda\right)\right)\geq\frac{\mathrm{{E}}\left[C_{\kappa}^{\lambda}\left(X\right)\right]^{2}}{2}.\label{lower}
\end{equation}
 for $n\geq n_{0}.$ \end{thm}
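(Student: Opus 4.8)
The plan is to obtain (\ref{lower}) by applying the local inequality Theorem~\ref{lokal} (with $k=\kappa$) to the thinned convolution itself, using Proposition~\ref{PropositionPCDecay} to control the size of its $\kappa$-th Poisson--Charlier moment. Write $P_{n}$ for the distribution of $\frac{1}{n}\circ\sum_{j=1}^{n}X_{j}$ and set $c=\mathrm{E}\left[C_{\kappa}^{\lambda}\left(X\right)\right]$; by the definition of $\kappa$ we have $c\neq0$, and the hypothesis forces $c<0$. I would first dispose of $\kappa=1$: there $P_{n}$ has mean $\mathrm{E}\left[X\right]\leq\lambda$, so the chain of inequalities recorded at the start of Section~\ref{SecLower} already gives $D\left(P_{n}\Vert\mathrm{Po}\left(\lambda\right)\right)\geq\frac{1}{2}\mathrm{E}_{P_{n}}\left[C_{1}^{\lambda}\left(X\right)\right]^{2}=\frac{1}{2}c^{2}$, which is (\ref{lower}) with $2\kappa-2=0$ and $n_{0}=1$. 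From now on assume $\kappa\geq2$.

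Since $X$ has a finite $\kappa$-th moment, so does $\frac{1}{n}\circ\sum_{j=1}^{n}X_{j}$ (by (\ref{Factorial})), so its Poisson--Charlier moments up to order $\kappa$ are defined and Proposition~\ref{PropositionPCDecay} applies, giving $\mathrm{E}_{P_{n}}\left[C_{\kappa}^{\lambda}\left(X\right)\right]=c/n^{\kappa-1}$. Next I would invoke Theorem~\ref{lokal} with $k=\kappa$. Its proof produces a threshold $\varepsilon>0$ that depends only on $\kappa$ and $\lambda$: it is the radius of a neighbourhood of $0$ on which the variance function of the exponential family over $\mathrm{Po}\left(\lambda\right)$ with sufficient statistic $C_{\kappa}^{\lambda}$ is increasing, which comes from Lemma~\ref{LemmaNedreEpsilon} together with the positivity $\mathrm{E}_{\mathrm{Po}\left(\lambda\right)}\left[C_{\kappa}^{\lambda}\left(X\right)^{3}\right]>0$ supplied by Lemma~\ref{LemmaKoklpos}. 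Consequently \emph{every} $\mathbb{N}_{0}$-valued random variable $Y$ with $\mathrm{E}\left[C_{\kappa}^{\lambda}\left(Y\right)\right]\in\left[-\varepsilon,0\right]$ obeys $D\left(Y\Vert\mathrm{Po}\left(\lambda\right)\right)\geq\frac{1}{2}\mathrm{E}\left[C_{\kappa}^{\lambda}\left(Y\right)\right]^{2}$. Since $|c|/n^{\kappa-1}\to0$, taking $n_{0}=\left\lceil\left(|c|/\varepsilon\right)^{1/\left(\kappa-1\right)}\right\rceil$ makes $\mathrm{E}_{P_{n}}\left[C_{\kappa}^{\lambda}\left(X\right)\right]=c/n^{\kappa-1}\in\left[-\varepsilon,0\right]$ for all $n\geq n_{0}$; applying this inequality to $P_{n}$ and multiplying by $n^{2\kappa-2}$ then gives $n^{2\kappa-2}D\left(P_{n}\Vert\mathrm{Po}\left(\lambda\right)\right)\geq\frac{1}{2}n^{2\kappa-2}\left(c/n^{\kappa-1}\right)^{2}=\frac{1}{2}c^{2}$, which is (\ref{lower}).

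I do not expect a genuine obstacle here: once Theorem~\ref{lokal} and Proposition~\ref{PropositionPCDecay} are available the argument is essentially bookkeeping. The one point that needs care is that the threshold $\varepsilon$ in Theorem~\ref{lokal} must be uniform over the distributions it is applied to, hence independent of $n$ --- which is why I would extract it from the single fixed exponential family over $\mathrm{Po}\left(\lambda\right)$ rather than from the individual $P_{n}$; this is automatic from the proof of Theorem~\ref{lokal} but deserves to be stated. I would also add the remark that for $\kappa\leq2$ one can take $n_{0}=1$, using (\ref{foerste}) when $\kappa=1$ and the global Theorem~\ref{nedregraense} when $\kappa=2$, and that a value of $n_{0}$ independent of the law of $X$ valid for \emph{all} $\kappa$ would follow from Conjecture~\ref{KonjMindre}.
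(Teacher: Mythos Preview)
Your proposal is correct and follows essentially the same route as the paper: invoke Theorem~\ref{lokal} at $k=\kappa$ to obtain a threshold $\varepsilon>0$, use Proposition~\ref{PropositionPCDecay} to see that the $\kappa$-th Poisson--Charlier moment of the thinned sum equals $c/n^{\kappa-1}$ and hence eventually falls into $[-\varepsilon,0]$, and then square and clear the factor $n^{2\kappa-2}$. Your separate treatment of $\kappa=1$, the explicit choice of $n_{0}$, and the remarks on uniformity of $\varepsilon$ and on taking $n_{0}=1$ when $\kappa\leq2$ are pleasant additions but not needed for the argument, which the paper dispatches in three lines.
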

\begin{proof}
For $k=\kappa$ there exists $\varepsilon>0$ such that inequality
(\ref{conjecture}) holds when the condition in Theorem \ref{lokal}
is fulfilled. Now, by Proposition \ref{PropositionPCDecay}
\[
\mathrm{{E}}\left[C_{\kappa}^{\lambda}\left(\frac{1}{n}\circ\sum_{j=1}^{n}X_{j}\right)\right]=\frac{\mathrm{{E}}[C_{\kappa}^{\lambda}(X)]}{n^{\kappa-1}}\in\left[-\varepsilon,0\right]
\]
 for sufficiently large values of $n$ implying that 
\[
D\left(\left.\frac{1}{n}\circ\sum_{j=1}^{n}X_{j}\right\Vert \mathrm{{Po}}\left(\lambda\right)\right)\geq\frac{1}{2}\left(\frac{\mathrm{{E}}[C_{\kappa}^{\lambda}(X)]}{n^{\kappa-1}}\right)^{2},
\]
 and the lower bound (\ref{lower}) follows. 
\end{proof}
If the distribution of $X$ is ultra log-concave we automatically
have $\mathrm{{E}}\left[C_{\kappa}^{\lambda}\left(X\right)\right]\leq0,$
and we conjecture that the asymptotic lower bound is tight for ultra
log-concave distributions.

A similar lower bound on rate of convergence can be achieved even
if $\mathrm{{E}}\left[C_{\kappa}^{\lambda}\left(X\right)\right]>0,$
but then it requires the existence of a moment of higher order to
``stabilize\textquotedblright{} the moment of order $\kappa.$ Thus
we shall assume the existence of moments of all orders less than or
equal to $\kappa+1.$
\begin{thm}
If $X_{1},X_{2},\ldots$ is a sequence of independent identically
distributed discrete random variables for which all Poisson-Charlier
moments of order less than $\kappa$ are zero and for which moments
of order up to $\kappa+1$ exists then
\[
\liminf_{n\rightarrow\infty}n^{2\kappa-2}D\left(\left.\frac{1}{n}\circ\sum_{j=1}^{n}X_{j}\right\Vert \mathrm{{Po}}\left(\lambda\right)\right)\geq\frac{\mathrm{{E}}\left[C_{\kappa}^{\lambda}\left(X\right)\right]^{2}}{2}.
\]
\end{thm}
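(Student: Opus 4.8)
The plan is to reduce this theorem to Theorem~\ref{thm:dlb}, which already handles the case $\mathrm{E}\left[C_{\kappa}^{\lambda}(X)\right]\leq0$, by showing that the presence of the extra moment of order $\kappa+1$ lets us control the sign issue asymptotically. Concretely, set $c=\mathrm{E}\left[C_{\kappa}^{\lambda}(X)\right]$ and $d=\mathrm{E}\left[C_{\kappa+1}^{\lambda}(X)\right]$. By Proposition~\ref{PropositionPCDecay}, for $Y_{n}=\frac{1}{n}\circ\sum_{j=1}^{n}X_{j}$ we have $\mathrm{E}\left[C_{\kappa}^{\lambda}(Y_{n})\right]=c/n^{\kappa-1}$ and $\mathrm{E}\left[C_{\kappa+1}^{\lambda}(Y_{n})\right]=d/n^{\kappa}$, while all lower-order Poisson-Charlier moments vanish. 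The case $c\leq0$ is immediate from Theorem~\ref{thm:dlb}, so the work is entirely in the case $c>0$.

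For $c>0$ the idea is to bound $D\left(Y_{n}\Vert\mathrm{Po}(\lambda)\right)$ below by the information divergence of the minimum information distribution $P^{\ast}_{n}$ in the class $C_{n}$ of distributions on $\mathbb{N}_{0}$ with the same first $\kappa-1$ Poisson-Charlier moments (all zero), the same $\kappa$-th moment $c/n^{\kappa-1}$, and $\kappa+1$-st moment $\leq d/n^{\kappa}$. By Lemma~\ref{LemmaMinLig} (with $\ell=\kappa+1$) this minimizer exists once $D\left(C_{n}\Vert\mathrm{Po}(\lambda)\right)<\infty$, which holds for large $n$ since $Y_{n}\in C_{n}$ and $D\left(Y_{n}\Vert\mathrm{Po}(\lambda)\right)\to0$. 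The minimizer $P^{\ast}_{n}$ lies in the exponential family generated by $C_{\kappa}^{\lambda},\ldots,C_{\kappa+1}^{\lambda}$ over $\mathrm{Po}(\lambda)$, with natural parameters $(\beta_{1},\ldots,\beta_{\kappa-1};\beta_{\kappa};\beta_{\kappa+1})$; since the first $\kappa-1$ constraints are zero and $\mathrm{Po}(\lambda)$ already has those moments zero, one expects $\beta_{1}=\cdots=\beta_{\kappa-1}=0$, so $P^{\ast}_{n}$ sits in the two-parameter family $Q_{(\beta_{\kappa},\beta_{\kappa+1})}$. I would then Taylor-expand $\beta_{\kappa}(n),\beta_{\kappa+1}(n)$ in terms of the prescribed moments $c/n^{\kappa-1}$ and $d/n^{\kappa}$ around the origin: because the Gram matrix of $(C_{\kappa}^{\lambda},C_{\kappa+1}^{\lambda})$ under $\mathrm{Po}(\lambda)$ is the identity (orthonormality), to leading order $\beta_{\kappa}(n)\sim c/n^{\kappa-1}$ and $\beta_{\kappa+1}(n)\sim d/n^{\kappa}$, and the Kullback-Leibler divergence $D\left(P^{\ast}_{n}\Vert\mathrm{Po}(\lambda)\right)=\sum_{j}\beta_{j}\mu_{j}-\ln Z(\beta)$ behaves like $\tfrac12(\beta_{\kappa}^{2}+\beta_{\kappa+1}^{2})+o(\cdot)$, i.e. like $\tfrac12 c^{2}/n^{2\kappa-2}+O(1/n^{2\kappa-1})$. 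Multiplying by $n^{2\kappa-2}$ and taking $\liminf$ gives the claimed bound $\mathrm{E}\left[C_{\kappa}^{\lambda}(X)\right]^{2}/2$, the higher-order term being negligible.

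The main obstacle is making the variational lower bound legitimate when $c>0$: one cannot simply use the one-sided class with $\mathrm{E}\left[C_{\kappa}^{\lambda}\right]\leq h_{\kappa}$ because, as Case~1 of Theorem~\ref{thm:mininf copy(1)} shows, no minimizer exists when the top moment is positive and the lower ones vanish — the infimum is $0$. This is exactly why the $\kappa+1$-st moment must be brought in as an equality (or one-sided) constraint to ``stabilize'' the family: with $C_{\kappa+1}^{\lambda}$ present the partition function $\int\exp\left(\beta_{\kappa}C_{\kappa}^{\lambda}+\beta_{\kappa+1}C_{\kappa+1}^{\lambda}\right)\mathrm{d}\,\mathrm{Po}(\lambda)$ is finite for $\beta_{\kappa+1}<0$ regardless of the sign of $\beta_{\kappa}$, since $C_{\kappa+1}^{\lambda}$ grows faster. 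I would therefore take $d/n^{\kappa}$, note that for the true sequence $Y_{n}$ the constraint $\mathrm{E}\left[C_{\kappa+1}^{\lambda}(Y_{n})\right]=d/n^{\kappa}$ (rather than $\leq$) holds, invoke Lemma~\ref{LemmaMinLig} and the existence argument of Theorem~\ref{thm:mininf copy(1)} Case~3 (if $d>0$ one uses a one-sided $\leq$ with a perturbation toward a lower-divergence distribution; if $d<0$ one argues directly), and then do the two-variable asymptotic expansion. The only genuinely technical point is verifying that the map from natural parameters to the moment pair is a local diffeomorphism near $0$ with identity Jacobian — but that is immediate from orthonormality of the Poisson-Charlier polynomials and the formula $\partial^{2}\ln Z/\partial\beta_{i}\partial\beta_{j}=\mathrm{Cov}(C_{i}^{\lambda},C_{j}^{\lambda})$ evaluated at $\beta=0$.
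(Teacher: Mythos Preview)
Your proposal is correct and follows essentially the same route as the paper: project onto the minimum-information distribution in the two-parameter exponential family generated by $C_{\kappa}^{\lambda}$ and $C_{\kappa+1}^{\lambda}$ over $\mathrm{Po}(\lambda)$, then Taylor-expand using the orthonormality of the Poisson--Charlier polynomials (so that the Hessian of $\ln Z$ at the origin is the identity) to extract the leading term $\tfrac{1}{2}c^{2}/n^{2\kappa-2}$. The paper packages the asymptotics slightly differently---via the substitution $t=n^{1-\kappa}$ and a direct computation of $\tfrac{\mathrm{d}^{2}}{\mathrm{d}t^{2}}D(P_{t}^{\ast}\Vert\mathrm{Po}(\lambda))\to\mathrm{E}[C_{\kappa}^{\lambda}(X)]^{2}$---and does not split on the sign of $c$, but the underlying argument is the same, and your discussion of why the $(\kappa+1)$-st moment is needed to guarantee existence of the projection is, if anything, more explicit than the paper's.
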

\begin{proof}
Define $t=n^{1-\kappa}.$ Then
\begin{align*}
\mathrm{{E}}\left[C_{\kappa}^{\lambda}\left(\frac{1}{n}\circ\sum_{j=1}^{n}X_{j}\right)\right] & =t\cdot\mathrm{{E}}\left[C_{\kappa}^{\lambda}\left(X\right)\right]\\
\mathrm{{E}}\left[C_{\kappa+1}^{\lambda}\left(\frac{1}{n}\circ\sum_{j=1}^{n}X_{j}\right)\right] & =t^{\frac{\kappa}{\kappa-1}}\cdot\mathrm{{E}}\left[C_{\kappa+1}^{\lambda}\left(X\right)\right].
\end{align*}
 Let $P_{t}^{\ast}$ denote the minimum information distribution satisfying
\[
\mathrm{{E}}_{P_{t}^{\ast}}\left[C_{\kappa}^{\lambda}\left(X\right)\right]=a\text{ and }\mathrm{{E}}_{P_{t}^{\ast}}\left[C_{\kappa+1}^{\lambda}\left(X\right)\right]=b
\]
 where 
\[
a=t\cdot\mathrm{{E}}\left[C_{\kappa}^{\lambda}\left(X\right)\right]\text{ and }b=t^{\frac{\kappa}{\kappa-1}}\cdot\mathrm{{E}}\left[C_{\kappa+1}^{\lambda}\left(X\right)\right].
\]
 Then $P_{t}^{\ast}$ is an element in the exponential family and
\[
\frac{P_{t}^{\ast}\left(j\right)}{\mathrm{{Po}}\left(\lambda,j\right)}=\frac{\exp\left(\beta_{1}C_{\kappa}^{\lambda}\left(j\right)+\beta_{2}C_{\kappa+1}^{\lambda}\left(j\right)\right)}{Z\left(\beta_{1},\beta_{2}\right)}
\]
 where $Z\left(\beta_{1},\beta_{2}\right)$ is the partition function
and $\beta_{1}$ and $\beta_{2}$ are determined by the conditions.
Thus 
\begin{align*}
 & D\left(P_{t}^{\ast}\Vert\mathrm{{Po}}\left(\lambda\right)\right)\\
\quad & =\sum_{x=0}^{\infty}P_{t}^{\ast}\left(x\right)\ln\frac{\exp\left(\beta_{1}C_{\kappa}^{\lambda}\left(x\right)+\beta_{2}C_{\kappa+1}^{\lambda}\left(x\right)\right)}{Z\left(\beta_{1},\beta_{2}\right)}\\
\quad & =\sum_{x=0}^{\infty}\left(\beta_{1}C_{\kappa}^{\lambda}\left(x\right)+\beta_{2}C_{\kappa+1}^{\lambda}\left(x\right)\right)P_{t}^{\ast}\left(x\right)-\ln Z\left(\beta_{1},\beta_{2}\right)\\
\quad & =\beta_{1}t\mathrm{{E}}\left[C_{\kappa}^{\lambda}\left(X\right)\right]+\beta_{2}t^{\frac{\kappa}{\kappa-1}}\cdot\mathrm{{E}}\left[C_{\kappa+1}^{\lambda}\left(X\right)\right]\\
 & \qquad\qquad\qquad\qquad\qquad\qquad\qquad\qquad\;\;-\ln Z\left(\beta_{1},\beta_{2}\right).
\end{align*}
Therefore 
\[
\frac{\mathrm{d}}{\mathrm{d}t}D\left(P_{t}^{\ast}\Vert\mathrm{{Po}}\left(\lambda\right)\right)=\left(\left.\begin{array}{c}
\frac{da}{dt}\\
\frac{db}{dt}
\end{array}\right\vert \begin{array}{c}
\frac{\partial D}{\partial a}\\
\frac{\partial D}{\partial b}
\end{array}\right)
\]
 where $\left(\cdot\mid\cdot\right)$ denotes the inner product. Thus
\begin{multline*}
\frac{\mathrm{d}^{2}}{\mathrm{d}t^{2}}D\left(P_{t}^{\ast}\Vert\mathrm{{Po}}\left(\lambda\right)\right)\\
=\left(\left.\begin{array}{c}
t\frac{d^{2}a}{dt^{2}}\\
t\frac{d^{2}b}{dt^{2}}
\end{array}\right\vert \begin{array}{c}
t^{-1}\frac{\partial D}{\partial a}\\
t^{-1}\frac{\partial D}{\partial b}
\end{array}\right)+\left(\begin{array}{c}
\frac{da}{dt}\\
\frac{db}{dt}
\end{array}\left\vert \begin{array}{cc}
\frac{\partial^{2}D}{\partial a^{2}} & \frac{\partial^{2}D}{\partial a\partial b}\\
\frac{\partial^{2}D}{\partial b\partial a} & \frac{\partial^{2}D}{\partial b^{2}}
\end{array}\right\vert \begin{array}{c}
\frac{da}{dt}\\
\frac{db}{dt}
\end{array}\right)\\
\rightarrow\mathrm{{E}}\left[C_{\kappa}^{\lambda}\left(X\right)\right]\frac{\partial^{2}D}{\partial a^{2}}\mathrm{{E}}\left[C_{\kappa}\left(X\right)\right]=\mathrm{{E}}\left[C_{\kappa}^{\lambda}\left(X\right)\right]^{2},
\end{multline*}
 where the physics notation $\left(\vec{u}\mid A\mid\vec{v}\right)$
for $\left(\vec{u}\mid A\vec{v}\right)$ is used when $A$ is a matrix.
Hence,
\begin{align*}
\liminf_{n\rightarrow\infty}n^{2\kappa-2}D & \left(\left.\frac{1}{n}\circ\sum_{j=1}^{n}X_{j}\right\Vert \mathrm{{Po}}\left(\lambda\right)\right)\\
 & \geq\liminf t^{-2}D\left(P_{t}^{\ast}\Vert\mathrm{{Po}}\left(\lambda\right)\right)\\
 & \geq\frac{\mathrm{{E}}\left[C_{\kappa}^{\lambda}\left(X\right)\right]^{2}}{2},
\end{align*}
 which proves the theorem. 
\end{proof}

\section{Discrete and continuous distributions\label{sec:disccont}}

The $\alpha$-thinning $\alpha\circ P$ of a distribution $P$ on
$\mathbb{N}_{0}$ is also a distribution on $\mathbb{N}_{0}$. We
can extend the thinning operation for distributions $P$ of random
variables $Y$ on $\mathbb{N}_{0}/n=\{0,\frac{1}{n},\frac{2}{n}\ldots\},$
by letting $\alpha\circ P$ be the distribution of $\frac{1}{n}\sum_{j=1}^{nY}B_{j}$,
where the $B_{j}$ are as before. More generally, starting with a
random variable $Y$ with distribution $P$ on $[0,\infty)$, let
$P_{n}$ denote the distribution on $\mathbb{N}_{0}/n$ with $P_{n}\left(\nicefrac{j}{n}\right)=P\left(\left[\nicefrac{j}{n},\nicefrac{j}{n}+1\right[\right).$
It is easy to see that $\alpha\circ P_{n}$ converges to the distribution
of $\alpha Y$ as $n\rightarrow\infty$. In this sense, thinning can
be interpreted as a discrete analog of the scaling operation for continuous
random variables.

Let $\Phi\left(\mu,\sigma^{2}\right)$ denote the distribution of
a Gaussian random variable with mean $\mu$ and variance $\sigma^{2}.$
We are interested in a lower bound on $D\left(X\Vert\Phi\left(\mu,\sigma^{2}\right)\right)$
in terms of the variance of $X,$ where $X$ is some random variable.
We shall assume that $\mathrm{{Var}}\left(X\right)\leq\sigma^{2}.$
First we remark that
\[
D\left(X\Vert\Phi\left(\mu,\sigma^{2}\right)\right)=D\left(aX+b\Vert\Phi\left(a\mu+b,a^{2}\sigma^{2}\right)\right)
\]
 for real constants $a$ and $b.$ The constants $a$ and $b$ can
be chosen so that $a\mu+b=a^{2}\sigma^{2}.$ Our next step is to discretize
\begin{multline}
D\left(aX+b\Vert\Phi\left(a\mu+b,a^{2}\sigma^{2}\right)\right)\\
\approx D\left(\left.\left\lfloor aX+b\right\rfloor \right\Vert \mathrm{{Po}}\left(a^{2}\sigma^{2}\right)\right).
\end{multline}
 Next we use Theorem \ref{nedregraense} to get 
\begin{multline*}
D\left(\left.\left\lfloor aX+b\right\rfloor \right\Vert \mathrm{{Po}}\left(a^{2}\sigma^{2}\right)\right)\geq\frac{\mathrm{{E}}\left[C_{2}^{a^{2}\sigma^{2}}\left(\left\lfloor aX+b\right\rfloor \right)\right]^{2}}{2}=\\
\frac{1}{4}\left(\frac{\mathrm{{Var}}\left\lfloor aX+b\right\rfloor }{a^{2}\sigma^{2}}-1\right)^{2}=\frac{1}{4}\left(\frac{\mathrm{{Var}}\left(\frac{\left\lfloor aX+b\right\rfloor }{a}\right)}{\sigma^{2}}-1\right)^{2}.
\end{multline*}
 Finally we use that $\mathrm{{Var}}\left(\left\lfloor aX+b\right\rfloor /a\right)\rightarrow\mathrm{{Var}}\left(X\right)$
for $n\rightarrow\infty$ to get
\begin{align*}
D\left(X\Vert\Phi\left(\mu,\sigma^{2}\right)\right) & \geq\frac{\left(\frac{\mathrm{{Var}}\left(X\right)}{\sigma^{2}}-1\right)^{2}}{4}\\
 & =\frac{\mathrm{{E}}\left[H_{2}\left(X-\mathrm{{E}}\left[X\right]\right)\right]^{2}}{2},
\end{align*}
where $H_{2}$ is the second Hermite polynomial. This inequality can
also be proved by a straightforward calculation in the exponential
family of Gaussian distributions. Following the same kind of reasoning
we get the following new and non-trivial result:
\begin{thm}
\label{Thm14}For any random variable $X$ with mean $0$ and variance
$1$ and for any $\ell\in\mathbb{N}$ there exists $\varepsilon>0$
such 
\[
D\left(X\Vert\Phi\left(\mu,\sigma^{2}\right)\right)\geq\frac{\mathrm{{E}}\left[H_{2\ell}\left(X\right)\right]^{2}}{2}
\]
 if $\mathrm{{E}}\left[H_{2\ell}\left(X\right)\right]\in\left[-\varepsilon,0\right].$ \end{thm}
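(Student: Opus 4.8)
The plan is to mimic the discretization argument used just above the statement, but now applied to the higher-order result Theorem \ref{lokal} (equivalently Theorem \ref{thm:dlb}) rather than to Theorem \ref{nedregraense}. First I would reduce to a normalized situation: since $D\left(X\Vert\Phi\left(\mu,\sigma^{2}\right)\right)=D\left(aX+b\Vert\Phi\left(a\mu+b,a^{2}\sigma^{2}\right)\right)$ for any real $a,b$, I choose $a$ and $b$ so that the affine image has mean equal to its variance, i.e.\ $a\mu+b=a^{2}\sigma^{2}=:\lambda$. Write $Y=aX+b$, so $Y$ has mean $\lambda$ and $\mathrm{Var}(Y)=a^{2}\sigma^{2}=\lambda$. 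Because $X$ has variance $1$ we are free to let $a\to\infty$, which sends $\lambda\to\infty$; this is the regime in which a Gaussian with matched mean and variance is well approximated by $\mathrm{Po}(\lambda)$, and in which the rescaled lattice $\mathbb{N}_0$ becomes fine relative to the standard deviation $\sqrt{\lambda}$.

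Next I would set up the discretization $D\left(Y\Vert\Phi(\lambda,\lambda)\right)\approx D\left(\lfloor Y\rfloor\,\Vert\,\mathrm{Po}(\lambda)\right)$, with the error tending to $0$ as $\lambda\to\infty$ (this is the same informal step ``$\approx$'' already invoked in the display preceding the theorem; the local CLT / Poisson comparison makes it rigorous, and the monotone scaling in the paragraph on $P_n$ converging to the law of $\alpha Y$ is the discrete analogue we rely on). Then apply Theorem \ref{lokal} with $k=2\ell$ to the integer-valued random variable $\lfloor Y\rfloor$: there is $\varepsilon'>0$ so that, provided $\mathrm{E}\left[C_{2\ell}^{\lambda}\left(\lfloor Y\rfloor\right)\right]\in[-\varepsilon',0]$, we get
\[
D\left(\lfloor Y\rfloor\,\Vert\,\mathrm{Po}(\lambda)\right)\geq\frac{\mathrm{E}\left[C_{2\ell}^{\lambda}\left(\lfloor Y\rfloor\right)\right]^{2}}{2}.
\]
The final ingredient is the asymptotic identification of the Poisson–Charlier moment with the Hermite moment: as $\lambda\to\infty$, the normalized Poisson–Charlier polynomial $C_{m}^{\lambda}(x)$ evaluated at $x=\lambda+\sqrt{\lambda}\,z$ converges to the normalized Hermite polynomial $H_m(z)$. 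Applying this with $x=\lfloor Y\rfloor$ and $z=(X-\mu)/\sigma+o(1)$ (the ``$+o(1)$'' absorbing the floor and the affine normalization), one obtains
\[
\mathrm{E}\left[C_{2\ell}^{\lambda}\left(\lfloor Y\rfloor\right)\right]\longrightarrow\mathrm{E}\left[H_{2\ell}(X)\right],\qquad \lambda\to\infty .
\]
Hence if $\mathrm{E}\left[H_{2\ell}(X)\right]\in[-\varepsilon,0]$ for $\varepsilon$ small enough, then for all large $\lambda$ the Poisson–Charlier moment lies in $[-\varepsilon',0]$ and is close to $\mathrm{E}\left[H_{2\ell}(X)\right]$; combining the three displays and letting $\lambda\to\infty$ yields
\[
D\left(X\Vert\Phi(\mu,\sigma^{2})\right)\geq\frac{\mathrm{E}\left[H_{2\ell}(X)\right]^{2}}{2},
\]
as claimed. (Alternatively, as the paper remarks in the $\ell=1$ case, one can bypass the Poisson detour entirely and run the argument of Lemma \ref{LemmaNedreEpsilon} directly inside the Gaussian exponential family with $H_{2\ell}(X)$ as sufficient statistic, using the analogue of Lemma \ref{LemmaKoklpos} that $\mathrm{E}[H_{2\ell}(Z)^{3}]>0$ for a standard normal $Z$; this gives $G(\mu)\le\mu$ near $0$ and hence the bound. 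I would present the discretization route as the main line and mention this as a remark.)

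The main obstacle is making the two approximations rigorous and uniform: (i) controlling the error in $D\left(Y\Vert\Phi(\lambda,\lambda)\right)-D\left(\lfloor Y\rfloor\,\Vert\,\mathrm{Po}(\lambda)\right)$ as $\lambda\to\infty$ — one needs more than point-wise local CLT bounds, since divergence is sensitive to tails and to small probabilities, so some uniform integrability or a moment assumption on $X$ will be needed to push the error to $0$; and (ii) justifying the interchange of the limit $\lambda\to\infty$ with the expectation in $\mathrm{E}[C_{2\ell}^{\lambda}(\lfloor Y\rfloor)]\to\mathrm{E}[H_{2\ell}(X)]$, i.e.\ controlling the $2\ell$-th moment of the (rescaled, floored) variable uniformly in $\lambda$. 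Both are handled by a standard truncation/dominated-convergence argument once one observes that $X$ has finite moments up to order $2\ell$ (which is implicit in the statement, since $\mathrm{E}[H_{2\ell}(X)]$ must be defined), and that the floor perturbs $C_{2\ell}^{\lambda}$ by $O(\lambda^{-1/2})$ uniformly on the relevant range. Everything else is the routine Taylor/second-derivative bookkeeping already carried out in Lemmas \ref{PeterG}, \ref{LemmaNedreEpsilon} and Theorem \ref{lokal}, which I would cite rather than repeat.
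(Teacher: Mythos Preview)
Your alternative remark is in fact the paper's actual proof, and your main line is not how the paper proceeds. The paper does \emph{not} push the discretization through for Theorem~\ref{Thm14}; the ``$\approx$'' display preceding the theorem is used only heuristically for $\ell=1$, after which the paper notes that the inequality ``can also be proved by a straightforward calculation in the exponential family of Gaussian distributions.'' For general $\ell$ the paper works directly in the Gaussian exponential family with sufficient statistic $H_{2\ell}$: by Lemma~\ref{LemmaNedreEpsilon} the claim reduces to showing $\mathrm{E}_{\Phi}\left[H_{2\ell}(X)^{3}\right]>0$ for a standard normal $X$. This is obtained from Feldheim's product formula
\[
H_{m}(x)H_{n}(x)=\sum_{r=0}^{\min(m,n)} r!\binom{m}{r}\binom{n}{r}H_{m+n-2r}(x),
\]
which gives $(H_{2\ell})^{2}=\sum_{r=0}^{2\ell} r!\binom{2\ell}{r}^{2}H_{4\ell-2r}$ and hence $\mathrm{E}_{\Phi}\left[H_{2\ell}(X)^{3}\right]=\ell!\binom{2\ell}{\ell}^{2}>0$. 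That is the entire proof.

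Your discretization route, as written, has a genuine gap beyond the two approximation issues you flag: the $\varepsilon'$ supplied by Theorem~\ref{lokal} depends on $\lambda$ (it comes from the neighborhood of $0$ on which the variance function of the $\mathrm{Po}(\lambda)$-based exponential family with sufficient statistic $C_{2\ell}^{\lambda}$ is increasing), and you never argue that $\varepsilon'(\lambda)$ stays bounded away from $0$ as $\lambda\to\infty$. Controlling this amounts to showing that the relevant third moment $\mathrm{E}_{\mathrm{Po}(\lambda)}\left[C_{2\ell}^{\lambda}(X)^{3}\right]$ stays uniformly positive in the limit --- which, since $C_{2\ell}^{\lambda}\to H_{2\ell}$ under the Poisson-to-Gaussian scaling, is exactly the Gaussian computation $\mathrm{E}_{\Phi}\left[H_{2\ell}(X)^{3}\right]>0$ you were trying to avoid. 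So the detour buys nothing and adds the two further analytic burdens (the divergence comparison and the moment limit) that the paper's direct argument sidesteps entirely. Promote your parenthetical alternative to the main proof and supply the Feldheim computation; that matches the paper.
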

\begin{proof}
We have to prove that $\mathrm{{E}}\left[\left(H_{2\ell}\left(X\right)\right)^{3}\right]>0.$
The product of two Hermite polynomials can be expanded as
\[
H_{m}\left(x\right)H_{n}\left(x\right)=\sum_{r=0}^{\min\left(m,n\right)}r!\binom{m}{r}\binom{n}{r}H_{m+n-2r}\left(x\right).
\]
This result was proved by Feldheim and later extended \cite{Feldheim1940,Carlitz1962},
but can also be derived by the result of Khokhlov by approximating
a Gaussian distribution by a Poisson distribution. Using this result
we get
\[
\left(H_{2\ell}\left(X\right)\right)^{2}=\sum_{r=0}^{2\ell}r!\binom{2\ell}{r}^{2}H_{4\ell-2r}\left(x\right)
\]
 and
\[
\mathrm{{E}}\left[\left(H_{2\ell}\left(X\right)\right)^{3}\right]=\ell!\binom{2\ell}{\ell}^{2}\geq0.
\]

\end{proof}
Note that the Hermite polynomials of even orders are essentially generalized
Laguerre polynomials so the theorem can be translated into a result
on these polynomials.

If Conjecture \ref{KonjMindre} holds then the condition $\mathrm{{E}}\left[H_{2\ell}\left(X\right)\right]\in\left[-\varepsilon,0\right]$
in Theorem \ref{Thm14} can be replaced by the much weaker condition
$\mathrm{{E}}\left[H_{2\ell}\left(X\right)\right]\leq0.$ The case
$\ell=1$ has been discussed above and the case $\ell=2$ has also
been proved \cite{Harremoes2006b}%
\footnote{Note that \cite{Harremoes2006b} used a different normalization of
the Hermite polynomials.%
}. Assume that $\mathrm{{E}}\left[H_{2\ell}\left(X_{1}\right)\right]$
is the first Hermite moment that is different from zero and that it
is negative. Let $X_{1},X_{2},\dots$ be a sequence of iid random
variables and let $U_{n}$ denote their normalized sum. In \cite{Harremoes2006b}
it was proved that the rate of convergence in the information theoretic
version of the Central Limit Theorem satisfies
\[
\liminf_{n\to\infty}n^{2\ell-2}D\left(U_{n}\Vert\Phi\left(0,1\right)\right)\geq\frac{\left(\mathrm{{E}}\left[H_{2\ell}\left(X_{1}\right)\right]\right)^{2}}{2}
\]
 Theorem \ref{Thm14} implies the stronger result that 
\begin{equation}
n^{2\ell-2}D\left(U_{n}\Vert\Phi\left(0,1\right)\right)\geq\frac{\left(\mathrm{{E}}\left[H_{2\ell}\left(X_{1}\right)\right]\right)^{2}}{2}\label{nedreasymp}
\end{equation}
 holds eventually. We conjecture that the Inequality (\ref{nedreasymp})
holds for all $n,$ and that this lower bound will give the correct
rate of convergence in the information theoretic version of the Central
Limit Theorem under weak regularity conditions .

\section{Acknowledgment}

Lemma \ref{PeterG} was developed in close collaboration with Peter
Gr{\"u}nwald.

\section{Appendix}

The following result is a multinomial version of \emph{Vandermonde's
identity} and is easily proved by induction.
\begin{lem}
The falling factorial satisfies the multinomial expansion, i.e.
\[
\left(\sum_{x=1}^{Y}X_{x}\right)^{\underline{k}}=\sum_{\sum k_{x}=k}\binom{k}{\begin{array}{cccc}
k_{1} & k_{2} & \cdots & k_{Y}\end{array}}{\displaystyle \prod\limits _{x=1}^{Y}}X_{x}^{\underline{k}_{x}}~.
\]

\end{lem}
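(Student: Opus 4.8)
The plan is to prove the identity by induction on the number of summands $Y$, using the classical two--variable Vandermonde identity for falling factorials as the engine. Recall that for real numbers $a,b$ one has
\[
(a+b)^{\underline{k}}=\sum_{j=0}^{k}\binom{k}{j}a^{\underline{j}}b^{\underline{k-j}},
\]
which is simply $k!$ times the polynomial identity $\binom{a+b}{k}=\sum_{j}\binom{a}{j}\binom{b}{k-j}$ obtained by comparing the coefficient of $t^{k}$ on the two sides of $(1+t)^{a+b}=(1+t)^{a}(1+t)^{b}$; alternatively it follows by a one--line induction on $k$ from $(a+b)^{\underline{k+1}}=(a+b)^{\underline{k}}\,(a+b-k)=\bigl((a-j)+(b-k+j)\bigr)$ applied termwise.

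For the base case $Y=1$ both sides reduce to $X_{1}^{\underline{k}}$, since the only composition of $k$ into one part is $k_{1}=k$, carrying multinomial coefficient $1$. For the inductive step, assume the claim for $Y$ summands and put $S=\sum_{x=1}^{Y}X_{x}$, so that $\sum_{x=1}^{Y+1}X_{x}=S+X_{Y+1}$. Applying the two--variable identity and then the inductive hypothesis to $S^{\underline{j}}$ gives
\[
\Bigl(\sum_{x=1}^{Y+1}X_{x}\Bigr)^{\underline{k}}=\sum_{j=0}^{k}\binom{k}{j}X_{Y+1}^{\underline{k-j}}\sum_{\sum_{x\le Y}k_{x}=j}\binom{j}{k_{1},\dots,k_{Y}}\prod_{x=1}^{Y}X_{x}^{\underline{k_{x}}}.
\]
Setting $k_{Y+1}=k-j$, the two nested sums become a single sum over all $(k_{1},\dots,k_{Y+1})$ with $\sum_{x=1}^{Y+1}k_{x}=k$, and the coefficient collapses via
\[
\binom{k}{j}\binom{j}{k_{1},\dots,k_{Y}}=\frac{k!}{(k-j)!\,j!}\cdot\frac{j!}{k_{1}!\cdots k_{Y}!}=\frac{k!}{k_{1}!\cdots k_{Y+1}!}=\binom{k}{k_{1},\dots,k_{Y+1}},
\]
which is exactly the asserted expression for $Y+1$ summands, completing the induction.

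There is essentially no obstacle here: the only point to keep in mind is that the statement is a deterministic polynomial identity in the variables $X_{1},\dots,X_{Y}$, applied in (\ref{Factorial}) conditionally on each realized value of the random index $Y$ (the case $Y=0$ being trivial, since $0^{\underline{k}}=0$ for $k\ge 1$ while both sides equal $1$ for $k=0$). Consequently it is enough to verify it for real arguments, where the two--variable Vandermonde identity is available, and the only real work is the bookkeeping of multinomial coefficients carried out above.
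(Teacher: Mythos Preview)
Your proof is correct and follows exactly the approach the paper indicates: the paper states only that the result ``is easily proved by induction,'' and your induction on the number of summands $Y$, driven by the two-variable Vandermonde identity together with the multinomial-coefficient bookkeeping $\binom{k}{j}\binom{j}{k_{1},\dots,k_{Y}}=\binom{k}{k_{1},\dots,k_{Y+1}}$, is precisely the intended argument.
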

We apply this result on the thinning operation.
\begin{lem}
\label{thm:mombeh} Let $X_{1},X_{2},...$ be a sequence of independent
identically distributed discrete random variables all distributed
like $X.$ Then
\begin{multline*}
\mathrm{{E}}\left[\left(\frac{1}{n}\circ\sum_{j=1}^{n}X_{j}\right)^{\underline{k}}\right]=\\
\left\{ \begin{array}{ll}
\begin{array}{c}
\lambda^{k}\ ,\end{array} & \text{for }k\leq\kappa-1;\\
\begin{array}{c}
\lambda^{k}+\frac{\mathrm{{E}}\left[X^{\underline{\kappa}}\right]-\lambda^{k}}{n^{\kappa-1}}\ ,\end{array}\bigskip & \text{for }k=\kappa~;\\
\begin{array}{c}
\lambda^{\kappa+1}+\frac{\left(n-1\right)\left(\kappa+1\right)\lambda\left(\mathrm{{E}}\left[X^{\underline{\kappa}}\right]-\lambda^{\kappa}\right)}{2n^{\kappa}}\smallskip\\
+\frac{\mathrm{{E}}\left[X^{\underline{\kappa+1}}\right]-\lambda^{\kappa+1}}{n^{\kappa}},
\end{array} & \text{for }k=\kappa+1.
\end{array}\right.
\end{multline*}
\end{lem}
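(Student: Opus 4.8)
The plan is to convert the thinned factorial moment into an ordinary factorial moment of a sum, expand that sum by the multinomial version of Vandermonde's identity stated just above, and then use the defining property of $\kappa$ to collapse the resulting multinomial sum term by term. Throughout one uses that $\mathrm{E}[X]=\lambda$, so that $\mathrm{E}[C_1^\lambda(X)]=0$ and hence $\kappa\geq 2$; this is the regime in which the three displayed formulas make sense.

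First, applying (\ref{Factorial}) with $\alpha=1/n$ and $Y=\sum_{j=1}^n X_j$ gives
\[
\mathrm{E}\Bigl[\bigl(\tfrac1n\circ{\textstyle\sum_{j=1}^n}X_j\bigr)^{\underline k}\Bigr]=n^{-k}\,\mathrm{E}\bigl[({\textstyle\sum_{j=1}^n}X_j)^{\underline k}\bigr],
\]
so it is enough to evaluate $\mathrm{E}[({\textstyle\sum_{j=1}^n}X_j)^{\underline k}]$ and divide by $n^k$. Applying the multinomial Vandermonde identity with $Y=n$, and then using independence and the common distribution of the $X_j$,
\[
\mathrm{E}\bigl[({\textstyle\sum_{j=1}^n}X_j)^{\underline k}\bigr]=\sum_{k_1+\cdots+k_n=k}\binom{k}{k_1,\dots,k_n}\prod_{i=1}^n\mathrm{E}\bigl[X^{\underline{k_i}}\bigr].
\]
By the definition of $\kappa$ we have $\mathrm{E}[X^{\underline j}]=\lambda^j$ for every $0\leq j\leq\kappa-1$, so a tuple $(k_1,\dots,k_n)$ contributes exactly $\lambda^k$ unless some coordinate is at least $\kappa$; and since $k\leq\kappa+1$ while $2\kappa\geq\kappa+2>\kappa+1$, no tuple with $\sum k_i=k$ can have two coordinates $\geq\kappa$. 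Thus the sum splits cleanly according to which single coordinate, if any, is large.

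I would then treat the three cases. For $k\leq\kappa-1$ every tuple is generic and $\sum_{k_1+\cdots+k_n=k}\binom{k}{k_1,\dots,k_n}=n^k$, so the moment is $\lambda^k$. For $k=\kappa$ the only exceptional tuples are the $n$ tuples having one coordinate equal to $\kappa$, each with multinomial coefficient $1$, so the sum equals $(n^\kappa-n)\lambda^\kappa+n\,\mathrm{E}[X^{\underline\kappa}]$, which after dividing by $n^\kappa$ gives $\lambda^\kappa+(\mathrm{E}[X^{\underline\kappa}]-\lambda^\kappa)/n^{\kappa-1}$. For $k=\kappa+1$ there are two families of exceptional tuples: the $n$ tuples with one coordinate equal to $\kappa+1$ (coefficient $1$, value $\mathrm{E}[X^{\underline{\kappa+1}}]$), and the $n(n-1)$ tuples with one coordinate equal to $\kappa$ and one coordinate equal to $1$ (coefficient $\binom{\kappa+1}{\kappa,1}=\kappa+1$, value $\lambda\,\mathrm{E}[X^{\underline\kappa}]$, since $\mathrm{E}[X^{\underline 1}]=\lambda$); the leftover multinomial weight sitting on generic tuples is $n^{\kappa+1}-n-n(n-1)(\kappa+1)$, each of value $\lambda^{\kappa+1}$. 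Adding the three contributions, the pure powers of $\lambda$ combine to $n^{\kappa+1}\lambda^{\kappa+1}$, and dividing by $n^{\kappa+1}$ produces the third formula in the statement.

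The calculation is elementary; the step that needs care is the combinatorial bookkeeping for $k=\kappa+1$ — confirming that the only exceptional tuple shapes are $(\kappa+1,0,\dots,0)$ and $(\kappa,1,0,\dots,0)$ (which is where $\kappa\geq 2$ enters), counting both the number of tuples and the multinomial coefficient of each shape correctly, and checking that the generic weight $n^{\kappa+1}-n-n(n-1)(\kappa+1)$ combines with the two exceptional $\lambda$-contributions so that every $\lambda^{\kappa+1}$ term collapses to $n^{\kappa+1}\lambda^{\kappa+1}$. This is exactly the computation that is invoked, for general $k\leq\kappa+1$, in the proof of Proposition \ref{PropositionPCDecay}.
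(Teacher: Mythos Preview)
Your approach is exactly what the paper sketches: apply equation~(\ref{Factorial}) to reduce to the factorial moment of a sum, expand via the multinomial Vandermonde identity, and exploit $\mathrm{E}[X^{\underline j}]=\lambda^j$ for $j<\kappa$. The bookkeeping you describe for the three cases is correct.

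One point deserves a closer look. Your contributions for $k=\kappa+1$ sum to
\[
n^{\kappa+1}\lambda^{\kappa+1}+n\bigl(\mathrm{E}[X^{\underline{\kappa+1}}]-\lambda^{\kappa+1}\bigr)+n(n-1)(\kappa+1)\lambda\bigl(\mathrm{E}[X^{\underline\kappa}]-\lambda^\kappa\bigr),
\]
and dividing by $n^{\kappa+1}$ gives
\[
\lambda^{\kappa+1}+\frac{(n-1)(\kappa+1)\lambda\bigl(\mathrm{E}[X^{\underline\kappa}]-\lambda^\kappa\bigr)}{n^\kappa}+\frac{\mathrm{E}[X^{\underline{\kappa+1}}]-\lambda^{\kappa+1}}{n^\kappa}.
\]
This differs from the displayed formula in the lemma by the factor $2$ in the denominator of the middle term. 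Your version is the correct one: a direct check (e.g.\ $n=2$, $\kappa=2$) confirms it, and more importantly it is the version that, when fed into the definition of $C_{\kappa+1}^\lambda$, actually yields $\mathrm{E}[C_{\kappa+1}^\lambda(\frac{1}{n}\circ\sum X_j)]=\mathrm{E}[C_{\kappa+1}^\lambda(X)]/n^\kappa$ as asserted in Proposition~\ref{PropositionPCDecay}. So the $2$ in the statement is a typo, and you should not claim that your computation reproduces the printed formula verbatim.
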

\begin{proof}
These equations follow from the Vandermonde Identity for factorials
combined with Equation (\ref{Factorial}). 
\end{proof}

\begin{proof}[Proof of Lemma \ref{Lemma:kvad}]
 For any $n$ satisfying $0\leq n\leq k$ we can use the Vandermonde
Identity for factorials to get
\begin{multline*}
\sum_{\mu=0}^{k}\sum_{\nu=0}^{k}\binom{k}{\mu}\binom{k}{\nu}\mu^{\underline{n}}\nu^{\underline{n}}\left(\mu+\nu-n\right)^{\underline{k}}\left(\textrm{-}1\right)^{\mu+\nu}=\\
\sum_{\substack{a+b\\
+c=k
}
}\binom{k}{a\, b\, c}n^{\underline{c}}\left(\begin{array}{c}
\sum_{\mu=0}^{k}\binom{k}{\mu}\mu^{\underline{n}}\left(\mu-n\right)^{\underline{a}}\left(\textrm{-}1\right)^{\mu}\\
\times\\
\sum_{\nu=0}^{k}\binom{k}{\nu}\nu^{\underline{n}}\left(\nu-n\right)^{\underline{b}}\left(\textrm{-}1\right)^{\nu}
\end{array}\right).
\end{multline*}
 Now 
\begin{align*}
\sum_{\mu=0}^{k} & \binom{k}{\mu}\mu^{\underline{n}}\left(\mu-n\right)^{\underline{a}}\left(\textrm{-}1\right)^{\mu}\\
 & =\sum_{\mu=n+a}^{k}\frac{k^{\underline{n+a}}\left(k-n-a\right)!}{\left(\mu-n-a\right)!\left(k-\mu\right)!}\left(\textrm{-}1\right)^{\mu}\\
 & =k^{\underline{n+a}}\left(\textrm{-}1\right)^{n+a}\sum_{\mu=n+a}^{k}\binom{k-n-a}{\mu-n-a}\left(\textrm{-}1\right)^{\mu-n-a}\\
 & =\left\{ \begin{array}{ll}
0, & \text{for }a\neq k-n;\\
k!\left(\textrm{-}1\right)^{k}, & \text{for }a=k-n.
\end{array}\right.
\end{align*}
 Thus, this sum is only non-zero if $a=k-n.$ Similarly the sum $\sum_{\nu=0}^{k}\binom{k}{\nu}\nu^{\underline{n}}\left(\textrm{-}1\right)^{\nu}\left(\nu-n\right)^{\underline{b}}$
is only non-zero if $b=k-n.$ Thus $c=k-2\left(k-n\right)=2n-k$ and
the condition $c\geq0$ implies that $n\geq\nicefrac{k}{2}.$ Hence
\begin{align*}
 & \sum_{\mu=0}^{k}\sum_{\nu=0}^{k}\binom{k}{\mu}\binom{k}{\nu}\mu^{\underline{n}}\nu^{\underline{n}}\left(\mu+\nu-n\right)^{\underline{k}}\left(-1\right)^{\mu+\nu}\\
 & \quad=\binom{k}{\begin{array}{ccc}
k-n & k-n & 2n-k\end{array}}n^{\underline{2n-k}}k!\left(-1\right)^{k}k!\left(\textrm{-}1\right)^{k}\\
 & \quad=\left(k^{\underline{n}}\right)^{3}n^{\underline{k-n}},
\end{align*}
 which is always non-negative and it is positive if $\nicefrac{k}{2}\leq n\leq k.$ 
\end{proof}

\begin{proof}[Proof of Theorem \ref{nedregraense} Case 1]
 The theorem holds if $\lambda\in\mathbb{N},$ and from now on we
will assume that $\lambda\not\in\mathbb{N}.$ In the interval $\left]\lambda,\lambda+1\right]$
the ceiling $\left\lceil \lambda\right\rceil $ is the only integer
and $C_{2}^{\lambda}\left(x\right)$ is minimal for $x=\left\lceil \lambda\right\rceil .$
Therefore
\begin{multline*}
\frac{\mathrm{d}^{2}Z}{\mathrm{d}\beta^{2}}=\sum_{x=\left\lfloor \lambda\right\rfloor ,\left\lceil \lambda\right\rceil }\mathrm{Po}\left(\lambda,x\right)f\left(C_{2}^{\lambda}\left(x\right)\right)\\
+\sum_{x\neq\left\lfloor \lambda\right\rfloor ,\left\lceil \lambda\right\rceil }\mathrm{Po}\left(\lambda,x\right)f\left(C_{2}^{\lambda}\left(x\right)\right)\\
\leq\sum_{x=\left\lfloor \lambda\right\rfloor ,\left\lceil \lambda\right\rceil }\mathrm{Po}\left(\lambda,x\right)f\left(C_{2}^{\lambda}\left(x\right)\right)+\sum_{x\neq\left\lfloor \lambda\right\rfloor ,\left\lceil \lambda\right\rceil }\mathrm{Po}\left(\lambda,x\right)
\end{multline*}
 so we just have to show that
\[
\sum_{x=\left\lfloor \lambda\right\rfloor ,\left\lceil \lambda\right\rceil }\mathrm{Po}\left(\lambda,x\right)f\left(C_{2}^{\lambda}\left(x\right)\right)\leq\sum_{x=\left\lfloor \lambda\right\rfloor ,\left\lceil \lambda\right\rceil }\mathrm{Po}\left(\lambda,x\right).
\]
After division by $\mathrm{Po}\left(\lambda,\left\lfloor \lambda\right\rfloor \right)$
and $1+\frac{\lambda}{\left\lceil \lambda\right\rceil }$ we see that
it is sufficient to prove that
\[
\frac{\left\lceil \lambda\right\rceil f\left(C_{2}^{\lambda}\left(\left\lfloor \lambda\right\rfloor \right)\right)+\lambda f\left(C_{2}^{\lambda}\left(\left\lceil \lambda\right\rceil \right)\right)}{\left\lceil \lambda\right\rceil +\lambda}\leq1.
\]
At this point the proof splits into the three sub-cases: $\lambda\in\left[0,1\right],\ \lambda\in\left]1,2\right],$
and $\lambda\in\left]2,\infty\right[.$

\textbf{Sub-case }$\lambda<1$ For $\lambda\in\left]0,1\right[$ the
ceiling of $\lambda$ is $1$ and $C_{2}^{\lambda}\left(0\right)=\frac{\lambda}{2^{\nicefrac{1}{2}}}$
and $C_{2}^{\lambda}\left(1\right)=\frac{\lambda-2}{2^{\nicefrac{1}{2}}}.$
We have 
\begin{align*}
 & \frac{f\left(\frac{\lambda}{2^{\nicefrac{1}{2}}}\right)+\lambda f\left(\frac{\lambda-2}{2^{\nicefrac{1}{2}}}\right)}{1+\lambda}\\
 & \qquad=\frac{\lambda^{2}\exp\left(\beta_{0}\frac{\lambda}{2^{\nicefrac{1}{2}}}\right)+\left(\lambda-2\right)^{2}\lambda\exp\left(\beta_{0}\frac{\lambda-2}{2^{1/2}}\right)}{2\left(1+\lambda\right)}\\
 & \qquad=\frac{\lambda^{2}+\left(\lambda-2\right)^{2}\lambda\exp\left(\textrm{-}\beta_{0}2^{1/2}\right)}{2\left(1+\lambda\right)\left(1-\beta_{0}\frac{\lambda}{2^{\nicefrac{1}{2}}}\right)}.
\end{align*}
 As a function of $\lambda\in\left[0,1\right]$ this function is increasing
for $\lambda\leq0.458471$ and decreasing for $\lambda\geq0.455\,6$
and the maximal value is $0.928\,8$, which is less than 1. 

\textbf{Sub-case }$1<\lambda<2$ For $\lambda\in\left]1,2\right[$
the ceiling is 2, and we have
\begin{gather*}
\frac{\left\lceil \lambda\right\rceil f\left(C_{2}^{\lambda}\left(\left\lfloor \lambda\right\rfloor \right)\right)+\lambda f\left(C_{2}^{\lambda}\left(\left\lceil \lambda\right\rceil \right)\right)}{\left\lceil \lambda\right\rceil +\lambda}=\\
\frac{2f\left(C_{2}^{\lambda}\left(1\right)\right)+\lambda f\left(C_{2}^{\lambda}\left(2\right)\right)}{2+\lambda}=\\
\frac{\left(\lambda-2\right)^{2}\exp\left(\beta_{0}\frac{\lambda-2}{2^{\nicefrac{1}{2}}}\right)+\frac{\left(\lambda^{2}-4\lambda+2\right)^{2}}{2\lambda}\exp\left(\beta_{0}\frac{\lambda^{2}-4\lambda+2}{2^{\nicefrac{1}{2}}\lambda}\right)}{2+\lambda}\leq\\
\frac{\left(\lambda-2\right)^{2}\exp\left(\textrm{-}\beta_{0}2^{\nicefrac{1}{2}}\right)+\frac{\left(\lambda^{2}-4\lambda+2\right)^{2}}{2\lambda}\exp\left(\beta_{0}\left(2-2^{\nicefrac{3}{2}}\right)\right)}{2+\lambda}.
\end{gather*}
 For $\lambda\in\left[1,2\right]$ this function is decreasing in
$\lambda,$ with maximum for $\lambda=1$ attaining the value $0.878\,4,$
which is less than 1.

\textbf{Sub-case }$\lambda>2$ For $\lambda>2$ we use that the Poisson
distribution has mode $\left\lfloor \lambda\right\rfloor .$ The minimum
of $C_{2}^{\lambda}\left(X\right)$ as a function of $X$ is $C_{2}^{\lambda}\left(\left\lceil \lambda\right\rceil \right).$
The minimum of the function $\lambda\rightarrow C_{2}^{\lambda}\left(\left\lceil \lambda\right\rceil \right)$
is attained when $\left\lceil \lambda\right\rceil =3.$ We find the
minimum of $C_{2}^{\lambda}\left(3\right)$ to be $\textrm{-}0.7785$
as a function with domain $\mathbb{R}_{+}$ has minimum $\textrm{-}\frac{1+\left(4\lambda\right)^{\textrm{-}1}}{2^{\nicefrac{1}{2}}}$
for $x=\lambda+\nicefrac{1}{2},$ to get 
\begin{multline}
\frac{\left\lceil \lambda\right\rceil f\left(C_{2}^{\lambda}\left(\left\lfloor \lambda\right\rfloor \right)\right)+\lambda f\left(C_{2}^{\lambda}\left(\left\lceil \lambda\right\rceil \right)\right)}{\left\lceil \lambda\right\rceil +\lambda}\\
\leq\frac{1}{2}f\left(\textrm{-}2^{\textrm{-}\nicefrac{1}{2}}\right)+\frac{1}{2}f\left(\textrm{-}0.7785\right)=.9704<1,
\end{multline}
 which proves the theorem in this case.
\end{proof}

\begin{proof}[Proof of Theorem \ref{nedregraense} Case 2]
 This case deals with the situation where $\mathrm{{E}}\left[C_{2}^{\lambda}\left(X\right)\right]<\beta_{0},$
which is only possible if $\lambda\leq4.$

The function $C_{2}^{\lambda}$ satisfies $C_{2}^{\lambda}\left(\lambda\right)=C_{2}^{\lambda}\left(\lambda+1\right)=\textrm{-}2^{\textrm{-}\nicefrac{1}{2}}.$
Convexity of the function $C_{2}^{\lambda}$ and the fact that $X$
can only assume integer values implies that if $\mathrm{{E}}\left[C_{2}^{\lambda}\left(X\right)\right]\leq\textrm{-}2^{\textrm{-}\nicefrac{1}{2}}$
then $\Pr\left(X=\left\lceil \lambda\right\rceil \right)\geq\nicefrac{1}{2}.$

\textbf{Sub-case }$1\leq\lambda\leq4$ For $\lambda\in\left[1,4\right]$
we can get an even better bound on $P\left(X=\left\lceil \lambda\right\rceil \right)$
as follows. 
\begin{multline*}
\mathrm{E}\left[C_{2}^{\lambda}\left(X\right)\right]\geq\Pr\left(X=\left\lceil \lambda\right\rceil \right)C_{2}^{\lambda}\left(\left\lceil \lambda\right\rceil \right)\\
+\left(1-\Pr\left(X=\left\lceil \lambda\right\rceil \right)\right)\min\left\{ C_{2}^{\lambda}\left(\left\lceil \lambda\right\rceil -1\right),C_{2}^{\lambda}\left(\left\lceil \lambda\right\rceil +1\right)\right\} .
\end{multline*}
 Hence $\mathrm{E}\left[C_{2}^{\lambda}\left(X\right)\right]$ can
be lower bounded by an expression of the form 
\begin{equation}
\qquad\left(\frac{1}{2}+t\right)C_{2}^{\lambda}\left(x\right)+\left(\frac{1}{2}-t\right)C_{2}^{\lambda}\left(x+1\right),\label{Eq:Mix}
\end{equation}
 where $s=\Pr\left(X=\left\lceil \lambda\right\rceil \right)$ or
$s=1-\Pr\left(X=\left\lceil \lambda\right\rceil \right)$ and $x=\left\lceil \lambda\right\rceil $
or $x=\left\lceil \lambda\right\rceil -1.$ If $x$ is allowed to
assume real values in (\ref{Eq:Mix}) then it is a polynomial in $x$
of order 2 with minimum
\[
\textrm{-}\frac{1}{2^{\nicefrac{1}{2}}}-\frac{t^{2}}{2^{\nicefrac{1}{2}}\lambda}.
\]
 Hence the condition $\mathrm{{E}}\left[C_{2}^{\lambda}\left(X\right)\right]<\beta_{0}$
implies that
\[
\textrm{-}\frac{1}{2^{\nicefrac{1}{2}}}-\frac{t^{2}}{2^{\nicefrac{1}{2}}\lambda}<\beta_{0}
\]
 and
\[
t>\left(\lambda\left(\textrm{-}\beta_{0}2^{\nicefrac{1}{2}}-1\right)\right)^{\nicefrac{1}{2}}.
\]
 Hence 
\[
\Pr\left(X=\left\lceil \lambda\right\rceil \right)\geq\frac{1}{2}+t>\frac{1}{2}+\left(\lambda\left(\textrm{-}\beta_{0}2^{\nicefrac{1}{2}}-1\right)\right)^{\nicefrac{1}{2}}.
\]
 The maximal point probability of a Poisson random variable with $\lambda\geq1$
is $\exp\left(\textrm{-}1\right)$ implying that
\begin{align*}
 & \left\Vert P-\mathrm{Po}\left(\lambda\right)\right\Vert \\
 & \qquad\geq2\left(\Pr\left(X=\left\lceil \lambda\right\rceil \right)-\mathrm{Po}\left(\lambda,\left\lceil \lambda\right\rceil \right)\right)\\
 & \qquad\geq2\left(\frac{1}{2}+\left(\lambda\left(\textrm{-}\beta_{0}2^{\nicefrac{1}{2}}-1\right)\right)^{\nicefrac{1}{2}}-\exp\left(\textrm{-}1\right)\right).
\end{align*}
 Therefore by (\ref{eqn:total}) it is sufficient to prove that
\begin{equation}
2\left(\frac{1}{2}+\left(\lambda\left(\textrm{-}\beta_{0}2^{\nicefrac{1}{2}}-1\right)\right)^{\nicefrac{1}{2}}-\exp\left(\textrm{-}1\right)\right)\geq\left\vert C_{2}^{\lambda}\left(\left\lceil \lambda\right\rceil \right)\right\vert .\label{Ineq14}
\end{equation}
This inequality is checked numerically and illustrated in Figure \ref{fig:14}.

\begin{figure}
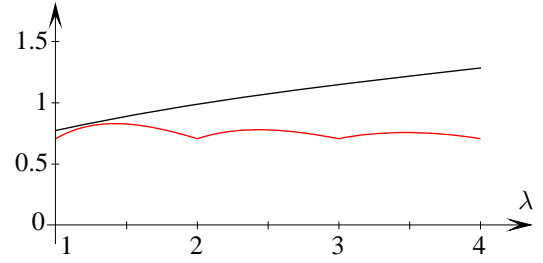

\begin{centering}
\begin{pgfpicture}{31.41mm}{34.11mm}{104.50mm}{70.31mm} 
\pgfsetxvec{\pgfpoint{1.00mm}{0mm}} 
\pgfsetyvec{\pgfpoint{0mm}{1.00mm}} 
\color[rgb]{0,0,0}\pgfsetlinewidth{0.30mm}\pgfsetdash{}{0mm} 
\pgfsetlinewidth{0.12mm}\pgfmoveto{\pgfxy(36.36,36.37)}\pgflineto{\pgfxy(36.36,68.31)}\pgfstroke \pgfmoveto{\pgfxy(36.36,68.31)}\pgflineto{\pgfxy(35.66,65.51)}\pgflineto{\pgfxy(36.36,66.91)}\pgflineto{\pgfxy(37.06,65.51)}\pgflineto{\pgfxy(36.36,68.31)}\pgfclosepath\pgffill \pgfmoveto{\pgfxy(36.36,68.31)}\pgflineto{\pgfxy(35.66,65.51)}\pgflineto{\pgfxy(36.36,66.91)}\pgflineto{\pgfxy(37.06,65.51)}\pgflineto{\pgfxy(36.36,68.31)}\pgfclosepath\pgfstroke 
\pgfmoveto{\pgfxy(35.22,38.92)}\pgflineto{\pgfxy(99.58,38.92)}\pgfstroke
\pgfmoveto{\pgfxy(99.58,38.92)}\pgflineto{\pgfxy(96.78,39.62)}\pgflineto{\pgfxy(98.18,38.92)}\pgflineto{\pgfxy(96.78,38.22)}\pgflineto{\pgfxy(99.58,38.92)}\pgfclosepath\pgffill \pgfmoveto{\pgfxy(99.58,38.92)}\pgflineto{\pgfxy(96.78,39.62)}\pgflineto{\pgfxy(98.18,38.92)}\pgflineto{\pgfxy(96.78,38.22)}\pgflineto{\pgfxy(99.58,38.92)}\pgfclosepath\pgfstroke  
\pgfmoveto{\pgfxy(92.87,39.38)}\pgflineto{\pgfxy(92.87,38.33)}\pgfstroke 
\pgfmoveto{\pgfxy(83.46,39.38)}\pgflineto{\pgfxy(83.46,38.33)}\pgfstroke  
\pgfmoveto{\pgfxy(74.05,39.38)}\pgflineto{\pgfxy(74.05,38.33)}\pgfstroke 
\pgfmoveto{\pgfxy(64.64,39.38)}\pgflineto{\pgfxy(64.64,38.33)}\pgfstroke
\pgfputat{\pgfxy(36.98,35.00)}{\pgfbox[bottom,left]{1}}
\pgfputat{\pgfxy(54.23,35.00)}{\pgfbox[bottom,left]{2}}
\pgfputat{\pgfxy(73.05,35.00)}{\pgfbox[bottom,left]{3}}
\pgfputat{\pgfxy(91.87,35.00)}{\pgfbox[bottom,left]{4}}
\pgfmoveto{\pgfxy(55.23,39.38)}\pgflineto{\pgfxy(55.23,38.33)}\pgfstroke
\pgfmoveto{\pgfxy(45.83,39.38)}\pgflineto{\pgfxy(45.83,38.33)}\pgfstroke
\pgfputat{\pgfxy(33.24,38.21)}{\pgfbox[bottom,left]{0}}
\pgfputat{\pgfxy(31.00,46.08)}{\pgfbox[bottom,left]{0.5}}
\pgfputat{\pgfxy(33.67,54.24)}{\pgfbox[bottom,left]{1}}
\pgfputat{\pgfxy(31.00,62.39)}{\pgfbox[bottom,left]{1.5}} 
\pgfmoveto{\pgfxy(36.88,63.32)}\pgflineto{\pgfxy(35.84,63.32)}\pgfstroke 
\pgfmoveto{\pgfxy(36.88,55.16)}\pgflineto{\pgfxy(35.84,55.16)}\pgfstroke  
\pgfmoveto{\pgfxy(36.88,47.01)}\pgflineto{\pgfxy(35.84,47.01)}\pgfstroke 
\color[rgb]{1,0,0}\pgfsetlinewidth{0.18mm}\pgfmoveto{\pgfxy(55.28,50.39)}\pgfcurveto{\pgfxy(55.23,50.41)}{\pgfxy(54.90,50.49)}{\pgfxy(54.86,50.50)}\pgfcurveto{\pgfxy(48.96,52.25)}{\pgfxy(42.03,53.75)}{\pgfxy(36.41,50.39)}\pgfstroke
\pgfmoveto{\pgfxy(55.23,50.39)}\pgfcurveto{\pgfxy(61.29,52.39)}{\pgfxy(67.93,51.53)}{\pgfxy(74.05,50.39)}\pgfstroke
\pgfmoveto{\pgfxy(74.05,50.39)}\pgfcurveto{\pgfxy(76.79,50.99)}{\pgfxy(79.62,51.14)}{\pgfxy(82.42,51.22)}\pgfcurveto{\pgfxy(85.91,51.22)}{\pgfxy(89.42,50.96)}{\pgfxy(92.87,50.39)}\pgfstroke 
\color[rgb]{0,0,0}\pgfmoveto{\pgfxy(36.41,51.48)}\pgfcurveto{\pgfxy(55.03,55.54)}{\pgfxy(74.04,57.64)}{\pgfxy(92.95,59.81)}\pgfstroke 
\pgfputat{\pgfxy(97.89,41.01)}{\pgfbox[bottom,left]{$\lambda$}} 
\end{pgfpicture}
\par\end{centering}

\protect\caption{\label{fig:14}  The left and the right hand side of the Inequality
(\ref{Ineq14}).}

\end{figure}

\textbf{Sub-case }$\mathbf{0}\leq\lambda<\nicefrac{1}{2}$ For $\lambda\in\left[0,\nicefrac{1}{2}\right]$
we have 
\begin{align*}
\mathrm{{E}}\left[C_{2}^{\lambda}\left(X\right)\right] & \geq\Pr\left(1\right)C_{2}^{\lambda}\left(1\right)+\left(1-\Pr\left(1\right)\right)C_{2}^{\lambda}\left(0\right)\\
 & =\Pr\left(1\right)\frac{\textrm{-}2+\lambda}{2^{\nicefrac{1}{2}}}+\left(1-\Pr\left(1\right)\right)\frac{\lambda}{2^{\nicefrac{1}{2}}}\\
 & =\frac{\lambda-2\Pr\left(1\right)}{2^{\nicefrac{1}{2}}}.
\end{align*}
 Therefore by (\ref{eqn:total}) it is sufficient to prove that
\[
\frac{2\Pr\left(1\right)-\lambda}{2^{\nicefrac{1}{2}}}\leq2\left(\Pr\left(1\right)-\lambda\exp\left(\textrm{-}\lambda\right)\right),
\]
 which is equivalent to
\[
2^{\nicefrac{1}{2}}\lambda\exp\left(\textrm{-}\lambda\right)-\frac{\lambda}{2}\leq\left(2^{\nicefrac{1}{2}}-1\right)\Pr\left(1\right).
\]
 We know that $P\left(1\right)\geq\nicefrac{1}{2}$ so it is sufficient
to prove that 
\begin{equation}
2^{\nicefrac{1}{2}}\lambda\exp\left(\textrm{-}\lambda\right)-\frac{\lambda}{2}\leq\frac{2^{\nicefrac{1}{2}}-1}{2}.\label{Ineq0halv}
\end{equation}
 This inequality is checked numerically and illustrated in Figure
\ref{fig:lambda0halv}.

\begin{figure}
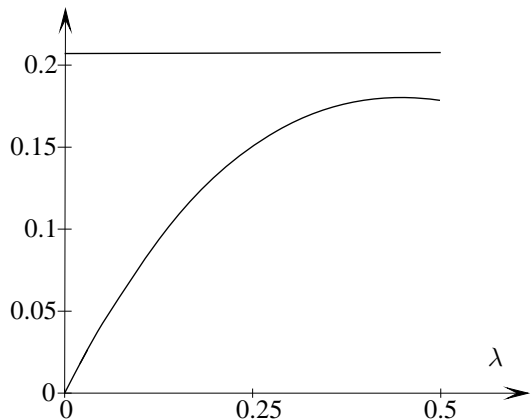

\begin{centering}
\begin{pgfpicture}{3.49mm}{14.00mm}{90.15mm}{72.55mm} 
\pgfsetxvec{\pgfpoint{1.00mm}{0mm}} 
\pgfsetyvec{\pgfpoint{0mm}{1.00mm}} 
\color[rgb]{0,0,0}\pgfsetlinewidth{0.30mm}\pgfsetdash{}{0mm} 
\pgfputat{\pgfxy(5.49,65.72)}{\pgfbox[bottom,left]{\fontsize{0.16}{0.19}\selectfont }} 
\pgfsetlinewidth{0.12mm}\pgfmoveto{\pgfxy(20.85,19.40)}\pgflineto{\pgfxy(20.96,70.55)}\pgfstroke \pgfmoveto{\pgfxy(20.96,70.55)}\pgflineto{\pgfxy(20.25,67.75)}\pgflineto{\pgfxy(20.95,69.15)}\pgflineto{\pgfxy(21.65,67.75)}\pgflineto{\pgfxy(20.96,70.55)}\pgfclosepath\pgffill \pgfmoveto{\pgfxy(20.96,70.55)}\pgflineto{\pgfxy(20.25,67.75)}\pgflineto{\pgfxy(20.95,69.15)}\pgflineto{\pgfxy(21.65,67.75)}\pgflineto{\pgfxy(20.96,70.55)}\pgfclosepath\pgfstroke 
\pgfmoveto{\pgfxy(20.76,19.45)}\pgflineto{\pgfxy(82.47,19.44)}\pgfstroke
\pgfmoveto{\pgfxy(82.47,19.44)}\pgflineto{\pgfxy(79.67,20.14)}\pgflineto{\pgfxy(81.07,19.44)}\pgflineto{\pgfxy(79.67,18.74)}\pgflineto{\pgfxy(82.47,19.44)}\pgfclosepath\pgffill \pgfmoveto{\pgfxy(82.47,19.44)}\pgflineto{\pgfxy(79.67,20.14)}\pgflineto{\pgfxy(81.07,19.44)}\pgflineto{\pgfxy(79.67,18.74)}\pgflineto{\pgfxy(82.47,19.44)}\pgfclosepath\pgfstroke 
\pgfmoveto{\pgfxy(70.85,20.06)}\pgflineto{\pgfxy(70.85,18.83)}\pgfstroke
\pgfputat{\pgfxy(20.20,16.10)}{\pgfbox[bottom,left]{0}}
\pgfputat{\pgfxy(43.64,16.10)}{\pgfbox[bottom,left]{0.25}}
\pgfputat{\pgfxy(68.65,16.10)}{\pgfbox[bottom,left]{0.5}}
\pgfmoveto{\pgfxy(45.84,20.06)}\pgflineto{\pgfxy(45.84,18.83)}\pgfstroke  
\pgfmoveto{\pgfxy(20.85,20.06)}\pgflineto{\pgfxy(20.85,18.83)}\pgfstroke
\pgfmoveto{\pgfxy(21.78,63.09)}\pgflineto{\pgfxy(19.92,63.09)}\pgfstroke 
\pgfputat{\pgfxy(17.85,18.35)}{\pgfbox[bottom,left]{0}}
\pgfputat{\pgfxy(13.65,29.25)}{\pgfbox[bottom,left]{0.05}}
\pgfputat{\pgfxy(15.50,40.17)}{\pgfbox[bottom,left]{0.1}}
\pgfputat{\pgfxy(13.65,51.07)}{\pgfbox[bottom,left]{0.15}} 
\pgfputat{\pgfxy(15.50,61.99)}{\pgfbox[bottom,left]{0.2}}
\pgfmoveto{\pgfxy(21.78,52.17)}\pgflineto{\pgfxy(19.92,52.17)}\pgfstroke 
\pgfmoveto{\pgfxy(21.78,41.27)}\pgflineto{\pgfxy(19.92,41.27)}\pgfstroke 
\pgfmoveto{\pgfxy(21.78,30.35)}\pgflineto{\pgfxy(19.92,30.35)}\pgfstroke 
\pgfmoveto{\pgfxy(21.78,19.45)}\pgflineto{\pgfxy(19.92,19.45)}\pgfstroke 
\pgfsetlinewidth{0.18mm}\pgfmoveto{\pgfxy(22.87,23.34)}\pgflineto{\pgfxy(23.89,25.21)}\pgfstroke 
\pgfmoveto{\pgfxy(20.85,19.45)}\pgfcurveto{\pgfxy(22.46,22.56)}{\pgfxy(24.04,25.70)}{\pgfxy(25.90,28.75)}\pgfcurveto{\pgfxy(31.78,37.81)}{\pgfxy(37.60,47.74)}{\pgfxy(49.13,54.37)}\pgfcurveto{\pgfxy(56.25,58.45)}{\pgfxy(63.65,59.43)}{\pgfxy(70.75,58.40)}\pgfstroke \pgfmoveto{\pgfxy(70.85,64.76)}\pgflineto{\pgfxy(20.85,64.63)}\pgfstroke 
\pgfputat{\pgfxy(77.11,23.04)}{\pgfbox[bottom,left]{$\lambda$}} 
\end{pgfpicture}%
\par\end{centering}

\protect\caption{\label{fig:lambda0halv}Plot of the function $2^{\nicefrac{1}{2}}\lambda\exp\left(-\lambda\right)-\nicefrac{\lambda}{2}$
and the constant function $\frac{2^{\nicefrac{1}{2}}-1}{2}$ .}
\end{figure}

\textbf{Sub-case }$\nicefrac{1}{2}\leq\lambda<1$ For $\lambda\in\left[\nicefrac{1}{2},1\right]$
we have 
\begin{align*}
\mathrm{{E}}\left[C_{2}^{\lambda}\left(X\right)\right] & \geq\Pr\left(1\right)C_{2}^{\lambda}\left(1\right)+\left(1-\Pr\left(1\right)\right)C_{2}^{\lambda}\left(2\right)\\
 & =\Pr\left(1\right)\frac{-2+\lambda}{2^{\nicefrac{1}{2}}}+\left(1-\Pr\left(1\right)\right)\frac{\lambda^{2}-4\lambda+2}{2^{\nicefrac{1}{2}}\lambda}\\
 & =\frac{\lambda^{2}-4\lambda+2}{2^{\nicefrac{1}{2}}\lambda}+\Pr\left(1\right)\frac{2\lambda-2}{2^{\nicefrac{1}{2}}\lambda}.
\end{align*}
 The inequality $\mathrm{{E}}\left[C_{2}^{\lambda}\left(X\right)\right]\leq\beta_{0}$
implies that
\[
\frac{\lambda^{2}-4\lambda+2}{2^{\nicefrac{1}{2}}\lambda}+\Pr\left(1\right)\frac{2\lambda-2}{2^{\nicefrac{1}{2}}\lambda}\leq\beta_{0}
\]
 and
\[
\Pr\left(1\right)\geq\frac{\beta_{0}2^{\nicefrac{1}{2}}\lambda-\lambda^{2}+4\lambda-2}{2\lambda-2}\,.
\]
 Therefore by (\ref{eqn:total}) it is sufficient to prove that
\[
\frac{\lambda^{2}-4\lambda+2}{2^{\nicefrac{1}{2}}\lambda}+\Pr\left(1\right)\frac{2\lambda-2}{2^{\nicefrac{1}{2}}\lambda}\leq2\left(\Pr\left(1\right)-\lambda\exp\left(\textrm{-}\lambda\right)\right),
\]
 which is equivalent to
\[
\frac{\lambda^{2}-4\lambda+2}{2^{\nicefrac{1}{2}}\lambda}+2\lambda\exp\left(\textrm{-}\lambda\right)\leq\left(2-\frac{2\lambda-2}{2^{\nicefrac{1}{2}}\lambda}\right)\Pr\left(1\right).
\]
 or 
\begin{multline}
\frac{\lambda^{2}-4\lambda+2}{2^{\nicefrac{1}{2}}\lambda}+2\lambda\exp\left(\textrm{-}\lambda\right)\\
\leq\left(2-\frac{2\lambda-2}{2^{\nicefrac{1}{2}}\lambda}\right)\frac{\beta_{0}2^{\nicefrac{1}{2}}\lambda-\lambda^{2}+4\lambda-2}{2\lambda-2}.\label{Ineqhalv1}
\end{multline}
This inequality is checked numerically and the validity is illustrated
in Figure \ref{fig:halv1}.

\begin{figure}
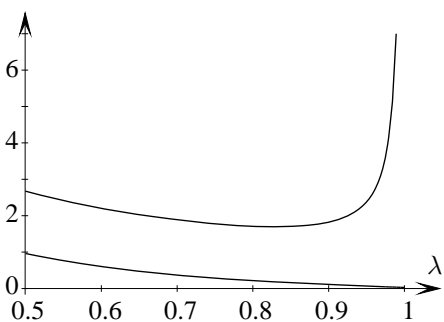

\begin{centering}
\begin{pgfpicture}{28.96mm}{18.91mm}{94.88mm}{61.90mm} 
\pgfsetxvec{\pgfpoint{1.00mm}{0mm}} 
\pgfsetyvec{\pgfpoint{0mm}{1.00mm}} 
\color[rgb]{0,0,0}\pgfsetlinewidth{0.30mm}\pgfsetdash{}{0mm} 
\color[rgb]{1,1,1}\pgfmoveto{\pgfxy(30.96,56.07)}\pgflineto{\pgfxy(33.81,56.07)}\pgflineto{\pgfxy(33.81,57.96)}\pgflineto{\pgfxy(30.96,57.96)}\pgfclosepath\pgffill \color[rgb]{0,0,0}
\pgfputat{\pgfxy(30.96,57.91)}{\pgfbox[bottom,left]{\fontsize{0.08}{0.09}\selectfont }} 
\pgfsetlinewidth{0.12mm}\pgfmoveto{\pgfxy(35.63,22.42)}\pgflineto{\pgfxy(35.63,59.90)}\pgfstroke
\pgfmoveto{\pgfxy(35.63,59.90)}\pgflineto{\pgfxy(34.93,57.10)}\pgflineto{\pgfxy(35.63,58.50)}\pgflineto{\pgfxy(36.33,57.10)}\pgflineto{\pgfxy(35.63,59.90)}\pgfclosepath\pgffill \pgfmoveto{\pgfxy(35.63,59.90)}\pgflineto{\pgfxy(34.93,57.10)}\pgflineto{\pgfxy(35.63,58.50)}\pgflineto{\pgfxy(36.33,57.10)}\pgflineto{\pgfxy(35.63,59.90)}\pgfclosepath\pgfstroke 
\pgfmoveto{\pgfxy(35.59,23.23)}\pgflineto{\pgfxy(90.75,23.23)}\pgfstroke 
\pgfmoveto{\pgfxy(90.75,23.23)}\pgflineto{\pgfxy(87.95,23.93)}\pgflineto{\pgfxy(89.35,23.23)}\pgflineto{\pgfxy(87.95,22.53)}\pgflineto{\pgfxy(90.75,23.23)}\pgfclosepath\pgffill \pgfmoveto{\pgfxy(90.75,23.23)}\pgflineto{\pgfxy(87.95,23.93)}\pgflineto{\pgfxy(89.35,23.23)}\pgflineto{\pgfxy(87.95,22.53)}\pgflineto{\pgfxy(90.75,23.23)}\pgfclosepath\pgfstroke 
\pgfputat{\pgfxy(85.76,19.20)}{\pgfbox[bottom,left]{1}} 
\pgfmoveto{\pgfxy(86.05,23.67)}\pgflineto{\pgfxy(86.05,22.79)}\pgfstroke 
\pgfputat{\pgfxy(74.17,19.20)}{\pgfbox[bottom,left]{0.9}} 
\pgfmoveto{\pgfxy(75.98,23.67)}\pgflineto{\pgfxy(75.98,22.79)}\pgfstroke
\pgfputat{\pgfxy(64.09,19.20)}{\pgfbox[bottom,left]{0.8}} 
\pgfmoveto{\pgfxy(65.90,23.67)}\pgflineto{\pgfxy(65.90,22.79)}\pgfstroke 
\pgfputat{\pgfxy(54.10,19.20)}{\pgfbox[bottom,left]{0.7}} 
\pgfmoveto{\pgfxy(55.84,23.67)}\pgflineto{\pgfxy(55.84,22.79)}\pgfstroke 
\pgfputat{\pgfxy(44.10,19.20)}{\pgfbox[bottom,left]{0.6}} 
\pgfmoveto{\pgfxy(45.76,23.67)}\pgflineto{\pgfxy(45.76,22.79)}\pgfstroke 
\pgfputat{\pgfxy(33.81,19.20)}{\pgfbox[bottom,left]{0.5}} 
\pgfmoveto{\pgfxy(36.03,57.13)}\pgflineto{\pgfxy(35.15,57.13)}\pgfstroke 
\pgfputat{\pgfxy(33.00,51.43)}{\pgfbox[bottom,left]{6}} 
\pgfmoveto{\pgfxy(36.03,52.29)}\pgflineto{\pgfxy(35.15,52.29)}\pgfstroke 
\pgfmoveto{\pgfxy(36.03,47.45)}\pgflineto{\pgfxy(35.15,47.45)}\pgfstroke 
\pgfputat{\pgfxy(33.00,41.75)}{\pgfbox[bottom,left]{4}}
 \pgfmoveto{\pgfxy(36.03,42.61)}\pgflineto{\pgfxy(35.15,42.61)}\pgfstroke 
\pgfmoveto{\pgfxy(36.03,37.76)}\pgflineto{\pgfxy(35.15,37.76)}\pgfstroke 
\pgfputat{\pgfxy(33.00,32.05)}{\pgfbox[bottom,left]{2}} 
\pgfmoveto{\pgfxy(36.03,32.92)}\pgflineto{\pgfxy(35.15,32.92)}\pgfstroke 
 \pgfmoveto{\pgfxy(36.03,28.08)}\pgflineto{\pgfxy(35.15,28.08)}\pgfstroke 
\pgfputat{\pgfxy(33.00,22.52)}{\pgfbox[bottom,left]{0}} 
\pgfmoveto{\pgfxy(36.03,23.23)}\pgflineto{\pgfxy(35.15,23.23)}\pgfstroke 
\pgfsetlinewidth{0.18mm}\pgfmoveto{\pgfxy(85.54,23.39)}\pgflineto{\pgfxy(86.05,23.37)}\pgfstroke 
\pgfmoveto{\pgfxy(85.03,23.41)}\pgflineto{\pgfxy(85.54,23.39)}\pgfstroke \pgfmoveto{\pgfxy(84.52,23.43)}\pgflineto{\pgfxy(85.03,23.41)}\pgfstroke 
\pgfmoveto{\pgfxy(84.01,23.44)}\pgflineto{\pgfxy(84.52,23.43)}\pgfstroke \pgfmoveto{\pgfxy(83.50,23.46)}\pgflineto{\pgfxy(84.01,23.44)}\pgfstroke 
\pgfmoveto{\pgfxy(35.69,27.89)}\pgfcurveto{\pgfxy(51.38,24.64)}{\pgfxy(67.54,24.14)}{\pgfxy(83.50,23.46)}\pgfstroke \pgfmoveto{\pgfxy(35.65,36.19)}\pgfcurveto{\pgfxy(41.21,34.70)}{\pgfxy(46.87,33.57)}{\pgfxy(52.58,32.81)}\pgfcurveto{\pgfxy(59.13,31.94)}{\pgfxy(65.88,31.12)}{\pgfxy(72.50,31.59)}\pgfcurveto{\pgfxy(76.15,31.84)}{\pgfxy(80.06,32.62)}{\pgfxy(81.97,36.11)}\pgfcurveto{\pgfxy(83.16,38.28)}{\pgfxy(83.62,40.90)}{\pgfxy(83.96,43.35)}\pgfcurveto{\pgfxy(84.09,44.53)}{\pgfxy(84.34,46.89)}{\pgfxy(84.46,48.07)}\pgfcurveto{\pgfxy(84.59,50.34)}{\pgfxy(84.83,54.87)}{\pgfxy(84.96,57.13)}\pgfstroke \pgfputat{\pgfxy(89.06,25.21)}{\pgfbox[bottom,left]{$\lambda$}} 
\end{pgfpicture} 
\par\end{centering}

\protect\caption{\label{fig:halv1}Left and right side for $\lambda$ between $\nicefrac{1}{2}$
and 1.}

\end{figure}
\end{proof}


\end{document}